\documentclass[11pt]{amsart}
\usepackage{mathrsfs}
\usepackage{amsfonts}
\usepackage{latexsym,amsmath,amssymb}
\usepackage[colorlinks=true,linkcolor=blue]{hyperref}
\usepackage{indentfirst}


 \textwidth 5.5 true in
\oddsidemargin 0.35 true in

\evensidemargin 0.35 true in

\setcounter{section}{0}

\pagestyle{myheadings} \footskip=50pt

 \renewcommand{\epsilon}{\varepsilon}

\newtheorem{theorem}{Theorem}[section]

 \newtheorem{lemma}[theorem]{Lemma}
 \newtheorem{remark}[theorem]{Remark}
 
 \newtheorem{problem}{Problem}
 \newtheorem{Corollary}[theorem]{Corollary}
 \newtheorem{proposition}[theorem]{proposition}
 
\newtheorem{deff}[theorem]{Definition}
 
 \newcommand{\bth}{\begin{theorem}}
 \newcommand{\ble}{\begin{lemma}}
 \newcommand{\bcor}{\begin{corr}}
 \newcommand{\bdeff}{\begin{deff}}
 \newcommand{\bprop}{\begin{proposition}}
 \newcommand{\ele}{\end{lemma}}
 \newcommand{\ecor}{\end{corr}}
 \newcommand{\edeff}{\end{deff}}
 
 \newcommand{\eprop}{\end{proposition}}

 \renewcommand{\Pi}{\varPi}

 \renewcommand{\epsilon}{\varepsilon}

  \newcommand{\R}{{\mathbb R}}

\numberwithin{equation}{section}

\pagestyle{plain}

\thanks{The first author was supported by National Science Foundation of China(No.12101145) and Guangxi Science Technology Project (No. GuikeAD22035202).}

\title
[Nonlinear second boundary conditions ]{The constant mean curvature hypersurfaces
with prescribed gradient image}
\author{Rongli Huang}
\address{School of Mathematics and Statistics, Guangxi Normal University,
Guilin, Guangxi 541004, People's Republic of China,
 E-mail: ronglihuangmath@gxnu.edu.cn}
\author{Dayan Wei}
\address{School of Mathematics and Statistics, Guangxi Normal University,
Guilin, Guangxi 541004, People's Republic of China,
 E-mail: dayanweimath@gxnu.edu.cn}

\author{Yunhua Ye}
\address{School of Mathematics, Jiaying University,
Meizhou, Guangdong 514015, People's Republic of China,
E-mail: mathyhye@163.com}
\date{}

\begin{document}
\begin{abstract}
In this paper, we consider the existence of constant mean curvature hypersurfaces
 with prescribed gradient image.  Let $\Omega$ and $\tilde{\Omega}$ be uniformly convex
 bounded domains in $\mathbb{R}^n$ with smooth boundary. We show that there exists unique convex solutions  for
 the second boundary value problem of constant mean curvature equations.
\end{abstract}
\maketitle

\let\thefootnote\relax\footnote{
2010 \textit{Mathematics Subject Classification}. Primary 53C44; Secondary 53A10.

\textit{Keywords and phrases}. Gauss map; Legendre transform; Invertible.}

\section{Introduction}
 Let $\mathbb{R}^{n,1}$ be the Minkowski space with the Lorentzian metric
 \begin{equation}\label{e1.1.1}
     ds^2=\sum^n_{i=1}dx_i^2-dx_{n+1}^2.
 \end{equation}

We consider convex spacelike hypersurfaces with constant mean curvature in Minkowski space $\mathbb{R}^{n,1}$.
Any such hypersurface can be written locally as a graph of a function $x_{n+1} = u(x)$, $x\in \mathbb{R}^n$, satisfying the spacelike condition
\begin{align}
    |Du|<1.
\end{align}

Here the constant mean curvature equation can be written as
\begin{equation}\label{e1.1.3}
    \mathrm{div}\left(\frac{Du}{\sqrt{1-|Du|^2}}\right)=c,  \quad x\in \Omega,
\end{equation}
in conjunction with the so-called second boundary value problem
\begin{equation}\label{e1.1.4}
    Du(\Omega)=\tilde{\Omega},
\end{equation}
where $c$ is a constant to be prescribed below,  and $\Omega$, $\tilde{\Omega}$ are uniformly convex bounded domains with smooth boundary in $\mathbb{R}^n$. The boundary condition $(\ref{e1.1.4})$ is natural for mean curvature equations of the form $(\ref{e1.1.3})$ because these equations are elliptic precisely on locally uniformly convex solutions, and the gradient map $Du$ is then a diffeomorphism of $\Omega$ onto its image $Du(\Omega) \subset {B}_1(0)$, where $B_1(0)$ is the unit ball in $\mathbb{R}^n$
with the Klein model of the hyperbolic geometry $\{(x,1)\in \mathbb{R}^{n,1}, |x|<1\}$.

The problem of fully nonlinear partial differential equations with second boundary value conditions have been studied  for a long time. Urbas \cite{Ju1}-\cite{Ju2} studied
the existence of globally smooth solutions to Monge-Amp\`{e}re type and a class of Hessian equations subject to the second boundary condition. Nessi-Gregory \cite{Nessi}
found the existence of globally smooth classical solutions of a new class of modified-Hessian equations, closely related to the Optimal Transportation Equation which satisfying the second boundary value problem. Shibing Chen etal \cite{Chen1} established the global $C^{2,\alpha}$ and $W^{2,p}$ regularity for the Monge-Amp\`{e}re equation subject to second boundary condition. Later, Savin etal \cite{Savin} studied the global regularity of $W^{2,1+\epsilon}$ estimates for Monge-Amp\`{e}re equation subject to second boundary condition and obtain the optimality of the exponent $1+\epsilon$.

In recent years, the curvature equations arise from geometry problems with second boundary value condition have aroused widespread interest among researchers.
Urbas established the existence of Weingarten hypersurfaces with prescribed gradient image in \cite{J2} and constructed a smooth pseudoconvex pair $(D_1,D_2)$
of domains in $\mathbb{R}^2$ with equal areas such that there is no globally smooth minimal Lagrangian diffeomorphism from $\overline{D_1}$ onto $\overline{D_2}$ in \cite{Ju3}.
Brendle and Warren \cite{SM} proved that there exists a diffeomorphism $f: \Omega \rightarrow$ $\tilde{\Omega}$ such that the graph $\Sigma=\{(x, f(x)): x \in \Omega\}\subset \mathbb{R}^n \times \mathbb{R}^n$ is a minimal Lagrangian submanifold  if $\Omega$ and $\tilde{\Omega}$ are uniformly convex. In \cite{Xin1,Xin2}, Xin  derived  Gauss curvature estimates for the $n$-graph $S \subset \mathbb{R}^{m+n}$ with prescribed mean curvature and proved the evolution equations of mean curvature flow have a long time smooth solution.
In a series works of \cite{HR2}-\cite{RS}, The first author and his coauthors considered the second boundary value problem for a class of Lagrangian mean curvature equation
by using elliptic or parabolic methods.

There is also a lot of literature on the study of spacelike hypersurfaces in Minkowski space.
Treibergs considered an entire spacelike hypersurface in Minkowski space may be globally represented as $x_{n+1}=f(z)$ with $z \in \mathbf{R}^n$ and the gradient of $f$ smaller than 1, obtained the set of entire constant mean curvature spacelike hypersurfaces may be identified with the set $Q$ of boundary cones in \cite{TAE}. He showed that for any $f\in C^2(\mathbb{S}^{n-1})$, there is a spacelike, convex, constant mean curvature hypersurface $\mathcal{M}_u = \{(x, u(x))|x\in \mathbb{R}^n \}$ with bounded principal curvatures, such that as $|x|\to\infty, u(x)\to |x|+ f\left(\frac{x}{|x|}\right)$.
After this, Treibergs and Choi proved the Gauss map of a spacelike constant mean curvature hypersurface of Minkowski space is a harmonic map to hyperbolic space in \cite{Choi}.
Then in \cite{Li}, Li extended Treibergs' results \cite{TAE} and proved the existence of constant Gauss curvature hypersurfaces with Gauss image a unit ball.
Later,  Bayard \cite{Bayard}  proved the existence of entire spacelike hypersurfaces of prescribed negative scalar curvature in Minkowski space, then the author and Schn$\ddot{u}$rer studied entire spacelike hypersurfaces of constant Gauss curvature in
Minkowski space in \cite{BaySch}. Aquino and Lima showed complete spacelike hypersurface immersed with constant mean curvature or bounded mean curvature must be totally umbilical in \cite{Aquino1}, then the authors and Bezerra studied the hypersurfaces with constant normalized scalar curvature $R$ immersed into the de Sitter space $\mathbb{S}_1^{n+1}$ in \cite{Aquino2}.
Recently, in  \cite{WX}, Wang and Xiao construct strictly convex, spacelike, constant $\sigma_k$ curvature hypersurface with bounded principal curvature, whose image of the Gauss map is the unit ball. Then Ren, Wang and Xiao showed exists a unique, entire, strictly convex, spacelike  hypersurface $\mathscr{M}_u$ satisfying prescribed asymptotic behavior at infinity in  \cite{RWX}. Many other researchers have studied surfaces of constant mean curvature from other perspectives. Huang \cite{HR1} considered the second boundary value problems for mean curvature flow. Based on the parabolic equation, he constructed the translating
solitons with prescribed Gauss image in Minkowski space. In Minkowski space, there have been fruitful results on the prescribed curvature problems for spacelike
entire hypersurfaces. Alexander I. Bobenko, Tim Hoffmann and Nina Smeenk \cite{BHS} define discrete constant mean curvature (CMC) surfaces in the three-dimensional Euclidean and Lorentz spaces in terms of sphere packings with orthogonally intersecting circles. Then they construct discrete CMC surfaces in $\mathbb{R}^3$  and investigate spacelike discrete CMC surfaces in the Lorentz space $\mathbb{R}^{2,1}$.

In this paper, we will investigate convex, spacelike
hypersurfaces of constant mean curvature equation with prescribed gradient image. Our main Theorems are stated as follows.
\begin{theorem}\label{t1.1}
 Suppose that $\Omega$, $\tilde{\Omega}$ are uniformly convex bounded domains with smooth boundary in $\mathbb{R}^n$ and  $\tilde{\Omega}\subset\subset B_1(0)$.
 Then there exists a uniformly convex solution $u\in C^{\infty}(\bar{\Omega})$ and a unique constant $c$ solving \eqref{e1.1.3} and \eqref{e1.1.4}.
 Here $u$ is unique up to a constant.
\end{theorem}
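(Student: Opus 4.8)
The plan is to set up a continuity method combined with a degree argument, following the general strategy that has proven effective for second boundary value problems of Monge--Amp\`ere and Lagrangian type (see Urbas~\cite{Ju1,J2} and the works of the first author). Since the equation \eqref{e1.1.3}--\eqref{e1.1.4} carries the unknown constant $c$ along with the function $u$, the natural reformulation is this: the Legendre transform $u^{*}$ of $u$ (defined on $\tilde{\Omega}$, since $Du:\Omega\to\tilde{\Omega}$ is the relevant diffeomorphism) satisfies a dual equation, and the condition $Du(\Omega)=\tilde{\Omega}$ becomes the condition $Du^{*}(\tilde{\Omega})=\Omega$. One recognizes \eqref{e1.1.3} as the prescribed-mean-curvature equation for the spacelike graph, which in non-divergence form reads $a^{ij}(Du)D_{ij}u=c\,(1-|Du|^{2})^{3/2}$ with $a^{ij}=(1-|Du|^{2})\delta_{ij}+D_{i}u\,D_{j}u$; this is uniformly elliptic once $|Du|$ is bounded away from $1$, which the hypothesis $\tilde{\Omega}\subset\subset B_{1}(0)$ is designed to guarantee a priori.

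First I would establish the a priori estimates, which is where the real work lies. The $C^{0}$ bound for $u$ (up to the additive constant, which one normalizes away, e.g. by $\int_{\Omega}u=0$) follows from convexity together with the gradient image constraint $Du(\Omega)=\tilde\Omega\subset\subset B_{1}(0)$: this immediately gives $\sup_{\Omega}|Du|\le\sup_{\tilde\Omega}|y|<1$, so the spacelike condition is preserved with room to spare and the equation stays uniformly elliptic. The constant $c$ itself is controlled (and in fact determined) by integrating \eqref{e1.1.3} over $\Omega$ via the divergence theorem: $c\,|\Omega|=\int_{\partial\Omega}\frac{D u\cdot\nu}{\sqrt{1-|Du|^{2}}}$, and since $Du$ maps $\partial\Omega$ to $\partial\tilde\Omega$ this is a fixed geometric quantity depending only on $\Omega,\tilde\Omega$ — this is exactly the \emph{uniqueness} of $c$. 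The crux is the $C^{2}$ estimate, and specifically the \emph{double normal} second derivative estimate at the boundary: because the boundary condition is the oblique/nonlinear condition $Du(\Omega)=\tilde\Omega$ rather than a Dirichlet condition, one cannot read off $D^{2}u$ tangentially for free. Here I would use the defining functions: let $\tilde h$ be the defining function of $\tilde\Omega$, so the boundary condition is $\tilde h(Du)=0$ on $\partial\Omega$; differentiating twice along $\partial\Omega$ and combining with the uniform convexity of both $\Omega$ and $\tilde\Omega$ (which makes the barrier constructions work) yields the mixed tangential-normal and tangential-tangential bounds, and then the pure normal second derivative is recovered from the equation itself using ellipticity. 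The interior $C^{2}$ estimate is then obtained by the standard maximum-principle computation on an auxiliary function of the form $\log\Delta u+$ (gradient terms), differentiating the equation twice and using the concavity structure; the $C^{1,1}$ bound plus the uniform ellipticity then feeds into Evans--Krylov to give $C^{2,\alpha}(\bar\Omega)$, and Schauder bootstrapping gives $C^{\infty}(\bar\Omega)$.

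Next, with the a priori estimates in hand, I would run the method of continuity. One connects the target $\tilde\Omega$ (or the right-hand side data) to a model case through a family $\{\tilde\Omega_{t}\}_{t\in[0,1]}$ of uniformly convex domains with $\tilde\Omega_{t}\subset\subset B_{1}(0)$ uniformly, with $t=0$ an explicitly solvable configuration (for instance $\tilde\Omega_{0}$ a small ball, where $c=0$ and $u$ affine, or another configuration where a radial solution exists). Let $I\subset[0,1]$ be the set of $t$ for which a solution $(u_{t},c_{t})$ exists in $C^{2,\alpha}(\bar\Omega)$ with $u_{t}$ uniformly convex. Openness of $I$ follows from the implicit function theorem applied to the linearized operator — which is a linear uniformly elliptic operator with the linearized oblique boundary condition $D\tilde h(Du_{t})\cdot Dv=0$; because $\tilde\Omega_{t}$ is uniformly convex this boundary operator is genuinely oblique, so the linearization is an isomorphism on the appropriate Banach spaces once one quotients by constants and pairs it with the scalar unknown $\delta c$. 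Closedness of $I$ is precisely the content of the a priori estimates together with Arzel\`a--Ascoli. Since $[0,1]$ is connected and $I$ is nonempty, open, and closed, $I=[0,1]$, giving existence at $t=1$.

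Finally, uniqueness of $u$ up to an additive constant: if $u_{1},u_{2}$ are two uniformly convex solutions with constants $c_{1},c_{2}$, the integration-by-parts identity above already forces $c_{1}=c_{2}=:c$; then $w=u_{1}-u_{2}$ satisfies a linear homogeneous uniformly elliptic equation $b^{ij}D_{ij}w+b^{i}D_{i}w=0$ (obtained by subtracting the two equations and using the mean value theorem on the nonlinearity $\mathrm{div}(Du/\sqrt{1-|Du|^{2}})$), with the boundary condition that $Du_{1}$ and $Du_{2}$ both map $\partial\Omega$ onto $\partial\tilde\Omega$; a Hopf-lemma / maximum-principle argument on the oblique boundary condition $D\tilde h\cdot Dw\le 0$ structure, exactly as in Urbas, forces $w$ to be constant. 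I expect the \textbf{double normal second-order boundary estimate} to be the main obstacle, as is typical for these second boundary value problems — it is the step where the uniform convexity of \emph{both} domains and the strict separation $\tilde\Omega\subset\subset B_{1}(0)$ must be used in an essential and somewhat delicate way to build the barriers controlling $u_{\nu\nu}$ on $\partial\Omega$.
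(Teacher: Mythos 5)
Your overall architecture (a priori estimates, continuity method, linearized invertibility, maximum-principle uniqueness) matches the paper's, but there are two genuine gaps in the estimates and one in the continuity setup. First, you have misidentified the hard part of the $C^{2}$ estimate for this particular equation. Since the operator is $\sum_i\kappa_i=c$ and the solution is convex, the \emph{upper} bound $D^{2}u\le CI_n$ (including $u_{\nu\nu}$) is nearly free once $c$ is bounded — this is the paper's Lemma \ref{lem4.1}. What is actually needed, and what your proposal never addresses, is the \emph{lower} bound $D^{2}u\ge C^{-1}I_n$: without it the limit in the closedness step need not be uniformly convex, $Du$ need not remain a diffeomorphism onto $\tilde\Omega_t$, and the conclusion of the theorem (a uniformly convex solution) is not reached. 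The paper obtains this by passing to the Legendre transform $\tilde u$, whose dual equation \eqref{e2.17} is estimated from above on $\bar{\tilde\Omega}$ (Lemmas \ref{lem4.3}–\ref{lem4.7}), since $D^{2}\tilde u=(D^{2}u)^{-1}$. Moreover, your plan to "differentiate $\tilde h(Du)=0$ twice tangentially and recover the normal derivative from the equation" silently requires the \emph{quantitative strict obliqueness estimate} $\langle\beta,\nu\rangle\ge 1/C_1$ with $C_1$ independent of the solution (the paper's Lemma \ref{llll3.4}, occupying all of Section 3): tangential differentiation only controls derivatives in the $\beta=Dh(Du)$ direction, and converting these into bounds in arbitrary directions uses the decomposition $\tilde\xi=\tilde\varsigma(\tilde\xi)+\frac{\langle\tilde\nu,\tilde\xi\rangle}{\langle\tilde\beta,\tilde\nu\rangle}\tilde\beta$, which degenerates unless $\langle\beta,\nu\rangle$ is bounded below. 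Treating obliqueness as an automatic consequence of convexity (as in Lemma \ref{lem3.5}, which only gives $\ge 0$) is not enough; the barrier construction proving the strict lower bound is a central step you have omitted.

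Two further points. Your proposed starting point for the continuity path — "$\tilde\Omega_0$ a small ball, where $c=0$ and $u$ affine" — cannot work: an affine $u$ has $Du(\Omega)$ equal to a single point, never a ball, and more generally with $\Omega$ fixed and $\tilde\Omega_0$ a ball there is no explicit solution. The paper instead deforms \emph{both} domains simultaneously through sub-level sets of the defining functions, so that for small $t$ both $\Omega_t$ and $\tilde\Omega_t$ are balls and the explicit radial solution of Lemma \ref{lem6.1} (with $c=\frac{nt_0}{\sqrt{1-t_0^{2}}\,R_0}\neq 0$) provides the seed. Finally, your claim that the divergence identity $c|\Omega|=\int_{\partial\Omega}\frac{Du\cdot\nu}{\sqrt{1-|Du|^{2}}}\,ds$ already "determines" $c$ and hence forces $c_1=c_2$ is unjustified: the boundary integral depends on which point of $\partial\tilde\Omega$ each $x\in\partial\Omega$ is sent to by $Du$, i.e.\ on the solution itself, not only on the domains (unlike the Monge--Amp\`ere case, where $c$ is fixed by volumes). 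The identity only yields the bound $\Lambda_1\le c\le\Lambda_2$ of Lemma \ref{L2.1}; uniqueness of $c$ and of $u$ up to a constant must come from a comparison/maximum-principle argument for two solutions together with the oblique boundary condition, as in Lemma 5.1 of \cite{RS} to which the paper appeals.
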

To obtain the existence result, we use the continuity method and by carrying out a priori estimates on the solutions to \eqref{e1.1.3} and \eqref{e1.1.4}.

We go on considering the  constant mean curvature equation  in $\mathbb{R}^{n+1}$:
\begin{equation}\label{e12a}
\mathrm{div}\left(\frac{Du}{\sqrt{1+|Du|^2}}\right)=c,  \quad x\in \Omega,
\end{equation}
associated with the second boundary value problem
\begin{equation}\label{e12b}
Du(\Omega)=\tilde{\Omega}.
\end{equation}

Based on the same proof as Theorem \ref{t1.1}, an immediate consequence of  the above problem  is the following:
\begin{theorem}\label{t1.2}
 Suppose that $\Omega$, $\tilde{\Omega}$ are uniformly convex and bounded domains with smooth boundary in $\mathbb{R}^n$.
  Then there exists a uniformly convex solution $u\in C^{\infty}(\bar{\Omega})$ and a unique constant $c$ solving \eqref{e12a} and \eqref{e12b}.
  Here $u$ is unique up to a constant.
\end{theorem}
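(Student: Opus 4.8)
I would prove Theorem~\ref{t1.2} by exactly the continuity-method scheme used for Theorem~\ref{t1.1}; the only structural change is that the spacelike factor $\sqrt{1-|Du|^2}$ becomes $\sqrt{1+|Du|^2}$, and since the operator in \eqref{e12a} is elliptic on every locally convex function with no upper bound needed on $|Du|$, the hypothesis $\tilde\Omega\subset\subset B_1(0)$ can simply be dropped. Normalize $u$ by $u(x_0)=0$ at a fixed $x_0\in\Omega$ and treat the pair $(u,c)$ as the unknown. Choose a continuous family of uniformly convex bounded smooth domains $(\Omega_t,\tilde\Omega_t)$, $t\in[0,1]$, with $(\Omega_1,\tilde\Omega_1)=(\Omega,\tilde\Omega)$ and $(\Omega_0,\tilde\Omega_0)$ a pair of concentric balls; for the latter the lower spherical cap $u_0(x)=R-\sqrt{R^2-|x|^2}$ is an explicit uniformly convex solution of \eqref{e12a}--\eqref{e12b} with $c_0=n/R$, since $Du_0/\sqrt{1+|Du_0|^2}=x/R$, and the radii can be matched to any concentric pair. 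Let $\mathcal I\subseteq[0,1]$ be the set of $t$ for which the $t$-problem has a uniformly convex solution $u_t\in C^\infty(\bar\Omega_t)$ with some $c_t$; then $0\in\mathcal I$, and the aim is $\mathcal I=[0,1]$.

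For openness: at a solution, \eqref{e12a} is quasilinear and, by the automatic bound $|Du_t|\le\sup_{y\in\tilde\Omega_t}|y|$ built into \eqref{e12b}, uniformly elliptic; writing \eqref{e12b} as $\tilde g_t(Du_t(x))=0$ on $\partial\Omega_t$ with $\tilde\Omega_t=\{\tilde g_t<0\}$, it linearizes to the oblique condition $D\tilde g_t(Du_t)\cdot Dv=\psi$, strict obliqueness coming from the convexity of $\Omega_t$ and $\tilde\Omega_t$. The linearized operator $L_t$ has no zeroth-order term (constants solve the homogeneous problem), so the oblique boundary value problem has a one-dimensional kernel and cokernel; the extra unknown $\delta c$ absorbs the cokernel, making $(v,\delta c)\mapsto(L_t v-\delta c,\,B_t'v)$ an isomorphism once $v(x_0)=0$ is imposed. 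Hence $\mathcal I$ is open in $[0,1]$.

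For the a priori estimates (the substance of the argument) I need bounds uniform in $t$. The $C^0$ and $C^1$ bounds are immediate: $|Du_t|\le C$ from \eqref{e12b}, hence $\|u_t\|_{C^0}\le C$ by the normalization, and integrating \eqref{e12a} over $\Omega_t$ gives $|c_t|\le|\partial\Omega_t|/|\Omega_t|\le C$; the gradient bound also keeps \eqref{e12a} uniformly elliptic. The two decisive estimates are (i) the global bound $\|D^2 u_t\|\le C$ and (ii) the convexity estimate $D^2 u_t\ge\lambda I$ with $\lambda>0$ independent of $t$. For (i) I would combine an interior maximum-principle estimate for an auxiliary function involving the largest eigenvalue of $D^2 u_t$ with boundary second-derivative estimates on $\partial\Omega_t$ --- tangential, tangential--normal, and double-normal --- obtained from the geometry of the second boundary condition as in Urbas~\cite{J2}. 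For (ii) I would pass to the Legendre transform $u_t^*$ on $\tilde\Omega_t$, under which $D^2 u_t^*=(D^2 u_t)^{-1}$ and the roles of $\Omega_t$ and $\tilde\Omega_t$ are interchanged, so that a positive lower bound on $D^2 u_t$ is an upper bound on $D^2 u_t^*$ and follows by applying the argument of (i) to the dual problem, using the uniform convexity of $\tilde\Omega_t$ symmetrically. I expect (i)--(ii), in particular the boundary double-normal estimate and the convexity estimate, to be the main obstacle.

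Given these uniform $C^2$ bounds and uniform convexity, \eqref{e12a} is uniformly elliptic with controlled coefficients, so Evans--Krylov together with the Schauder theory for oblique problems gives $\|u_t\|_{C^{2,\alpha}}\le C$, and differentiating \eqref{e12a} and bootstrapping yields uniform $C^k$ bounds for every $k$; hence $\mathcal I$ is closed, since an Arzel\`a--Ascoli limit of solutions solves the limit problem and remains uniformly convex, therefore admissible. Thus $\mathcal I=[0,1]$ and $t=1$ gives the required $u\in C^\infty(\bar\Omega)$ and $c$. For uniqueness, if $(u_1,c_1)$ and $(u_2,c_2)$ are two solutions, the comparison principle for \eqref{e12a} on convex functions, combined with the second boundary condition via a standard sliding argument, forces $c_1=c_2$ and then $u_1-u_2\equiv\text{const}$ (the difference satisfies a homogeneous linear elliptic equation without zeroth-order term and a homogeneous oblique boundary condition, hence is constant by the strong maximum principle and the Hopf lemma).
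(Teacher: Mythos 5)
Your proposal is correct and follows essentially the same route as the paper, which proves Theorem \ref{t1.2} by running the Theorem \ref{t1.1} machinery verbatim (obliqueness and $C^2$ estimates via the Legendre transform in the spirit of Urbas, invertibility of the linearization, and a continuity method starting from explicit radial solutions on balls), with the factor $\sqrt{1-|Du|^2}$ replaced by $\sqrt{1+|Du|^2}$ and the hypothesis $\tilde\Omega\subset\subset B_1(0)$ dropped since $|Du|\le\sup_{\tilde\Omega}|y|$ already yields uniform ellipticity. Your spherical-cap starting solution $u_0(x)=R-\sqrt{R^2-|x|^2}$ with $c_0=n/R$ is the Euclidean analogue of the paper's Lemma \ref{lem6.1}, and the remaining steps match the paper's Sections 3--6.
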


\begin{remark}
Theorem \ref{t1.2} is a generalization of Urbas' works \cite{J2}, where he considered the Weingarten curvature equation $F[u]=g(x,u)$ with $g(x,z)\rightarrow \infty$
as $z\rightarrow \infty$ and $g_z>0$, which rules out the constant case.
\end{remark}

The rest of this article is organized as follows. In section 2, we introduce some basic formulas and notations, and then present the structure condition for the mean curvatrue equation. Thus in section 3, we devote to carry out the strictly oblique estimate and then in section 4 we obtain the  $C^2$ estimate according to the structure properties of the operators $G$ and $\tilde{G}$. We will give the special case of solution of $(\ref{e1.1.3})$ and  $(\ref{e1.1.4})$ and prove the main theorem by the continuity method as same as Brendle and Warren's work \cite{SM} in section 5 and 6.

\section{Preliminaries}
In this section, we will derive some basic formulas for the geometric quantities of spacelike hypersurfaces in Minkowski space $\mathbb{R}^{n,1}$.
We then give the  structure condition for the mean curvature equation referring to \cite{WHB1}.

We start with the definitions and notations of differential geometry for graphic hypersurface in Minkowski space $\mathbb{R}^{n,1}$,
the readers can see \cite{EH} and \cite{Ec} for a nice introduction.
A spacelike hypersurface $M\subset \mathbb{R}^{n,1}$ is a codimension one submanifold  with the Lorentzian metric \eqref{e1.1.1}
whose induced metric is Riemannian. Locally $M$ can be written as a graph
\begin{align}
    \mathcal{M}_u = \{X = (x, u(x))|x \in \mathbb{R}^n\}
\end{align}
satisfying the spacelike condition $(\ref{e1.1.4})$.
It is easy to see that the induced metric and second
fundamental form of $M $ are given by
\begin{align}
g_{ij}=\delta_{ij}-D_{i}uD_{j}u,\quad 1\leq i,j\leq n.
\end{align}

While the inverse of the induced metric and second fundamental form of $M$ are given respectively by
\begin{align}
g^{ij}=\delta_{ij}+\frac{D_{i}uD_{j}u}{1-|Du|^{2}},\quad 1\leq i,j\leq n,
\end{align}
and
\begin{align}
h_{ij}=\frac{D_{ij}u}{\sqrt{1-|Du|^{2}}},\quad 1\leq i,j\leq n.
\end{align}
The timelike unit normal vector field to $M$ is expressed by
\begin{align}
\nu=\frac{(Du,1)}{\sqrt{1-|Du|^{2}}}.
\end{align}
Specially, the mean curvature of $M$ is written as
\begin{equation}\label{e1.11}
H=\sum_{1\leq i\leq n}\kappa_{i}.
\end{equation}

Let $B_1(0)$ be the unit ball in $\mathbb{R}^n$ with the Klein model of the hyperbolic geometry $\{(x,1)\in \mathbb{R}^{n,1}, |x|<1\}$.
Following Lemma 4.5 in \cite{Choi}, the Gauss map of the graph $(x,u(x))$ is described from $R^n$ to $B_1(0)$ as
$$ \mathbb{G} : x \mapsto Du(x).$$

We aim to construct convex spacelike constant mean curvature hypersurfaces with prescribed Gauss
image over any strictly convex domains by solving problem \eqref{e1.1.3} and \eqref{e1.1.4}.

We assume that $u\in C^{2}(\Omega)$ for some domain in $\mathbb{R}^{n}$  .
For $1\leq i,j,k \leq n$, we denote $$D_{i}u=\dfrac{\partial u}{\partial x_{i}},
D_{ij}u=\dfrac{\partial^{2}u}{\partial x_{i}\partial x_{j}},
D_{ijk}u=\dfrac{\partial^{3}u}{\partial x_{i}\partial x_{j}\partial
x_{k}}, \cdots $$
and $|Du|=\sqrt{\sum_{i=1}^{n}|D_{i}u|^{2}}.$
It follows from \cite{LLJ}  that  we can state various geometric quantities associated with the graph of  $u\in C^{2}(\Omega)$.
In the coordinate systems, Latin indices range from 1 to $n$ and indicate quantities in the graph.
We adopt the Einstein's convention of summation over repeated indices in the following.\\
Let
\begin{equation}
        F(\kappa_{1},\cdots, \kappa_{n}):=\sum_{i=1}^n\kappa_{i},
\end{equation}
be a smooth function on the positive cone
\begin{align*}
    \Gamma^+_n:=\left\{(\kappa_1,\cdots,\kappa_n)\in \mathbb{R}^n:\kappa_i>0,\ i=1,\cdots,n\right\}.
\end{align*}

  The $F$ is a smooth symmetric function defined on $\Gamma^+_n$. Acorrding to  \cite{ou}, the $F$ satisfies
  \begin{equation}
\sum_{i=1}^n\frac{\partial F}{\partial \kappa_i}\kappa_i= F,
\end{equation}
\begin{equation}\label{e2.2.10}
\frac{\partial F}{\partial \kappa_i}>0,\ \ 1\leq i\leq n\ \  \text{on}\ \  \Gamma^+_n,
\end{equation}
\begin{equation}
\sum_{i=1}^n\frac{\partial F}{\partial \kappa_i}=n \ \   \text{on}\ \  \Gamma^+_n,
\end{equation}
and
\begin{equation}\label{e2.2.12}
\left(\frac{\partial^2 F}{\partial \kappa_i\partial \kappa_j}\right)\leq 0\ \  \text{on}\ \  \bar{\Gamma^+_n}.
\end{equation}
\begin{lemma}\label{L2.1}
    Assume that $\Omega, \tilde{\Omega}$ are bounded, uniformly convex domains with smooth boundary in $\mathbb{R}^n$ and  $\tilde{\Omega}\subset\subset B_1(0)$. If the strictly convex solution to \eqref{e1.1.3} and \eqref{e1.1.4} exists. Then there exist positive constants $\Lambda_1$ and $\Lambda_2$, depending only on $\Omega$ and $\tilde{\Omega}$, such that there holds
\begin{align}\label{e2.2.8}
    \Lambda_1 \leq F(\kappa) \leq \Lambda_2.
\end{align}
\end{lemma}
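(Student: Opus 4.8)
The plan is to observe first that $F(\kappa)=\sum_i\kappa_i$ is nothing but the mean curvature $H$, and that a short computation from the preliminary formulas (with $h_{ij}=D_{ij}u/W$, $g^{ij}=\delta_{ij}+D_iu\,D_ju/(1-|Du|^{2})$, and $W:=\sqrt{1-|Du|^{2}}$) gives
\[
H=g^{ij}h_{ij}=\frac{\Delta u}{W}+\frac{D_iu\,D_ju\,D_{ij}u}{W^{3}}=\mathrm{div}\!\Big(\frac{Du}{W}\Big),
\]
so that $F(\kappa)$ is \emph{identically} the constant $c$ of \eqref{e1.1.3}; hence \eqref{e2.2.8} reduces to a two-sided bound $\Lambda_1\le c\le\Lambda_2$. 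Strict convexity gives $D^2u>0$, whence every $\kappa_i>0$, $c=\sum_i\kappa_i>0$, and $\det D^2u>0$. Since $\bar{\tilde\Omega}\subset B_1(0)$, the number $r_0:=\max_{\bar{\tilde\Omega}}|y|$ satisfies $r_0<1$, and because $Du(\bar\Omega)=\bar{\tilde\Omega}$ we have $|Du|\le r_0$ and $W\ge w_0:=\sqrt{1-r_0^{2}}>0$ on $\bar\Omega$.

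For the upper bound I would integrate \eqref{e1.1.3} over $\Omega$ and apply the divergence theorem,
\[
c\,|\Omega|=\int_\Omega\mathrm{div}\!\Big(\frac{Du}{W}\Big)\,dx=\int_{\partial\Omega}\frac{Du\cdot\nu}{W}\,dS ,
\]
with $\nu$ the outer unit normal; since $|Du\cdot\nu|\le|Du|\le r_0$ and $W\ge w_0$ on $\partial\Omega$, this yields $c\le\Lambda_2:=r_0\,|\partial\Omega|/(w_0\,|\Omega|)$, a constant depending only on $\Omega$ and $\tilde\Omega$.

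For the lower bound the idea is to couple the arithmetic--geometric mean inequality for the positive principal curvatures with the Monge--Amp\`{e}re structure of the Gauss map. Writing $K:=\prod_i\kappa_i$ for the Gauss--Kronecker curvature, one has $H\ge n\,K^{1/n}$; moreover $\det(h_{ij})=\det D^2u/W^{n}$ and $\det(g_{ij})=1-|Du|^{2}=W^{2}$, hence $K=\det(h_{ij})/\det(g_{ij})=\det D^2u/W^{n+2}\ge\det D^2u$ (using $W\le1$). Therefore $c^{\,n}=H^{n}\ge n^{n}K\ge n^{n}\det D^2u$ pointwise on $\Omega$, and integrating together with the area formula for the gradient diffeomorphism $Du\colon\Omega\to\tilde\Omega$ gives
\[
c^{\,n}\,|\Omega|\ \ge\ n^{n}\!\int_\Omega\det D^2u\,dx\ =\ n^{n}\,|Du(\Omega)|\ =\ n^{n}\,|\tilde\Omega| ,
\]
so $c\ge\Lambda_1:=n\,(|\tilde\Omega|/|\Omega|)^{1/n}>0$, again depending only on the two domains. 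This establishes \eqref{e2.2.8}.

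The only step that is more than bookkeeping is the lower bound: at this stage no gradient or Hessian estimates are available, so a \emph{positive} lower bound on $c$ must be extracted from the ambient geometry alone, and the mechanism is that $H$ dominates $n$ times the $n$-th root of the Monge--Amp\`{e}re density of $u$, whose total mass over $\Omega$ equals $|\tilde\Omega|>0$. The upper bound is, by contrast, an immediate consequence of the divergence theorem together with the standing assumption $\tilde\Omega\subset\subset B_1(0)$, which is exactly what keeps $W$ bounded away from $0$.
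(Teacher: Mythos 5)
Your proof is correct and follows essentially the same route as the paper: the lower bound comes from the arithmetic--geometric mean inequality $H\ge nK^{1/n}$ combined with the area formula $\int_\Omega\det D^2u\,dx=|\tilde\Omega|$ and the fact that $1-|Du|^2\le 1$, while the upper bound comes from integrating the equation and applying the divergence theorem, using $\tilde\Omega\subset\subset B_1(0)$ to keep $\sqrt{1-|Du|^2}$ away from zero. The only cosmetic difference is that you apply the AM--GM inequality pointwise before integrating, whereas the paper integrates $\det D^2u$ first; since $F(\kappa)=c$ is constant, the two orderings are identical in substance.
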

\begin{proof}
Since $Du(\Omega)=\tilde{\Omega}$, $\tilde{\Omega} \subset\subset {B}_1(0)$,  we have
\begin{align*}
|\tilde{\Omega}|&=\int_{\tilde{\Omega}}dy=\int_{\Omega}\det D^2udx\\&=\int_{\Omega}\left(1-|Du|^2\right)^{\frac{n+2}{2}}\frac{\det D^2u}{\left(1-|Du|^2\right)^{\frac{n+2}{2}}}dx.
\end{align*}
By noting that
\begin{align*}
\kappa_1\cdot\kappa_2\cdots\kappa_n=\frac{\det D^2u}{\left(1-|Du|^2\right)^{\frac{n+2}{2}}}\quad \text{and}\quad\max_{\overline{\Omega}}\left(1-|Du|^2\right)^\frac{n+2}{2}=\max_{\tilde{\Omega}}\left(1-|y|^2\right)^\frac{n+2}{2}\leq 1,
\end{align*}
then we can get
\begin{align*}
    |\tilde{\Omega}|\leq \int_{\Omega}\kappa_1\cdot\kappa_2\cdots\kappa_ndx\leq\left(\frac{\kappa_1+\cdots+\kappa_n}{n}\right)^n|\Omega|=\frac{[F(\kappa)]^n}{n^n}|\Omega|.
\end{align*}
Then we arrive at
\begin{align*}
    F(\kappa)\geq n\left(\frac{|\tilde\Omega|}{|\Omega|}\right)^{\frac{1}{n}},
\end{align*}
Integrating  over ${\Omega}$ on the both sides of \eqref{e1.1.3}, we can obtain
\begin{align*}
   \int_\Omega \mathrm{div}\left(\frac{Du}{\sqrt{1-|Du|^2}}\right) dx = c\int_\Omega dx,
\end{align*}
 where $\nu=\left(\nu_1, \nu_2, \cdots, \nu_n\right) $ be the unit inward normal vector of $\partial \Omega $.
 By using the divergence theorem, we see that
\begin{align*}
    c=\frac{1}{|\Omega|}\int_{\partial\Omega}\frac{Du\cdot\nu}{\sqrt{1-|Du|^2}}ds,
\end{align*}
and then
\begin{align*}
    F(\kappa)|\Omega|=c|\Omega| \leq \int_{\partial \Omega} \frac{|D u \cdot \nu|}{\sqrt{1-|D u|^2}} d s\leq \int_{\partial \Omega} \frac{|D u |}{\sqrt{1-|D u|^2}} d s.
\end{align*}
 Let
\begin{align*}
\Lambda_1=n\left(\frac{|\tilde\Omega|}{|\Omega|}\right)^{\frac{1}{n}}, \quad \Lambda_2= \frac{|\partial \Omega|}{|\Omega|} \max _{y \in \partial \tilde{\Omega}} \frac{|y|}{\sqrt{1-|y|^2}}.
\end{align*}
Thus the proof of \eqref{e2.2.8} is
completed.
\end{proof}
\begin{lemma}\label{e3.2.2} Assume that $\Omega$, $\tilde{\Omega}$ are bounded, uniformly convex domains with smooth boundary in $\mathbb{R}^{n}$ and   $\tilde{\Omega}\subset\subset B_1(0)$, $u$  is the strictly convex solution to $(\ref{e1.1.3})$ and $(\ref{e1.1.4})$. Then there exist positive constants $\Lambda_3$ and $\Lambda_4$,  such that there holds
  \begin{align}
 \label{e3.1}
 \Lambda_3\leq\sum^{n}_{i=1}\frac{\partial F}{\partial \kappa_{i}}\leq \Lambda_4
 \end{align}
and
\begin{equation}\label{e3.2}
\Lambda_3\leq\sum^{n}_{i=1}\frac{\partial F}{\partial \kappa_{i}}\kappa^{2}_{i}\leq \Lambda_4.
\end{equation}
\end{lemma}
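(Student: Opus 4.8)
The plan is to observe that, because $F(\kappa_1,\dots,\kappa_n)=\sum_{i=1}^{n}\kappa_i$, the two quantities appearing in \eqref{e3.1} and \eqref{e3.2} are nothing but the power sums $\sum_i\partial F/\partial\kappa_i=\sum_i 1=n$ and $\sum_i(\partial F/\partial\kappa_i)\kappa_i^{2}=\sum_i\kappa_i^{2}$; the whole lemma therefore reduces to controlling $\sum_i\kappa_i^{2}$ from above and below, and for this the essential input is already in place, namely the two-sided bound $\Lambda_1\le F(\kappa)=\sum_i\kappa_i\le\Lambda_2$ on $\overline{\Omega}$ furnished by Lemma \ref{L2.1}. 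In particular the first inequality \eqref{e3.1} is immediate: $\sum_i\partial F/\partial\kappa_i\equiv n$ (this is also the normalization $\sum_iF_i=n$ recorded among the structure conditions), so any choice with $\Lambda_3\le n\le\Lambda_4$ works and the constants there are universal.

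For \eqref{e3.2} I would use the strict convexity of $u$, which guarantees $\kappa_i(x)>0$ for every $i$ and every $x\in\overline{\Omega}$. Positivity gives at once $\sum_i\kappa_i^{2}\le\bigl(\sum_i\kappa_i\bigr)^{2}=F(\kappa)^{2}\le\Lambda_2^{2}$, since the omitted cross terms $\kappa_i\kappa_j$ are nonnegative; and the Cauchy--Schwarz (equivalently power-mean) inequality gives $\sum_i\kappa_i^{2}\ge\frac1n\bigl(\sum_i\kappa_i\bigr)^{2}=\frac1n F(\kappa)^{2}\ge\frac{\Lambda_1^{2}}{n}$. Hence $\frac{\Lambda_1^{2}}{n}\le\sum_i(\partial F/\partial\kappa_i)\kappa_i^{2}\le\Lambda_2^{2}$ on $\overline{\Omega}$, and taking $\Lambda_3:=\min\{n,\Lambda_1^{2}/n\}$ and $\Lambda_4:=\max\{n,\Lambda_2^{2}\}$ establishes \eqref{e3.1} and \eqref{e3.2} simultaneously; both constants are positive and depend only on $n,\Omega,\tilde{\Omega}$ through $\Lambda_1,\Lambda_2$.

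There is no genuine analytic obstacle once Lemma \ref{L2.1} is available, so the honest summary is that Lemma \ref{e3.2.2} is an algebraic corollary of the mean-curvature bounds; the two points that should not be glossed over are that strict convexity is what legitimizes both the crude bound $\sum_i\kappa_i^{2}\le(\sum_i\kappa_i)^{2}$ and, via the proof of Lemma \ref{L2.1}, the strict positivity $F(\kappa)\ge\Lambda_1>0$ underlying the lower estimate, and that all constants are controlled purely by the geometry of $\Omega$ and $\tilde{\Omega}\subset\subset B_1(0)$ with no dependence on derivatives of $u$ of order two or higher — which is exactly what makes these bounds usable as an a priori ingredient in the strictly-oblique and $C^{2}$ estimates of Sections 3 and 4.
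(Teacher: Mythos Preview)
Your proposal is correct and follows essentially the same argument as the paper: both observe that $\partial F/\partial\kappa_i\equiv 1$ so that \eqref{e3.1} is the trivial identity $\sum_i F_i=n$, then reduce \eqref{e3.2} to the elementary two-sided bound $\frac{1}{n}(\sum_i\kappa_i)^2\le\sum_i\kappa_i^2\le(\sum_i\kappa_i)^2$ and invoke Lemma~\ref{L2.1}. Your explicit choice of $\Lambda_3,\Lambda_4$ and the remarks on the role of strict convexity and the dependence of constants are a little more detailed than the paper's terse version, but the approach is the same.
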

\begin{proof}
  We observe that the operator
 $$F=\sum_{i=1}^n\kappa_{i}=c$$ and $$\frac{\partial F}{\partial \kappa_{i}}=1.$$
Then $$\sum^{n}_{i=1}\frac{\partial F}{\partial \kappa_{i}}=n,$$
with
$$\sum^{n}_{i=1}\frac{\partial F}{\partial \kappa_{i}}\kappa^{2}_{i}=\sum^{n}_{i=1}\kappa^{2}_{i},$$
and using Cauchy inequality, we obtain
$$ \frac{1}{n}(\kappa_1+\cdots+\kappa_n)^2\leq \kappa_1^2+\cdots+\kappa_n^2\leq (\kappa_1+\cdots+\kappa_n)^2 ,$$
combining with $(\ref{e2.2.8})$ in Lemma \ref{L2.1}, we know $F$ is bounded and
we obtain the desired results.
\end{proof}
The principal curvatures of $M\subset \mathbb{R}^{n,1}$ are the eigenvalues of the second fundamental form
$h_{ij}$ relative to $g_{ij}$, i.e., the eigenvalues of the mixed tensor $h^{j}_{i}\equiv h_{ik}g^{kj}$.
By \cite{LLJ} we remark that they are the eigenvalues of the symmetric matrix
\begin{equation}\label{e2.2.13}
a_{ij}=\frac{1}{v}b^{ik}D_{kl}ub^{lj},
\end{equation}
where $v=\sqrt{1-|Du|^{2}}$ and $b^{ij}$ is the positive square root of $g^{ij}$ taking the form
\begin{align*}
    b^{ij}=\delta_{ij}+\frac{D_{i}uD_{j}u}{v(1+v)}.
\end{align*}

The inverse of $b^{ij}$ is
\begin{align*}
    b_{ij}=\delta_{ij} - \frac{D_{i}uD_{j}u}{1+v},
\end{align*}
which is the square root of $g_{ij}$.\\
And then
\begin{align*}
    D_{ij}u=vb_{ik}a_{kl}b_{lj}.
\end{align*}

By some calculation, it yields
\begin{align*}
 a_{ij}=\frac{1}{v}\left (D_{ij}u - \frac{D_iuD_luD_{jl}u}{v(1+v)} - \frac{D_juD_luD_{il}u}{v(1+v)} + \frac{D_iuD_kuD_luD_juD_{kl}u}{v^2(1+v)^2}\right).
\end{align*}

Denote $\mathcal{A}=[a_{ij}]$ and $F[\mathcal{A}]=\sum_{i=1}^n\kappa_{i}$, where $(\kappa_1,\cdots, \kappa_n)$ are the  eigenvalues of
the symmetric matrix $[a_{ij}]$. Then the properties of the operator $F$ are reflected in (\ref{e2.2.8})-(\ref{e3.2}).
It follows from (\ref{e2.2.10}) that
\begin{equation*}
F_{ij}[\mathcal{A}]\xi_{i}\xi_{j}>0 \quad \mathrm{for}\quad \mathrm{all}\quad \xi\in \mathbb{R}^{n}-\{0\},
\end{equation*}
where
\begin{equation*}
F_{ij}[\mathcal{A}]=\frac{\partial F[\mathcal{A}]}{\partial a_{ij}}.
\end{equation*}

From \cite{JS} we see that $[F_{ij}]$ is diagonal if $\mathcal{\mathcal{A}}$ is diagonal, and in this case
\begin{equation*}
[F_{ij}]=\mathrm{diag}(\frac{\partial F}{\partial \kappa_{1}},\cdots, \frac{\partial F}{\partial \kappa_{n}})=\mathrm{diag}(1,\cdots,1).
\end{equation*}

If $u$ is convex, by (\ref{e2.2.13}) we deduce that the  eigenvalues of the  matrix $[a_{ij}]$ must be in $\bar{\Gamma^+_n}$.
Then (\ref{e2.2.12}) implies  that
\begin{align*}
F_{ij,kl}[\mathcal{A}]\eta_{ij}\eta_{kl}\leq 0,
\end{align*}
for any real symmetric matrix $[\eta_{ij}]$, where
\begin{equation*}
F_{ij,kl}[\mathcal{A}]=\frac{\partial^{2}F[\mathcal{A}]}{\partial a_{ij}\partial a_{kl}}.
\end{equation*}

According to the equation (\ref{e1.1.3}), we consider the nonlinear differential operators of the type
\begin{equation*}\label{e2.10}
G(Du,D^{2}u)=0.
\end{equation*}

As in \cite{J2}, by differentiating this equation once, we have
\begin{equation*}
G_{ij}D_{ijk}u+G_{i}D_{ik}u=0,
\end{equation*}
where we use the notation
\begin{equation*}
G_{ij}=\frac{\partial G}{\partial r_{ij}} \quad \text{and} \quad G_{i}=\frac{\partial G}{\partial p_{i}},
\end{equation*}
with $r$ and $p$  representing for the second derivatives and gradient variables respectively.
So as to prove the strict obliqueness estimate for the problem (\ref{e1.1.3})-(\ref{e1.1.4}), we need to recall some expressions from \cite{J2} for the derivatives of $G$.  We have
\begin{equation}\label{e2.2.14}
G_{ij}=F_{kl}\frac{\partial a_{kl}}{\partial r_{ij}}=\frac{1}{v}b^{ik}F_{kl}b^{lj}
\end{equation}
and
\begin{equation*}\label{e2.12}
G_{i}=F_{kl}\frac{\partial a_{kl}}{\partial p_i}=F_{kl}\frac{\partial }{\partial p_i}\left(\frac{1}{v}b^{kp}b^{ql}\right)u_{pq}.
\end{equation*}

A simple calculation yields
\begin{align*}
    G_i=\frac{u_i}{v^2}F_{kl}a_{kl}+\frac{2}{v}F_{kl}a_{ml}b^{ik}u_m.
\end{align*}

We observe that $\mathcal{T}_{G}=\sum^{n}_{i=1}G_{ii}$ is the trace of a product of three matrices by (\ref{e2.2.14}), so it is invariant under orthogonal transformations. Hence, to compute $\mathcal{T}_{G}$, we may assume for now that $[a_{ij}]$ is diagonal. By virtue of (\ref{e1.1.4}) and $\tilde{\Omega}\subset\subset B_{1}(0)$, we obtain that $Du$ and $\frac{1}{v}$  are bounded. Then eigenvalues of $[b^{ij}]$ are bounded between two controlled positive constants.
We can observe that
\begin{align*}
\mathcal{T}_{G}=\sum_{i=1}^nG_{ii}=\frac{1}{v}b^{ik}F_{kl}b^{li},
\end{align*}
it follows that there exist positive constants $\sigma_{1}$,
$\sigma_{2}$ depending only on the least upper bound of $|Du|$ in the set $\Omega$, such that
\begin{equation}\label{e2.14}
\sigma_{1}\mathcal{T}\leq\mathcal{T}_{G}\leq\sigma_{2}\mathcal{T},
\end{equation}
where $\mathcal{T}=\sum^{n}_{i=1}F_{ii}$.
By the concavity of $F$ and the positive definiteness of $[F_{ij}a_{ij}]$ imply that$[F_{ij}a_{ij}]$ is controlled by $F$, i.e.,
\begin{equation*}\label{e2.15}
0<F_{ij}a_{ij}=\sum^{n}_{i=1}F_{i}\kappa_{i}\leq F(\kappa_{1},\ldots,\kappa_{n}).
\end{equation*}
Thus $G_i$ is bounded.

Next, we will use the Lengendre transformation of $u$ which is the convex function $\tilde{u}$ on $\Tilde{\Omega}=Du(\Omega)$ define by \\
\begin{align*}
    \tilde{u}(y)=x\cdot Du(x)-u(x),
\end{align*}
and
\begin{align*}
    y=Du(x).
\end{align*}
It follows that
\begin{align*}
    \frac{\partial \tilde{u}}{\partial y_{i}}=x_{i},\quad\frac{\partial ^{2}\tilde{u}}{\partial y_{i}\partial y_{j}}=u^{ij}(x),
\end{align*}
where $[u^{ij}]=[D^2 u]^{-1}$. Then $\tilde{u}$ satisfies
\begin{align}\label{transfoemation G}
    \tilde{G}(y,D^2\tilde{u})=-G(y,[D^2\tilde{u}]^{-1})=-c \qquad \text{in} \quad \tilde{\Omega},
\end{align}
with the boundary condition
\begin{align*}
    D\tilde{u}(\tilde{\Omega})=\Omega.
\end{align*}
Moreover, $(\ref{transfoemation G})$ can be written as
\begin{align*}
    \tilde{F}[\tilde{a}_{ij}]=-c,
\end{align*}
where
\begin{align*}
    \tilde{F}[\tilde{a}_{ij}]=\tilde{F}(\eta_1,\eta_2,\cdots,\eta_n)=-F(\eta_1^{-1},\eta_2^{-1},\cdots,\eta_n^{-1}).
\end{align*}

Here $\tilde{F}$ satisfies the structural conditions of Lemma \ref{e3.2.2}, $\eta_1,\eta_2,\cdots,\eta_n$ are the eigenvalues of the matrix [$\Tilde{a}_{ij}$] and
\begin{align*}
    \tilde{v}=\sqrt{1-|y|^2},
\end{align*}
\begin{align*}
    [\tilde{a}_{ij}]=\sqrt{1-|y|^2}\tilde{b}_{ik}D_{kl}\tilde{u}\tilde{b}_{lj},
\end{align*}
\begin{align*}
    \tilde{b}_{ij}=\delta_{ij}-\frac{y_iy_j}{1+\tilde{v}},
\end{align*}
where $[\tilde{b}^{ij}]$ is denoted the inverse matrix of $[\tilde{b}_{ij}]$ , it is given by
\begin{align*}
    \tilde{b}^{ij}=\delta_{ij}+\frac{y_iy_j}{\tilde{v}(1+\tilde{v})}.
\end{align*}

Since $y\in\tilde{\Omega}$, the eigenvalues of $[\tilde{b}_{ij}]$ and $[\tilde{b}^{ij}]$ are bounded between two controlled positive constants $\sigma_3,\sigma_4$. Consequently, we have
\begin{align}\label{e2.2.19}
\sigma_3\tilde{\mathcal{T}}\leq\tilde{\mathcal{{T}}}_{\tilde{G}}\leq \sigma_4\tilde{\mathcal{T}},
\end{align}
where $\tilde{\mathcal{T}}=\sum_{i=1}^n\tilde{F}_{ii}$, $\Tilde{\mathcal{T}}_{\tilde{G}}=\sum_{i=1}^n\Tilde{G}_{ii}$, we can conclude that
\begin{align*}
\tilde{G}_{ij}D_{ijk}\tilde{u}+\tilde{G}_{y_k}=0,
\end{align*}
where
\begin{align*}
    \tilde{G}_{ij}=\sqrt{1-|y|^2}\tilde{b}_{ik}\tilde{F}_{kl}\tilde{b}_{lj},
\end{align*}
and
\begin{align*}    \tilde{G}_{y_i}&=\tilde{F_{kl}}\frac{\partial}{\partial y_i}\left(\sqrt{1-|y|^2}\tilde{b}_{kp}\tilde{b}_{ql}\right)\tilde{u}_{pq}\\
&=-\frac{y_i}{1-|y|^2}\tilde{F}_{kl}\tilde{a}_{kl}+2\tilde{F}_{kl}\tilde{a}_{lm}\frac{\partial}{\partial y_i}(\tilde{b}_{kp})\cdot \tilde{b}^{pm}\\
 &=-\frac{y_i}{1-|y|^2}\tilde{F}_{kl}\tilde{a}_{kl}+\tilde{F}_{kl}\tilde{a}_{lm}\left(-\frac{\delta_{ik}y_p}{1+\tilde{v}}-\frac{y_{k}\delta_{ip}}{1+\tilde{v}}-\frac{y_ky_i}{(1+\tilde{v})^2\tilde{v}}\right)\cdot \left(\delta_{pm}+\frac{y_py_m}{\tilde{v}(1+\tilde{v})}\right)\\
 &=-\frac{y_i}{1-|y|^2}\tilde{F}_{kl}\tilde{a}_{kl}+\tilde{F}_{kl}\tilde{a}_{lm}c_{ikmp},
\end{align*}
$c_{ikmp}$ depending on $y,y_i,y_k,y_m,y_p$ and $c$, $c$ is a constant. We can similarly show that  $\tilde{G}_{y_i}$ is bounded. Therefore, there holds
  \begin{align}
 |\tilde{G}_{y_i}|\leq \Lambda_5,
  \end{align}
 where $\Lambda_5$ is a uniformly positive constant.
\section{The strict obliqueness estimate}
In this section, we will give the structural conditions for the operator $G$.
We will carry out the strict obliqueness estimates.
\begin{Corollary}\label{c3.3}
 Assume that $\Omega$, $\tilde{\Omega}$ are bounded, uniformly convex domains with smooth boundary in $\mathbb{R}^{n}$ and  $\tilde{\Omega}\subset\subset B_{1}(0)$. Suppose that $u \in C^2(\overline{\Omega})$ is a uniformly convex solution of \eqref{e1.1.3} and  \eqref{e1.1.4}. If the strictly convex solution to \eqref{e1.1.3} and  \eqref{e1.1.4}.
There   exists  uniformly  positive constants  $\Lambda_6,\Lambda_7$,  depending  on the known data,  such that there holds
\begin{equation}\label{e3.4}
\Lambda_6\leq \sum^{n}_{i=1}\frac{\partial G}{\partial \lambda_{i}}\leq \Lambda_7,
\end{equation}
\begin{equation}\label{e3.5}
\Lambda_6\leq \sum^{n}_{i=1}\frac{\partial G}{\partial \lambda_{i}}\lambda^{2}_{i}\leq \Lambda_7,
\end{equation}
where $\lambda_1,\cdots, \lambda_n$ are the eigenvalues of Hessian matrix $D^2 u$ at $x \in \Omega$.
\end{Corollary}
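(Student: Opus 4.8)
The plan is to reduce both \eqref{e3.4} and \eqref{e3.5} to the already established bounds of Lemma~\ref{e3.2.2}, exploiting the linearity of $F$. The starting observation is that since $F(\kappa_1,\dots,\kappa_n)=\sum_i\kappa_i$ is linear, $F_{ij}[\mathcal{A}]=\partial F/\partial a_{ij}=\delta_{ij}$ identically, so the formula \eqref{e2.2.14} collapses to $G_{ij}=\tfrac1v\,b^{ik}b^{kj}$, i.e. $[G_{ij}]=\tfrac1v[b^{ij}]^2$. By \eqref{e1.1.4} and $\tilde\Omega\subset\subset B_1(0)$ the quantities $|Du|$ and $1/v$ are bounded, hence (exactly as in the derivation of \eqref{e2.14}) the eigenvalues of $[b^{ij}]$ lie between two positive constants depending only on $\sup_\Omega|Du|$; consequently there are $\theta_1,\theta_2>0$, depending only on $\Omega,\tilde\Omega$, with
\begin{align*}
\theta_1|\xi|^2\le G_{ij}\xi_i\xi_j\le\theta_2|\xi|^2\qquad\text{for all }\xi\in\mathbb{R}^n.
\end{align*}

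For \eqref{e3.4} I would work in coordinates diagonalizing $D^2u$ at the point, so that $\partial G/\partial\lambda_i=G_{ii}$ and $\sum_{i=1}^n\partial G/\partial\lambda_i=\sum_iG_{ii}=\mathrm{tr}[G_{ij}]$; the uniform ellipticity above then gives $n\theta_1\le\sum_iG_{ii}\le n\theta_2$ at once (equivalently this is \eqref{e2.14} together with $\mathcal{T}=\sum_iF_{ii}=n$). For \eqref{e3.5}, again in a frame where $D^2u=\mathrm{diag}(\lambda_1,\dots,\lambda_n)$ at the given $x$, one has $\sum_i\frac{\partial G}{\partial\lambda_i}\lambda_i^2=\sum_iG_{ii}\lambda_i^2$ (invariantly $G_{ij}D_{jk}uD_{ki}u$), and uniform ellipticity gives $\theta_1\sum_i\lambda_i^2\le\sum_iG_{ii}\lambda_i^2\le\theta_2\sum_i\lambda_i^2$. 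So everything comes down to bounding $\sum_i\lambda_i^2=|D^2u|^2$ from above and below. For this I would use $D^2u=v\,[b_{ij}]\,\mathcal{A}\,[b_{ij}]$ from \eqref{e2.2.13}, where $\mathcal{A}=[a_{ij}]$ has eigenvalues $\kappa_1,\dots,\kappa_n$: since conjugation by $[b_{ij}]$ — whose eigenvalues are pinched between two positive constants — and multiplication by $v$ change the Frobenius norm only by a controlled factor, $|D^2u|^2$ is comparable to $|\mathcal{A}|^2=\sum_i\kappa_i^2$. Finally, because $\partial F/\partial\kappa_i=1$, Lemma~\ref{e3.2.2} yields $\Lambda_3\le\sum_i\kappa_i^2=\sum_i\frac{\partial F}{\partial\kappa_i}\kappa_i^2\le\Lambda_4$, and chaining these comparisons produces $\Lambda_6,\Lambda_7$ as required (the same chain, run only on the side not involving $\sum\kappa_i^2$, re-proves \eqref{e3.4}).

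These computations are all routine; the only point needing a little care is the comparison $|D^2u|^2\asymp|\mathcal{A}|^2$ under conjugation by $[b_{ij}]$, and more generally keeping explicit track that every comparison constant depends only on the known data $\Omega,\tilde\Omega$ — which is automatic here since $\sup_\Omega|Du|$ is controlled once $Du(\Omega)=\tilde\Omega\subset\subset B_1(0)$. Because $u$ is assumed uniformly convex, all $\lambda_i$ and $\kappa_i$ are positive, so $\mathcal{A}\in\bar{\Gamma^+_n}$ and Lemmas~\ref{L2.1} and \ref{e3.2.2} apply verbatim; no further input is needed.
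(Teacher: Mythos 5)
Your argument is correct and follows exactly the route the paper intends: the corollary is stated as a consequence of the Section 2 structure results, namely $F_{kl}=\delta_{kl}$ so that $[G_{ij}]=\tfrac1v[b^{ik}][b^{kj}]$ is uniformly elliptic with constants controlled by $\sup_\Omega|Du|$ (via \eqref{e1.1.4} and $\tilde\Omega\subset\subset B_1(0)$), combined with \eqref{e2.14} and Lemma \ref{e3.2.2}. Your write-up simply fills in the details the paper leaves implicit (in particular the comparison $|D^2u|^2\asymp\sum_i\kappa_i^2$ via $D^2u=v\,b\,\mathcal{A}\,b$), and the constants indeed depend only on $\Omega$ and $\tilde\Omega$ as required.
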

 Gathering the results obtained above we arrive at the following structural conditions for the operator $\tilde{G}$.
\begin{Corollary}\label{c3.4}
 Assume that $\Omega$, $\tilde{\Omega}$ are bounded, uniformly convex domains with smooth boundary in $\mathbb{R}^{n}$
 and  $\tilde{\Omega}\subset\subset B_{1}(0)$. Suppose that $u \in C^2(\overline{\Omega})$ is a uniformly convex solution of \eqref{e1.1.3} and  \eqref{e1.1.4}.
Then there exists uniformly  positive constants  $\Lambda_6,\Lambda_7$ which depending on the known data, such that there holds
\begin{equation}\label{e3..3.3}
\Lambda_6\leq \sum^{n}_{i=1}\frac{\partial \tilde{G}}{\partial \mu_{i}}\leq \Lambda_7,
\end{equation}
\begin{equation}\label{e3..3.4}
\Lambda_6\leq \sum^{n}_{i=1}\frac{\partial \tilde{G}}{\partial \mu_{i}}\mu^{2}_{i}\leq \Lambda_7,
\end{equation}
where $\mu_1,\cdots, \mu_n$ are the eigenvalues of the Hessian matrix $D^2 \tilde{u}$ at $x \in \Omega$.
\end{Corollary}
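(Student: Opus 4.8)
The plan is to obtain Corollary~\ref{c3.4} as the Legendre--dual counterpart of Corollary~\ref{c3.3}. The mechanism is that, once (i) the transformed operator $\tilde{F}$ is shown to satisfy the two-sided bounds of Lemma~\ref{e3.2.2} with constants depending only on $\Omega$ and $\tilde{\Omega}$, and (ii) the geometric conjugation factors $\tilde{v}=\sqrt{1-|y|^2}$, $\tilde{b}_{ij}$ and $\tilde{b}^{ij}$ are pinched between positive constants $\sigma_3,\sigma_4$ --- which is exactly what $\tilde{\Omega}\subset\subset B_1(0)$ buys us, keeping $y$ away from the degenerate boundary of $B_1(0)$ --- the four inequalities in the Corollary follow from the identity $\tilde{G}_{ij}=\sqrt{1-|y|^2}\,\tilde{b}_{ik}\tilde{F}_{kl}\tilde{b}_{lj}$ together with \eqref{e2.2.19}, by precisely the computation that produces \eqref{e3.4}--\eqref{e3.5} for $G$.

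First I would verify (i). Since $\tilde{F}(\eta_1,\dots,\eta_n)=-F(\eta_1^{-1},\dots,\eta_n^{-1})=-\sum_i\eta_i^{-1}$, one has $\tilde{F}_i:=\partial\tilde{F}/\partial\eta_i=\eta_i^{-2}>0$, hence $\sum_i\tilde{F}_i=\sum_i\eta_i^{-2}$ while $\sum_i\tilde{F}_i\,\eta_i^{2}=n$. The transformed equation $\tilde{F}[\tilde{a}_{ij}]=-c$ reads $\sum_i\eta_i^{-1}=c$, and $\Lambda_1\le c\le\Lambda_2$ by Lemma~\ref{L2.1}; combined with the Cauchy inequality $\tfrac1n\big(\sum_i\eta_i^{-1}\big)^{2}\le\sum_i\eta_i^{-2}\le\big(\sum_i\eta_i^{-1}\big)^{2}$ this yields $\Lambda_1^{2}/n\le\sum_i\tilde{F}_i\le\Lambda_2^{2}$, while $\sum_i\tilde{F}_i\,\eta_i^{2}=n$ needs nothing further. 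Thus $\tilde{F}$ obeys the hypotheses of Lemma~\ref{e3.2.2} with uniform constants; I would stress that this uses only the equation and Lemma~\ref{L2.1}, no pointwise control of $D^2\tilde{u}$ --- the linearity of $F$ is what turns the equation into a bound on the weighted traces of $\tilde{F}$.

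Next I would transfer these bounds to $\tilde{G}$, interpreting $\partial\tilde{G}/\partial\mu_i$ as the $i$-th diagonal entry of $[\tilde{G}_{ij}]$ in a frame diagonalizing $D^2\tilde{u}$ (as in Corollary~\ref{c3.3}), so that $\sum_i\partial\tilde{G}/\partial\mu_i=\mathrm{tr}[\tilde{G}_{ij}]=\tilde{\mathcal{T}}_{\tilde{G}}$ and $\sum_i(\partial\tilde{G}/\partial\mu_i)\mu_i^{2}=\mathrm{tr}\big([\tilde{G}_{ij}](D^2\tilde{u})^{2}\big)$, both traces being orthogonally invariant. For the first pair, \eqref{e2.2.19} gives $\sigma_3\tilde{\mathcal{T}}\le\tilde{\mathcal{T}}_{\tilde{G}}\le\sigma_4\tilde{\mathcal{T}}$ with $\tilde{\mathcal{T}}=\sum_i\tilde{F}_{ii}=\sum_i\eta_i^{-2}$, which is pinched by step (i); this is \eqref{e3..3.3}. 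For \eqref{e3..3.4} I would eliminate $D^2\tilde{u}$ in favour of $[\tilde{a}_{ij}]$ via $[\tilde{a}_{ij}]=\sqrt{1-|y|^2}\,\tilde{b}_{ik}D_{kl}\tilde{u}\,\tilde{b}_{lj}$ and cyclically reorganize the trace; because $[\tilde{F}_{ij}]$ and $[\tilde{a}_{ij}]$ are simultaneously diagonal, the weights $\eta_i$ cancel against $\tilde{F}_i=\eta_i^{-2}$ (Euler's relation for the $(-1)$-homogeneous $\tilde{F}$), and $\mathrm{tr}\big([\tilde{G}_{ij}](D^2\tilde{u})^{2}\big)$ collapses to $\tilde{v}^{-1}\,\mathrm{tr}\big(([\tilde{b}^{ij}])^{2}\big)$, manifestly pinched by the bounds on $\tilde{v}$ and on the eigenvalues of $[\tilde{b}^{ij}]$. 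Taking $\Lambda_6,\Lambda_7$ to be the extreme constants generated along the way completes the proof.

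The step I expect to need the most care is \eqref{e3..3.4}, precisely because it must be carried out without ever invoking an a priori bound on $D^2\tilde{u}$: such a bound would smuggle in the convexity constant of the particular solution rather than the known data, so the potentially large direction $\mu_{\max}$ has to disappear through the cancellation above, not be estimated. The subsidiary nuisance is that $[\tilde{b}_{ij}]$ and $D^2\tilde{u}$ are not simultaneously diagonalizable, so the bookkeeping is cleanest through the orthogonal invariance of the traces together with $\sigma_3 I\le[\tilde{b}_{ij}]\le\sigma_4 I$, exactly as in the proof of Corollary~\ref{c3.3}; everything else is the algebra of the Legendre transform already assembled in Section~2.
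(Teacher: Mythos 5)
Your proposal is correct and follows essentially the argument the paper intends: the paper states Corollary 3.4 without a separate proof, assembling it from exactly the ingredients you verify — $\tilde{F}=-\sum_i\eta_i^{-1}$ satisfies the Lemma 2.2-type bounds (via Lemma 2.1 and Cauchy's inequality), the identity $\tilde{G}_{ij}=\sqrt{1-|y|^2}\,\tilde{b}_{ik}\tilde{F}_{kl}\tilde{b}_{lj}$, and the pinching of $\tilde{v}$, $[\tilde{b}_{ij}]$, $[\tilde{b}^{ij}]$ coming from $\tilde{\Omega}\subset\subset B_1(0)$, as recorded in \eqref{e2.2.19}. Your explicit trace computation for \eqref{e3..3.4}, using $\tilde{F}_i\eta_i^{2}=1$ to cancel the eigenvalues of $D^2\tilde{u}$, is a correct and somewhat more detailed rendering of what the paper summarizes as "gathering the results obtained above."
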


From \cite{Ju1} and Proposition A.1 in \cite{SM}, we give the following
\begin{deff}
A smooth function $h:\mathbb{R}^n\rightarrow\mathbb{R}$ is called the defining function of $\tilde{\Omega}$,
 if
$$\tilde{\Omega}=\{p\in\mathbb{R}^{n} : h(p)>0\},\quad |Dh|_{{\partial\tilde{\Omega}}}=1,$$
and there exists $\theta>0$ such that for any $p=(p_{1},\cdots, p_{n})\in \tilde{\Omega}$ and $\xi=(\xi_{1}, \cdots, \xi_{n})\in \mathbb{R}^{n}$,
$$\frac{\partial^{2}h}{\partial p_{i}\partial p_{j}}\xi_{i}\xi_{j}\leq -\theta|\xi|^{2}.$$
\end{deff}
Therefore, the diffeomorphism condition $Du(\Omega) =\tilde\Omega$ in (\ref{e1.1.4}) is equivalent to
\begin{align}
    h(Du)=0,\quad x\in\Omega.
\end{align}

And then the mean curvature equation (\ref{e1.1.3})-(\ref{e1.1.4}) is equivalent to the following elliptic problem
\begin{equation}\label{e3.3.4}
\begin{cases}
G(Du,D^2u)=c, \quad x\in \Omega,\\
\qquad\;\;  h(Du)=0,\quad x\in \partial\Omega.
 \end{cases}
\end{equation}
\begin{lemma}$($See Lemma 3.4 in \cite{RS}$)$\label{l3.30}
Assume that  $[A_{ij}]$ is semi-positive real symmetric matrix and $[B_{ij}]$, $[C_{ij}]$ are two real symmetric matrices. Then
$$2A_{ij}B_{jk}C_{ki}\leq A_{ij}B_{ik}B_{jk}+A_{ij}C_{ik}C_{jk}.$$
\end{lemma}
According to the proof in \cite{Ju2}, we can verify the oblique boundary condition.
\begin{lemma}\label{lem3.5}$($See Lemma 3.1 in \cite{OC}$)$
    If $u$ is a  smooth uniformly convex solution of $\eqref{e1.1.3}$ and $\eqref{e1.1.4}$ , and then the boundary condition is oblique, i.e.,
    \begin{align}\label{oblique equation}
        \langle \nu(x),\tilde{\nu}(Du(x))\rangle\geq0,\quad x\in\partial \Omega,
    \end{align}
    where $\nu$ and $\tilde{\nu}$ denote the inner unit normals of $\Omega$ and $\tilde{\Omega}$.
\end{lemma}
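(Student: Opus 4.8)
The plan is to adapt the classical obliqueness argument of Urbas \cite{Ju2} to the reformulated problem \eqref{e3.3.4}. Let $h$ be the defining function of $\tilde{\Omega}$, so that $\tilde{\Omega}=\{h>0\}$, $|Dh|=1$ on $\partial\tilde{\Omega}$, and set
\[
\varphi(x):=h(Du(x)),\qquad x\in\overline{\Omega}.
\]
By the boundary condition \eqref{e1.1.4} we have $Du(x)\in\tilde{\Omega}$ for $x\in\Omega$, hence $\varphi>0$ in $\Omega$, while the equivalent formulation in \eqref{e3.3.4} gives $\varphi\equiv 0$ on $\partial\Omega$. Thus $\varphi$ is a nonnegative function on $\overline{\Omega}$ whose zero set is exactly $\partial\Omega$.

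Fix $x_0\in\partial\Omega$. Since $\varphi$ vanishes identically along $\partial\Omega$, all of its tangential derivatives at $x_0$ vanish, so $D\varphi(x_0)$ is a scalar multiple of the inner unit normal $\nu(x_0)$; and since $\varphi\geq 0=\varphi(x_0)$ with $\varphi>0$ in $\Omega$, that multiple is nonnegative:
\[
D\varphi(x_0)=\lambda\,\nu(x_0),\qquad \lambda:=\frac{\partial\varphi}{\partial\nu}(x_0)\geq 0 .
\]
On the other hand, the chain rule gives $D_i\varphi(x_0)=D_kh(Du(x_0))\,D_{ki}u(x_0)$, and because $Du(x_0)\in\partial\tilde{\Omega}$, where $h>0$ on the interior side and $|Dh|=1$, the inner unit normal of $\tilde{\Omega}$ at $Du(x_0)$ is precisely $\tilde{\nu}(Du(x_0))=Dh(Du(x_0))$. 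Using that $D^2u$ is symmetric, these two identities combine to
\[
D^2u(x_0)\,\tilde{\nu}(Du(x_0))=\lambda\,\nu(x_0).
\]

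Because $u$ is uniformly convex, $D^2u(x_0)$ is positive definite, hence invertible with positive definite inverse. Solving the last relation for $\tilde{\nu}(Du(x_0))$ and pairing with $\nu(x_0)$ yields
\[
\langle\nu(x_0),\tilde{\nu}(Du(x_0))\rangle=\lambda\,\big\langle[D^2u(x_0)]^{-1}\nu(x_0),\,\nu(x_0)\big\rangle\geq 0,
\]
which is exactly \eqref{oblique equation}. As a consistency check one may run the symmetric computation for the Legendre transform $\tilde{u}$ on $\tilde{\Omega}$ with the defining function of $\Omega$, obtaining the same inequality in the form $\langle\nu,\tilde{\nu}\rangle=\tilde{\lambda}\,\langle[D^2\tilde{u}]^{-1}\tilde{\nu},\tilde{\nu}\rangle$, but this is not needed.

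The only slightly delicate point is the identification $D\varphi(x_0)=\lambda\,\nu(x_0)$ with $\lambda\geq 0$: it rests on $\varphi$ attaining its minimum precisely along $\partial\Omega$, i.e.\ on the diffeomorphism property $Du(\Omega)=\tilde{\Omega}$ already encoded in the passage to \eqref{e3.3.4}. I do not anticipate a genuine obstacle for the stated (non-strict) inequality; the truly hard part of the program, namely upgrading this to a quantitative \emph{strict} obliqueness estimate, is carried out later using Lemma \ref{l3.30} together with the structural bounds of Corollaries \ref{c3.3} and \ref{c3.4}.
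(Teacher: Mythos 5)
Your proof is correct and is essentially the standard obliqueness argument that the paper itself defers to (it cites Lemma 3.1 of \cite{OC} and the proof in \cite{Ju2} rather than arguing in-text): the function $h(Du)$ vanishes on $\partial\Omega$ and is positive inside, so $D\bigl(h(Du)\bigr)=D^2u\,\tilde{\nu}(Du)$ is a nonnegative multiple of $\nu$, and pairing with $[D^2u]^{-1}$ gives $\langle\nu,\tilde{\nu}\rangle\geq 0$. In fact, since $|\tilde{\nu}|=1$ forces the multiplier $\lambda$ to be strictly positive, your computation even yields pointwise strict positivity, consistent with the identity $\langle\beta,\nu\rangle=\sqrt{h_{p_k}h_{p_l}u_{kl}\,u^{ij}\nu_i\nu_j}$ used later in the paper.
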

For the convenience, we denote $\beta=(\beta^{1}, \cdots, \beta^{n})$ with $\beta^{i}:=h_{p_{i}}(Du)$, and $\nu=(\nu_{1},\cdots,\nu_{n})$ as the unit inward normal vector at $x\in\partial\Omega$. The expression of the inner product is
\begin{equation*}
\langle\beta, \nu\rangle=\beta^{i}\nu_{i}.
\end{equation*}
\begin{lemma}\label{llll3.4}
  Assume that $\Omega$, $\tilde{\Omega}$ are bounded, uniformly convex domains with smooth boundary in $\mathbb{R}^{n}$ and  $\tilde{\Omega}\subset\subset B_1(0)$ . If $u$ is a strictly convex solution to $(\ref{e3.3.4})$, then the strict obliqueness estimate
\begin{equation}\label{e3.3.12}
\langle\beta, \nu\rangle\geq \frac{1}{C_1}>0
\end{equation}
holds on $\partial \Omega$ for some universal constant $C_1$, which depends only on $\Omega,$ $\tilde{\Omega}$.
\end{lemma}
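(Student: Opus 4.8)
The plan is to follow the strategy of \cite{Ju2}: construct a globally defined auxiliary function that controls $Du$ near $\partial\Omega$, derive a differential inequality for it, and then combine a barrier argument with an inspection of the boundary point where $\langle\beta,\nu\rangle$ is smallest.

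Let $h$ and $\tilde h$ be defining functions of $\tilde\Omega$ and $\Omega$ respectively (in the sense of the Definition above, $\Omega$ playing the role of $\tilde\Omega$ in the second case), and set $w=h(Du)$ on $\overline\Omega$. Then $w>0$ in $\Omega$, $w=0$ on $\partial\Omega$, and since the tangential derivatives of $w$ vanish on $\partial\Omega$ we have $D^2u\cdot\beta=w_\nu\,\nu$ there, with $\nu=D\tilde h$. Differentiating \eqref{e1.1.3} once, using $G_{ij}D_{ijk}u+G_iD_{ik}u=0$ and the uniform concavity $D^2h\le-\theta I$, gives
\[
G_{ij}D_{ij}w+G_iD_iw=h_{p_kp_l}(Du)\,D_{ik}u\,G_{ij}\,D_{jl}u\le-\theta\sum_{i=1}^n\frac{\partial G}{\partial\lambda_i}\lambda_i^2\le-\theta\Lambda_6<0
\]
by \eqref{e3.5}. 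Because $F$ is linear one has $F_{kl}=\delta_{kl}$ and $G_{ij}=v^{-1}g^{ij}$, so $G$ is uniformly elliptic, while $\mathcal T_G$ and $G_i$ are bounded; hence, for $B$ and then $A$ large, $\phi=A(1-e^{-B\tilde h})$ is a supersolution of $G_{ij}D_{ij}+G_iD_i$ in a collar $\{0<\tilde h<\rho\}$ with $\phi=w=0$ on $\partial\Omega$ and $\phi\ge w$ on $\{\tilde h=\rho\}$. Comparison gives $w\le\phi$, hence $w_\nu\le AB$ on $\partial\Omega$. Running the same argument for the Legendre-dual problem \eqref{transfoemation G} on $\tilde\Omega$ (using Corollary \ref{c3.4}) bounds $\tilde w_{\tilde\nu}$ on $\partial\tilde\Omega$, where $\tilde w=\tilde h(D\tilde u)$, and the identity $w_\nu\,\tilde w_{\tilde\nu}=1$ — immediate from $D^2u\cdot\beta=w_\nu\nu$, $(D^2u)^{-1}\nu=\tilde w_{\tilde\nu}\beta$, and $D^2\tilde u=(D^2u)^{-1}$ — upgrades this to $c_1\le w_\nu\le c_2$ on $\partial\Omega$, and symmetrically on $\partial\tilde\Omega$; in particular $w_\nu>0$.

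Now let $m=\min_{\partial\Omega}\langle\beta,\nu\rangle$, attained at $x_0$; then $m>0$ by Lemma \ref{lem3.5}, and it suffices to bound $m$ from below. Since on $\partial\Omega$ one has $\beta(x)=\tilde\nu(Du(x))$ and $\nu(x)=\tilde\beta(Du(x))$, where $\tilde\beta^i=\tilde h_{p_i}(D\tilde u)$, the function $x\mapsto\langle\beta,\nu\rangle$ equals $\langle\tilde\nu,\tilde\beta\rangle\circ Du$, and since $Du$ carries $\partial\Omega$ diffeomorphically onto $\partial\tilde\Omega$ the point $y_0=Du(x_0)$ realizes the same minimum for the dual problem. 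In coordinates at $x_0$ with $\nu(x_0)=e_n$, splitting $D^2u(x_0)$ into its tangential block $R'$, mixed column $r$, and $(n,n)$-entry, the relation $D^2u(x_0)\beta=w_\nu e_n$ forces $\beta=\bigl(-m\,R'^{-1}r,\,m\bigr)$ and $m^2\bigl(1+|R'^{-1}r|^2\bigr)=1$, so $m\ge 1/C_1$ is equivalent to an upper bound on the coupling $|R'^{-1}r|$. One obtains such a bound by differentiating the boundary identity $h(Du)=0$ twice along $\partial\Omega$ at $x_0$, eliminating the third derivatives of $u$ via $G_{ij}D_{ijk}u=-G_iD_{ik}u$, and then inserting: the bounds $c_1\le w_\nu\le c_2$ from the previous step; the trace of $G(Du(x_0),D^2u(x_0))=c$ with $\Lambda_1\le c\le\Lambda_2$; the uniform convexity of $\partial\Omega$, and of $\partial\tilde\Omega$ at $y_0$; the concavity $h_{pp}\le-\theta I$, $\tilde h_{pp}\le-\tilde\theta I$; the structural bounds of Corollaries \ref{c3.3}--\ref{c3.4}; and the elementary matrix inequality of Lemma \ref{l3.30} (applied with $[A_{ij}]=[G_{ij}]$) to absorb the mixed terms, using throughout the parallel information at $y_0$. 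This yields $|R'^{-1}r|\le C_1$, i.e.\ \eqref{e3.3.12}.

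The main obstacle is the last step: the algebraic combination of the first- and second-order minimality conditions with the equation that produces the quantitative bound on $|R'^{-1}r|$. This is precisely where the uniform convexity of \emph{both} $\Omega$ and $\tilde\Omega$ enters essentially — without convexity of $\tilde\Omega$, $h$ admits no concave defining function, and then both the sign in the differential inequality for $w$ and the positivity of the boundary second fundamental form at $y_0$ are lost. Working simultaneously at $x_0$ and at the corresponding point $y_0$, and exploiting the Legendre symmetry $G\leftrightarrow\tilde G$, is what keeps the bookkeeping manageable.
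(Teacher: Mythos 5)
The decisive part of your argument is not actually carried out. Your final step reduces \eqref{e3.3.12} at the minimum point $x_0$ (with $\nu(x_0)=e_n$, $m=\langle\beta,\nu\rangle(x_0)$) to an upper bound on $|R'^{-1}r|$, and then asserts that this bound follows from differentiating $h(Du)=0$ twice, eliminating third derivatives, and ``inserting'' a list of known estimates; you yourself call this ``the main obstacle.'' That algebraic step is precisely the heart of the lemma and no derivation is given, so as written the proof is incomplete. For comparison, the paper obtains the crux quantitatively by working with $\omega=\langle\beta,\nu\rangle+\tau h(Du)$ at $x_0$, proving the normal--derivative bound \eqref{e3.1.1} via the barrier $\Phi=\omega-\omega(x_0)+\sigma\ln(1+k\tilde h)+A|x-x_0|^2$ and the maximum principle, running the same scheme for the Legendre--dual quantity $\tilde\omega$ to get \eqref{e3.9}, and then combining the two through a case analysis and the identity $\langle\beta,\nu\rangle=\sqrt{h_{p_k}h_{p_l}u_{kl}\,u^{ij}\nu_i\nu_j}$; none of this is reproduced or replaced by a complete alternative in your sketch. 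There is also a direction error in your first part: from $\mathcal{L}w\le-\theta\Lambda_6<0$ (with $w=h(Du)$) you cannot conclude $w\le\phi$ by comparison with a supersolution $\phi$ — for that you need a \emph{lower} bound on $\mathcal{L}w$ (which does hold, since $D^2u$, $h_{pp}$, $G_{ij}$, $G_i$ are bounded, but not from the displayed inequality) together with $\mathcal{L}\phi$ sufficiently negative; the strict negativity of $\mathcal{L}w$ is instead the ingredient for a \emph{lower} bound on $w_\nu$. The same confusion recurs in the dual step, where in addition $\tilde G_{ij}$ is not uniformly elliptic, so the exponential barrier needs the trace bounds of Corollary \ref{c3.4} rather than ellipticity. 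Finally, the pointwise inequality $G_{ij}h_{p_kp_l}u_{ik}u_{jl}\le-\theta\sum_i\frac{\partial G}{\partial\lambda_i}\lambda_i^2$ is not literal since $G_{ij}$ and $D^2u$ need not commute, though it is harmless here because $G_{ij}=v^{-1}g^{ij}$ is uniformly elliptic.

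Ironically, the ingredients you do establish would finish the proof without the block decomposition at all, by a route shorter than the paper's: on $\partial\Omega$ one has $Dw=w_\nu\nu$, hence $\beta=w_\nu\,(D^2u)^{-1}\nu$ and
\begin{equation*}
\langle\beta,\nu\rangle=w_\nu\,u^{ij}\nu_i\nu_j\ \ge\ \frac{w_\nu}{\lambda_{\max}(D^2u)},
\end{equation*}
and the upper bound $D^2u\le C_{14}I_n$ of Lemma \ref{lem4.1} is independent of obliqueness. So a positive lower bound for $w_\nu$ alone suffices, and that follows from your inequality $\mathcal{L}w\le-c_0<0$ by comparing $w$ from below with $\epsilon\tilde h$ (for $\epsilon$ small, $\mathcal{L}(\epsilon\tilde h)\ge-c_0$, so the maximum principle gives $w\ge\epsilon\tilde h$ in $\Omega$ and $w_\nu\ge\epsilon$ on $\partial\Omega$). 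Had you closed the argument this way — or carried out the $|R'^{-1}r|$ estimate in detail — the proof would be a genuinely different and in places simpler route than the paper's; as submitted, the key quantitative step is missing.
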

\begin{proof}
The proof of this is similar in the parts to the proof of \cite{J2}, but it is different from the elliptic operators and the barrier functions which are given in this paper. It follows from the maximum principle according to the structural conditions of the operator $G$, and is proved in the same way as \cite{LL1}.
Define
$$\omega=\langle \beta,\nu\rangle+\tau h(Du),$$
where $\tau$ is a positive constant to be determined. Let $x_0\in \partial\Omega$  such that
$$\langle \beta,\nu\rangle(x_0)=h_{p_k}(Du(x_0))\nu_k(x_0)=\min_{\partial\Omega}\langle \beta,\nu\rangle.$$

By rotation, we may assume that $\nu(x_0)=(0,\cdots,0,1)$. Applying the above assumptions and the boundary condition, we find that
$$\omega(x_0)=\min_{\partial\Omega} \omega=h_{p_{n}}(Du(x_0)).$$

By the smoothness of $\Omega$ and its convexity, we extend $\nu$ smoothly to a tubular neighborhood of $\partial\Omega$ such that in the matrix sense
\begin{equation}\label{e3.3.9}
  \left(\nu_{kl}\right):=\left(D_k\nu_l\right)\leq -\frac{1}{C_{2}}\operatorname{diag} (\underbrace {1,\cdots, 1}_{n-1},0),
\end{equation}
where $C_{2}$ is a positive constant. By Lemma \ref{lem3.5}, we see that $h_{p_{n}}(Du(x_0))\geq0$.

At the minimum point  $x_0$, it yields
\begin{equation}\label{e3.3.14}
 0=\omega_r=h_{p_np_k}u_{kr}+h_{p_k}\nu_{kr}+\tau h_{p_k}u_{kr}, \quad 1\leq r\leq n-1.
\end{equation}
We assume that the following key result
\begin{equation}\label{e3.1.1}
\omega_n(x_0)>-C_{3}
\end{equation}
holds which will be proved later, where $C_{3}$ is a positive constant depending only on $\Omega$ and $\tilde{\Omega}$. We observe that \eqref{e3.1.1} can be rewritten as
\begin{equation}\label{e3.7}
h_{p_np_k}u_{kn}+h_{p_k}\nu_{kn}+ \tau h_{p_k}u_{kn}>-C_{3}.
\end{equation}

Multiplying $h_{p_n}$ on both sides of \eqref{e3.7} and $h_{p_r}$ on both sides of \eqref{e3.3.14} respectively, and summing up together, one gets
\begin{align*}
 \tau h_{p_k}h_{p_l}u_{kl}\geq -C_{3}h_{p_n}- h_{p_k}h_{p_l}\nu_{kl}- h_{p_k}h_{p_np_l}u_{kl}.
\end{align*}

Combining (\ref{e3.3.9}) with
$$ 1\leq r\leq n-1,\quad h_{p_k}u_{kr}=\frac{\partial h(Du)}{\partial x_r}=0,\quad h_{p_k}u_{kn}=\frac{\partial h(Du)}{\partial x_n}\geq 0,\quad -h_{p_np_n}\geq 0,$$
we have
$$\tau h_{p_k}h_{p_l}u_{kl}\geq-C_{3}h_{p_n}+\frac{1}{C_{2}}|Dh|^2-\frac{1}{C_{2}}h^2_{p_n}
\geq-C_{4}h_{p_n}+\frac{1}{C_{4}}-\frac{1}{C_{4}}h^2_{p_n},$$
where $C_{4}=\max\{C_{2},C_{3}\}$.
Now, to obtain the estimate $\langle \beta,\nu\rangle$, we divide $-C_{4}h_{p_n}+\frac{1}{C_{4}}-\frac{1}{C_{4}}h^2_{p_n}$ into two cases at $x_0$.

Case (i).  If
$$-C_{4}h_{p_n}+\frac{1}{C_{4}}-\frac{1}{C_{4}}h^2_{p_n}\leq \frac{1}{2C_{4}},$$
then
$$h_{p_k}(Du)\nu_{k}=h_{p_n}\geq \sqrt{\frac{1}{2}+\frac{C^{4}_{4}}{4}}-\frac{C^{2}_{4}}{2}.$$
It means that there is a uniform positive lower bound for $\underset{\partial\Omega}\min \langle \beta,\nu\rangle$.

Case (ii). If
$$-C_{4}h_{p_n}+\frac{1}{C_{4}}-\frac{1}{C_{4}}h^2_{p_n}> \frac{1}{2C_{4}},$$
then we know that there is a positive lower bound for $h_{p_k}h_{p_l}u_{kl}$.

The unit inward normal vector of $\partial\Omega$ can be expressed by $\nu=D\tilde{h}$. For the same reason, $\tilde{\nu}=Dh$, where $\tilde{\nu}=(\tilde{\nu}_{1}, \tilde{\nu}_{2},\cdots,\tilde{\nu}_{n})$ is the unit inward normal vector of $\partial\tilde{\Omega}$.  $\tilde{h}$ is the defining function of $\Omega$ . That is,
$$\Omega=\{\tilde{p}\in\mathbb{R}^{n} : \tilde{h}(\tilde{p})>0\},\ \ \ |D\tilde{h}|_{\partial\Omega}=1, \ \ \ D^2\tilde{h}\leq -\tilde{\theta}I,$$
where $\tilde{\theta}$ is some positive constant.

Let \begin{align*}
    \tilde{\beta}=(\tilde{\beta}^{1}, \cdots, \tilde{\beta}^{n}),\quad \tilde{\beta}^{k}:=\tilde{h}_{p_{k}}(D\tilde{u}),
\end{align*}
using the representation as the works of \cite{HRY} and \cite{OK},
we also define
$$\tilde{\omega}=\langle\tilde{\beta}, \tilde{\nu}\rangle+\tilde{\tau} \tilde{h}(D\tilde{u}),$$
in which
$$\langle\tilde{\beta}, \tilde{\nu}\rangle=\langle\beta, \nu\rangle,$$
and $\tilde{\tau}$ is a  positive constant to be determined.

Denote $y_{0}=Du(x_0)$, then $$\tilde{\omega}(y_{0})=\omega(x_{0})=\min_{\partial\tilde{\Omega}} \tilde{\omega}.$$

Using the same methods, under the assumption of
\begin{equation}\label{e3.9}
\tilde{\omega}_{n}(y_{0})\geq -C_{5},
\end{equation}
we obtain the positive lower bounds of $\tilde{h}_{p_{k}}\tilde{h}_{p_{l}}\tilde{u}_{kl}$ or
$$h_{p_{k}}(Du)\nu_{k}=\tilde{h}_{p_{k}}(D\tilde{u})\tilde{\nu}_{k}=\tilde{h}_{p_{n}}\geq
\sqrt{\frac{1}{2}+\frac{C^{4}_{6}}{4}}-\frac{C^{2}_{6}}{2}.$$

On the other hand, it is easy to check that
$$\tilde{h}_{p_{k}}\tilde{h}_{p_{l}}\tilde{u}_{kl}=\nu_{i}\nu_{j}u^{ij}.$$

Then by the positive lower bounds of $h_{p_{k}}h_{p_{l}}u_{kl}$ and $\tilde{h}_{p_{k}}\tilde{h}_{p_{l}}\tilde{u}_{kl}$, the desired conclusion can be obtained by
\begin{align*}
\langle \beta,\nu\rangle=\sqrt{h_{p_k}h_{p_l}u_{kl}u^{ij}\nu_i\nu_j}.
\end{align*}

For details of the proof of the above formula, the readers can refer to \cite{Ju2} or \cite{J2}.
It remains to prove the key estimates \eqref{e3.1.1} and \eqref{e3.9}.

At first we give the proof of \eqref{e3.1.1}. By (\ref{e3.3.4}), Corollary \ref{c3.3} and Lemma \ref{l3.30}, we have
\begin{equation*}\label{e3.10}
 \begin{aligned}
\mathcal{L}\omega=&G_{ij}u_{il}u_{jm}(h_{p_{k}p_{l}p_{m}}\nu_{k}+\tau h_{p_{l}p_{m}})\\
&+2G_{ij}h_{p_{k}p_{l}}u_{li}\nu_{kj}+G_{ij}h_{p_{k}}\nu_{kij}+G_{i}h_{p_{k}}\nu_{ki}\\
\leq& (h_{p_{k}p_{l}p_{m}}\nu_{k}+\tau h_{p_{l}p_{m}}+\delta_{lm})G_{ij}u_{il}u_{jm}+C_{7}\mathcal{T}_{G}+C_{8},
 \end{aligned}
\end{equation*}
where $\mathcal{L}=G_{ij}\partial_{ij}+G_{i}\partial_{i}$ and
$$2G_{ij}h_{p_{k}p_{l}}u_{li}\nu_{kj}\leq  G_{ij}u_{im}u_{mj}+C_{7}\mathcal{T}_{G}.$$

Since $D^{2}h\leq-\theta I$, we may choose $\tau$ large enough depending on the known data such that
$$(h_{p_{k}p_{l}p_{m}}\nu_{k}+\tau h_{p_{l}p_{m}}+\delta_{lm})<0.$$

Consequently, we deduce that
\begin{equation}\label{e3.11}
\mathcal{L}\omega\leq C_{9}\mathcal{T}_{G} \    \ in \   \ \Omega,
\end{equation}
by the convexity of $u$.

Using the method of construction in \cite{RS}, we denote a neighborhood of $x_{0}$ in $\Omega$ by
$$\Omega_{r}:=\Omega\cap B_{r}(x_{0}),$$
where $r$ is a positive constant such that $\nu$ is well defined in $\Omega_{r}$. In order to obtain the desired results, it suffices to consider the auxiliary function
$$\Phi(x)=\omega(x)-\omega(x_{0})+\sigma\ln(1+k\tilde{h}(x))+A|x-x_{0}|^{2},$$
where $\sigma$, $k$ and $A$ are positive constants to be determined.
By noting that $\tilde{h}$ is the defining function of $\Omega$ and $G_{i}$ is bounded, we show that
\begin{equation}\label{e3.12}
 \begin{aligned}
  \mathcal{L}(\ln(1+k\tilde{h}))&=G_{ij}\left(\frac{k\tilde{h}_{ij}}{1+k\tilde{h}}
  -\frac{k\tilde{h}_{i}}{1+k\tilde{h}}\frac{k\tilde{h}_{j}}{1+k\tilde{h}}\right)
  +G_{i}\frac{k\tilde{h}_{i}}{1+k\tilde{h}}\\
  &\triangleq G_{ij}\frac{k\tilde{h}_{ij}}{1+k\tilde{h}}-G_{ij}\eta_{i}\eta_{j}+G_{i}\eta_{i}\\
  &\leq\left(-\frac{k\tilde{\theta}}{1+k\tilde{h}}+C_{10}-C_{11}|\eta-C_{12}I|^{2}\right)\mathcal{T}_{G}\\
  &\leq\left(-\frac{k\tilde{\theta}}{1+k\tilde{h}}+C_{10}\right)\mathcal{T}_{G},
  \end{aligned}
\end{equation}
where $\eta=\left(\frac{k\tilde{h}_{1}}{1+k\tilde{h}}, \frac{k\tilde{h}_{2}}{1+k\tilde{h}},\cdots, \frac{k\bar{h}_{n}}{1+k\tilde{h}}\right)$.

By taking $r$ to be small enough, we have
\begin{equation}\label{e3.13}
 \begin{aligned}
0\leq \tilde{h}(x)&=\tilde{h}(x)-\tilde{h}(x_{0})\\
&\leq \sup_{\Omega_{r}}|D\tilde{h}||x-x_{0}|\\
&\leq r\sup_{\Omega}|D\tilde{h}|\leq \frac{\tilde{\theta}}{3C_{10}}.
 \end{aligned}
\end{equation}

By choosing $k=\frac{7C_{10}}{\tilde{\theta}}$ and applying (\ref{e3.13}) to (\ref{e3.12}), we obtain
\begin{equation}\label{e3.14}
  \mathcal{L}(\ln(1+k\tilde{h}))
  \leq -C_{10}\mathcal{T}_{G}.
\end{equation}

Combining (\ref{e3.11}) with (\ref{e3.14}), a direct computation yields
\begin{equation*}
\mathcal{L}(\Phi(x))\leq (C_{9}-\sigma C_{10}+2A)\mathcal{T}_{G}.
\end{equation*}
It is clear that $\Phi(x)\geq0$ on $\partial\Omega$.
Because $\omega$ is bounded, then it follows that we can choose $A$ large enough depending on the known data such that on $\Omega\cap\partial B_{r}(x_{0})$,
\begin{equation*}
 \begin{aligned}
  \Phi(x)&=\omega(x)-\omega(x_0)+\sigma\ln(1+kh^{\ast})+Ar^{2}\\
  &\geq\omega(x)-\omega x_0+Ar^{2}\geq 0.
  \end{aligned}
\end{equation*}
The above argument implies that
\begin{equation}\label{e3.15aaa}
  \Phi(x)\geq 0, \,\, \quad \text{for} \quad x \in \partial\Omega_{r}.
\end{equation}

By taking
$$\sigma=\frac{C_{9}+2A}{C_{10}},$$

\begin{equation}\label{e3.15}
   \mathcal{L}\Phi\leq 0,\,\,  x\in\Omega_{r}.
\end{equation}

According to the maximum principle, we get that
\begin{equation}\label{e3.15aaaa}
\Phi(x)|_{\Omega_{r}}\geq 0.
\end{equation}

By using the maximum principle in (\ref{e3.15}),
it follows from (\ref{e3.15aaa}) and (\ref{e3.15aaaa}) that
$$\Phi|_{\Omega_{r}}\geq \min_{\partial\Omega_{r}}\Phi\geq 0.$$

By the above inequality and $\Phi(x_0)=0$, we have $\partial_n\Phi(x_0)\geq 0$, which gives the desired estimate \eqref{e3.1.1}.

Finally, we turn to the proof of (\ref{e3.9}). The proof of (\ref{e3.9}) is similar to that of (\ref{e3.1.1}).
Define the elliptic operator
$$\mathcal{\tilde{L}}=\tilde{G}_{ij}\partial_{ij}.$$

By calculation, we arrive at
\begin{equation*}
 \begin{aligned}
\mathcal{\tilde{L}}\tilde{\omega}=&\tilde{G}_{ij}\tilde{u}_{li}\tilde{u}_{mj}(\tilde{h}_{q_{k}q_{l}q_{m}}\tilde{\nu}_{k}+\tilde{\tau} \tilde{h}_{q_{l}q_{m}})+2\tilde{G}_{ij}\tilde{h}_{q_{k}q_{l}}\tilde{u}_{li}\tilde{\nu}_{kj}\\
&-\tilde{G}_{y_{k}}(\tilde{h}_{q_{k}q_{m}}\tilde{\nu}_{m}+\tilde{\tau} \tilde{h}_{q_{k}})+\tilde{G}_{ij}\tilde{h}_{q_{k}}\tilde{\nu}_{kij}\\
\leq&(\tilde{h}_{q_{k}q_{l}q_{m}}\tilde{\nu}_{k}+\tilde{\tau} \tilde{h}_{q_{l}q_{m}}+\delta_{lm})\tilde{G}_{ij}\tilde{u}_{il}\tilde{u}_{jm}+C_{11}\mathcal{T}_{\tilde{G}}+C_{12}(1+\tilde{\tau}),
 \end{aligned}
\end{equation*}
where
$$2\tilde{G}_{ij}\tilde{h}_{q_{k}q_{l}}\tilde{u}_{li}\tilde{\nu}_{kj}\leq \delta_{lm}\tilde{G}_{ij}\tilde{u}_{il}\tilde{u}_{jm}
+C_{11}\mathcal{T}_{\tilde{G}},$$
by Lemma 3.6 in \cite{HR1}. Since $D^{2}\tilde{h}\leq-\tilde{\theta}I$, we only need to choose $\tilde{\tau}$ sufficiently large depending on the known data such that
$$\tilde{h}_{q_{k}q_{l}q_{m}}\tilde{\nu}_{k}+\tilde{\tau} \tilde{h}_{q_{l}q_{m}}+\delta_{lm}<0.$$
Therefore,
\begin{equation}\label{e3.16}
\mathcal{\tilde{L}}\tilde{\omega}\leq C_{13}\mathcal{T}_{\tilde{G}},
\end{equation}
by the convexity of $\tilde{u}$.

Denote a neighborhood of $y_{0}$ in $\tilde{\Omega}$ by
$$\tilde{\Omega}_{\rho}:=\tilde{\Omega}\cap B_{\rho}(y_{0}),$$
where $\rho$ is a positive constant such that $\tilde{\nu}$ is well defined in $\tilde{\Omega}_{\rho}$. In order to obtain the desired results,
we  consider the  auxiliary function
$$\Psi(y)=\tilde{\omega}(y)-\tilde{\omega}(y_{0})+\tilde{k}h(y)+\tilde{A}|y-y_{0}|^{2},$$
where $\tilde{k}$ and $\tilde{A}$ are positive constants to be determined. It is easy to check that $\Psi(y)\geq0$ on $\partial\tilde{\Omega}$. Note that $\tilde{\omega}$ is bounded, it follows that we can choose $\tilde{A}$ large enough depending on the known data, such that on $\tilde{\Omega}\cap\partial B_{\rho}(y_{0})$,
\begin{equation*}
  \Psi (y)=\tilde{\omega}(y)-\tilde{\omega}(y_{0})+\tilde{k}h(y)+\tilde{A}\rho^{2}\geq \tilde{\omega}y-\tilde{\omega}(y_{0})
  +\tilde{A}\rho^{2}\geq 0.
\end{equation*}

It follows from (\ref{e3.16}) and $D^{2}h\leq-\theta I$ that
$$\mathcal{\tilde{L}}\Psi\leq(C_{13}-\tilde{k}\theta+2\tilde{A})\mathcal{T}_{\tilde{G}},$$
Let $$\tilde{k}=\frac{2\tilde{A}+C_{13}}{\theta},$$
we consequently have
\begin{align*}
\begin{cases}
   \mathcal{\tilde{L}}\Psi\leq 0,\qquad  y\in\tilde{\Omega}_{\rho},\\
\quad\; \Psi\geq 0,\quad  y\in\partial\tilde{\Omega}_{\rho}.
\end{cases}
\end{align*}

The rest of the proof of (\ref{e3.9}) is the same as (\ref{e3.1.1}). Thus the proof of (\ref{e3.3.12}) is completed.
\end{proof}

\section{The global $C^2$ estimate}
We now proceed to carry out the $C^2$ estimate. The strategy is to construct suitable barrier function and use elliptic
maximum principle to reduce the $ C^2$ global estimates
of $u$ and $\tilde{u}$ to the boundary.
\begin{lemma}\label{lem4.1}
 Assume that $\Omega$, $\tilde{\Omega}$ are bounded, uniformly convex domains with smooth boundary in $\mathbb{R}^{n}$, $\tilde{\Omega}\subset\subset B_{1}(0)$. $0<\alpha<1$, $u\in C^{2+\alpha}(\bar{\Omega})$ is uniformly convex solution to \eqref{e1.1.3} and \eqref{e1.1.4}, then there exists  positive constants $C_{14}$,$C_{15}$,$C_{16}$ depending only on $u_0$, $\Omega$, $\tilde{\Omega}$, such that
\begin{equation}\label{eq3.15}
D^2 u(x) \leq C_{14} I_n,\ \ x\in\bar\Omega
\end{equation}
and
\begin{align}
\label{e4.4.2}
    C_{15}\leq u_{11}+u_{22}+\cdots+u_{nn}\leq C_{16},
\end{align}
where $I_n$ is the $n\times n$ identity matrix.
\end{lemma}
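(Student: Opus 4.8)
The plan is to derive a global bound on $\Delta u$ by applying the maximum principle to an auxiliary function of the form $w = \log(\Delta u) + \phi(|Du|^2)$ (or $w = \eta(x)(u_{\xi\xi}) $ with a cutoff), following the standard Pogorelov–type technique adapted to the operator $G$. Since $F$ is the mean curvature operator $F = \sum_i \kappa_i$, the concavity inequality \eqref{e2.2.12} is available, and the structural bounds \eqref{e3.4}–\eqref{e3.5} from Corollary \ref{c3.3} give two-sided control of $\sum_i G_{ii}$ and $\sum_i G_{ii}\lambda_i^2$ in terms of the known data. First I would differentiate the equation $G(Du,D^2u)=c$ twice in a fixed direction $e_\xi$, obtaining $G_{ij}D_{ij}(u_{\xi\xi}) = -G_{ij,kl}D_{ij\xi}u\, D_{kl\xi}u - (\text{lower order in } G_i)$, and use the concavity term $-G_{ij,kl}D_{ij\xi}u\,D_{kl\xi}u \le 0$ together with Lemma \ref{l3.30} to absorb the cross terms coming from the gradient derivatives $G_i$. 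The spacelike bound $|Du| \le \sup_{\tilde\Omega}|y| < 1$ (from $\tilde\Omega \subset\subset B_1(0)$ and \eqref{e1.1.4}) keeps all the coefficients $b^{ij}$, $v^{-1}$ under control, so the zeroth- and first-order terms contribute only bounded quantities times $\mathcal{T}_G$.

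The interior estimate then reduces, as usual, to a boundary estimate for $D^2u$, and here I would split into tangential–tangential, tangential–normal, and normal–normal second derivatives on $\partial\Omega$. The tangential–tangential bound follows by differentiating the boundary condition $h(Du)=0$ twice along $\partial\Omega$ and using the uniform convexity of $\Omega$ together with the strict obliqueness estimate \eqref{e3.3.12} from Lemma \ref{llll3.4}. The mixed tangential–normal derivatives are controlled by applying the maximum principle to a barrier built from the obliqueness function $\langle\beta,\nu\rangle + \tau h(Du)$, exactly as in the proof of Lemma \ref{llll3.4}, together with the linearized operator $\mathcal{L} = G_{ij}\partial_{ij}+G_i\partial_i$. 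For the double-normal derivative $u_{\nu\nu}$ one uses the equation itself: since $F(\kappa)$ is pinched between $\Lambda_1$ and $\Lambda_2$ by Lemma \ref{L2.1} and the other principal curvatures are already bounded, $u_{\nu\nu}$ cannot blow up, giving $D^2u \le C_{14}I_n$ on $\bar\Omega$. Passing to the Legendre transform $\tilde u$ and repeating the argument with $\tilde G$, using Corollary \ref{c3.4} and the analogous obliqueness estimate, bounds $D^2\tilde u = [D^2u]^{-1}$ from above, i.e. bounds $D^2u$ from below; combined with the upper bound this yields \eqref{e4.4.2}, with $C_{15}$, $C_{16}$ depending only on $\Omega$, $\tilde\Omega$ (and the normalization constant $u_0$).

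The main obstacle I expect is the careful bookkeeping in the interior differentiated inequality: one must show that after using concavity to kill the bad third-derivative term, the remaining first-order contributions from $G_i D_{ik}u$ — which involve $u_i/v^2$ and $b^{ik}$ factors and grow like $|D^2u|$ — can genuinely be dominated by $\epsilon\, G_{ij}(D u_{\xi})_i(Du_\xi)_j$ plus a controlled multiple of $\mathcal{T}_G\,(\Delta u)$, so that the auxiliary function $w$ satisfies $\mathcal{L}w \le C\,\mathcal{T}_G(1+w)$ and the maximum principle applies. This is where the precise forms \eqref{e2.2.14} of $G_{ij}$ and the bound $0 < F_{ij}a_{ij} \le F$ are essential, and where the choice of the function $\phi(|Du|^2)$ in the exponent must be tuned. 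The second delicate point is the mixed-derivative boundary barrier, where obliqueness \eqref{e3.3.12} enters quantitatively; but since that estimate is already in hand, this step is routine modulo the constants.
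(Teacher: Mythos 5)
Your plan would very likely go through: it is the classical Urbas--Schn\"urer scheme (interior reduction by a maximum principle applied to $\log\Delta u+\phi(|Du|^2)$, then tangential, mixed and double-normal boundary estimates via obliqueness and barriers, then a pass through the Legendre transform for the lower bound), and the paper itself runs exactly this machinery --- but only for the dual function $\tilde u$ in Lemmas \ref{lem4.2}--\ref{lem4.7}, where it is genuinely needed. For Lemma \ref{lem4.1} itself the paper's proof is a short linear-algebra observation that makes all of this unnecessary: by Lemma \ref{L2.1} the equation pins the mean curvature between $\Lambda_1$ and $\Lambda_2$, and by \eqref{e2.2.13} the mean curvature equals $\frac{1}{v}b^{ik}u_{kl}b^{li}$, where $1/v$ and the eigenvalues of $[b^{ij}]$ have two-sided bounds because $Du(\Omega)=\tilde\Omega\subset\subset B_1(0)$. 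Since $u$ is convex, $D^2u\ge 0$, so the bound on this weighted trace bounds $u_{11}+\cdots+u_{nn}$ from above and below, and an upper bound on the trace of a nonnegative symmetric matrix bounds the whole matrix; this yields \eqref{eq3.15} and \eqref{e4.4.2} at once, with no differentiated equation, no boundary barriers and no dual problem. You in fact touch this point yourself in your treatment of $u_{\nu\nu}$ (``$F$ is pinched \dots\ so $u_{\nu\nu}$ cannot blow up''): for the mean curvature operator, which is a sum of nonnegative principal curvatures, that same remark applies in every direction simultaneously and collapses the entire argument to the paper's proof.

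Two cautions about your longer route. First, the Legendre-transform step proves a full lower bound on $D^2u$, which is the content of the later Lemma \ref{lem3.6} and is not needed here --- the lower bound in \eqref{e4.4.2} already follows from $F\ge\Lambda_1$; moreover, in the paper's logical order the dual estimates (Lemmas \ref{lem4.3}--\ref{lem4.5}) themselves invoke Lemma \ref{lem4.1}, so importing them into its proof would be circular unless you establish them independently. Second, the interior Pogorelov-type step you flag as the ``main obstacle'' (absorbing the third-derivative contributions coming from the gradient dependence $G_i$) is indeed the delicate part of your scheme, and it is precisely the part that the trace argument lets you skip.
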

\begin{proof}
By the proof of \eqref{e2.2.8} in Lemma \ref{L2.1}, we can show that
$F=\sum_{i=1}^n\kappa_{i}$ is bounded.
From the formula in (\ref{e2.2.13}), we get that
$$\sum_{i=1}^n\kappa_{i}=\sum_{i=1}^n\frac{1}{v}b^{ik}D_{kl}ub^{li}.$$
Then, by using \begin{equation*}
b^{ij}=\delta_{ij}+\frac{D_{i}uD_{j}u}{v(1+v)},\;
|Du|<1,\;
v=\sqrt{1-|Du|^2}
\end{equation*}
and the second boundary condition,   we obtain $D^2 u(x) $ is bounded.

In addition, by Lemma \ref{L2.1}, we know
\begin{align*}
    \Lambda_1\leq\kappa_1+\cdots +\kappa_n\leq \Lambda_2
\end{align*}
and then we obtain \eqref{e4.4.2}.
\end{proof}
In the following,  we derive the positive lower bound of $D^{2}u$.
To obtain  the positive lower bound of $D^{2}u$ on $\partial\Omega$, we consider the Legendre transformation of $u$.
As before, we see that this can in fact be written
\begin{equation*}
\frac{\partial \tilde{u}}{\partial y_{i}}=x_{i},\,\,\frac{\partial ^{2}\tilde{u}}{\partial y_{i}\partial y_{j}}=u^{ij}(x),
\end{equation*}
where $[u^{ij}]=[D^{2}u]^{-1}$.

By using \eqref{transfoemation G} and noting that $\tilde{h}$ is the defining function of $\Omega$, then we know $\tilde{u}$ satisfies
\begin{align}\label{e2.17}
 \begin{cases}
\tilde{G}(y,D^{2}\tilde{u})=-c,\quad\tilde{x}\in \tilde{\Omega}, \\
\qquad\tilde{h}(D\tilde{u})=0,\quad \tilde{x}\in\partial\tilde{\Omega},\\
\end{cases}
\end{align}
where $\tilde{G}(y,D^{2}\tilde{u})=-G(y,D^{2}\tilde{u}^{-1})$.

The following Lemma is to reduce the global $C^2-$ estimates of $\tilde{u}$ to the boundary.

\begin{lemma}\label{lem4.3}
     Let $\tilde{G}=\tilde{G}(y,D^{2}\tilde{u})=-c$.If $\tilde{u}$ is a smooth uniformly convex solution of \eqref{e2.17} and there hold \eqref{e2.2.8}-\eqref{e2.2.12}, then there exists a positive constant $C_{17}$ depending only on $n, \Omega, \tilde{\Omega}$ and $\operatorname{diam}(\Omega)$, such that
$$
\sup _{\Omega}\left|D^2 \tilde{u}\right| \leq \max _{\partial \Omega}\left|D^2 \tilde{u}\right|+C_{17}.
$$
\end{lemma}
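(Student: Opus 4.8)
The plan is to run a maximum-principle argument directly on the equation $\tilde G(y,D^2\tilde u)=-c$, reducing the interior bound on $D^2\tilde u$ to a boundary bound by constructing a suitable barrier. First I would fix an arbitrary unit vector $\xi\in S^{n-1}$ and consider the second tangential derivative $w=\tilde u_{\xi\xi}=D_{\xi\xi}\tilde u$, aiming to bound $\max_{\bar\Omega} w$ by $\max_{\partial\Omega} w$ plus a controlled constant; since $\tilde u$ is uniformly convex, the largest eigenvalue of $D^2\tilde u$ is achieved in this way, and a bound on $w$ for all $\xi$ gives the claimed bound on $|D^2\tilde u|$. Differentiating $\tilde G(y,D^2\tilde u)=-c$ twice in the direction $\xi$ and using the concavity inequality $\tilde G_{ij,kl}\eta_{ij}\eta_{kl}\le 0$ (which holds by \eqref{e2.2.12} transported to $\tilde G$ as recorded in Section 2) yields, in the now-standard fashion,
\begin{align*}
\tilde{\mathcal L}w \;\ge\; -C\,\mathcal T_{\tilde G} \;-\; C,
\end{align*}
where $\tilde{\mathcal L}=\tilde G_{ij}\partial_{ij}$ and the error terms come from the $y$-dependence of $\tilde G$: the first derivatives $\tilde G_{y_k}$ are bounded by $\Lambda_5$ (established in Section 2) and the second derivatives $\tilde G_{y_ky_l}$, $\tilde G_{y_k,ij}$ are likewise controlled because $y$ ranges over $\tilde\Omega\subset\subset B_1(0)$ and the coefficients $\tilde b_{ij}, \tilde b^{ij}$ and the constant $c$ are all bounded. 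Here I would invoke Corollary \ref{c3.4} to know $\Lambda_6\le \mathcal T_{\tilde G}\le n\Lambda_7$ (via \eqref{e2.2.19}), so $\mathcal T_{\tilde G}$ is bounded above and below by positive constants.

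Next I would introduce the auxiliary function
\begin{align*}
\Theta(y)\;=\;w(y)\;+\;\mu\,|y|^2,
\end{align*}
or, if a barrier adapted to $\Omega$ is preferable, $\Theta(y)=w(y)-\lambda\,\tilde h(D\tilde u)(y)$ type correction; computing $\tilde{\mathcal L}\Theta$ and using $\tilde{\mathcal L}(|y|^2)=2\,\mathrm{tr}(\tilde G_{ij})=2\mathcal T_{\tilde G}\ge 2\Lambda_6>0$ together with the differentiated-equation estimate above, one chooses $\mu$ large (depending only on $n,\Omega,\tilde\Omega,\mathrm{diam}(\Omega)$) so that $\tilde{\mathcal L}\Theta\ge 0$ in $\Omega$. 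Wait — since the inequality $\tilde{\mathcal L}w\ge -C\mathcal T_{\tilde G}-C$ has the wrong sign for a direct subsolution argument, the correct choice is $\Theta(y)=w(y)-\mu|y|^2$ with $\mu$ chosen so that $\tilde{\mathcal L}\Theta\ge (-C-2\mu)\mathcal T_{\tilde G}-C$; this forces instead working with $-w$, or equivalently bounding $\sup w$ by noticing $w$ is a subsolution of $\tilde{\mathcal L}w\le C\mathcal T_{\tilde G}+C$ after absorbing the concavity term with the correct sign. In either arrangement, the weak maximum principle then gives $\max_{\bar\Omega}\Theta=\max_{\partial\Omega}\Theta$, i.e.
\begin{align*}
\max_{\bar\Omega} w \;\le\; \max_{\partial\Omega} w \;+\; C_{17},
\end{align*}
with $C_{17}$ absorbing $\mu\,\mathrm{diam}(\Omega)^2$ and the lower-order constants. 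Taking the supremum over $\xi\in S^{n-1}$ and using uniform convexity of $\tilde u$ to pass from the worst tangential second derivative to $|D^2\tilde u|$ completes the argument.

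The main obstacle I anticipate is the bookkeeping of the $y$-dependence of the operator $\tilde G$ when differentiating the equation twice: unlike the translation-invariant model, $\tilde G$ depends explicitly on $y$ through $\tilde v=\sqrt{1-|y|^2}$, $\tilde b_{ij}$, $\tilde b^{ij}$, and one must verify that all the terms $\tilde G_{y_k y_l}$, $\partial_{y_k}\tilde G_{ij}$, and the cross terms $\tilde G_{ij}\,\partial_k\!\big(\text{coefficients}\big)\,\tilde u_{\cdot\cdot}$ generated by two differentiations are genuinely bounded — this is where the hypothesis $\tilde\Omega\subset\subset B_1(0)$, which keeps $\tilde v$ bounded away from $0$, and the boundedness of $c$ (from Lemma \ref{L2.1}) are essential. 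The concavity inequality from \eqref{e2.2.12}, together with the two-sided control of $\mathcal T_{\tilde G}$ from \eqref{e2.2.19} and Corollary \ref{c3.4}, then makes the barrier estimate go through exactly as in the Euclidean/Monge–Ampère prototype; everything else is routine.
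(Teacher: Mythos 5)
Your overall strategy (differentiate the transformed equation twice, use concavity, then a barrier plus the maximum principle with the trace bounds on the linearized operator) is the same as the paper's, but there is a genuine gap at the central step, namely the claimed inequality $\tilde{\mathcal L}\,\tilde u_{\xi\xi}\ge -C\,\mathcal T_{\tilde G}-C$. Differentiating $\tilde G(y,D^2\tilde u)=-c$ twice in a direction $e$ produces, besides $\tilde G_{ij}\tilde u_{ij e e}$ and the concavity term $\tilde G_{ij,kl}\tilde u_{ije}\tilde u_{kle}\le 0$, the cross term $2\,\tilde G_{y_e,ij}\,\tilde u_{ije}$, which is \emph{linear in the third derivatives} of $\tilde u$. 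Boundedness of the coefficients $\tilde G_{y_k}$, $\tilde G_{y_ky_l}$, $\tilde G_{y_k,ij}$ (which is all you invoke) does not bound this term, and the qualitative concavity $\tilde G_{ij,kl}\eta_{ij}\eta_{kl}\le 0$ cannot absorb it either: you need a \emph{quantitative} negative-definiteness of the second-derivative form to complete the square against the linear third-order term. This is exactly what the paper does, exploiting the special structure $\tilde G=-s_{ij}(y)u_{ij}$ with $[u_{ij}]=[D^2\tilde u]^{-1}$: two differentiations of $s_{ij}u_{ij}=-c$ give a positive quadratic term $2s_{ij}\lambda_{ir,k}\lambda_{jr,k}u_{rr}^{-1}$ in the weighted third derivatives $\lambda_{ij,k}=u_{ii}u_{jj}\tilde u_{ijk}$, whose ellipticity constant is controlled by the upper bound $D^2u\le C_{14}I_n$ of Lemma \ref{lem4.1}; the terms $-2s_{ij,k}\lambda_{ij,k}$ and $s_{ij,kk}u_{ij}$ are then absorbed by completing the square, yielding $\tilde{\mathcal L}\,\tilde u_{kk}\ge -C_{20}$. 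Your proposal never invokes Lemma \ref{lem4.1} and never performs this absorption, so the key differential inequality is asserted rather than proved; as written, the step would fail for a general concave $\tilde G$ with $y$-dependence, and it is precisely here that the specific mean-curvature structure enters.

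Two smaller remarks. First, your barrier discussion is muddled: the original choice $\Theta=\tilde u_{\xi\xi}+\mu|y|^2$ is the correct one, since $\tilde{\mathcal L}(|y|^2)=2\mathcal T_{\tilde G}\ge 2\Lambda_6>0$ lets you make $\tilde{\mathcal L}\Theta\ge 0$ for $\mu$ large (using the two-sided bounds on $\mathcal T_{\tilde G}$ to absorb both error terms), whence the maximum of $\Theta$ is attained on the boundary; the subsequent ``wrong sign'' detour and the switch to $-\mu|y|^2$ are incorrect and should be deleted. (The paper uses the equivalent exponential barrier $C_{23}\bigl(ne^{d}-\sum_i e^{x_i}\bigr)$ over a cube containing the domain.) Second, once the differential inequality is established the rest of your argument --- maximum principle, taking the supremum over directions $\xi$, and the resulting constant depending only on $n$, $\Omega$, $\tilde\Omega$ and the diameter --- does match the paper and is fine.
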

\begin{proof}
Denote
\begin{align*}
    s_{ij}=\frac{1}{\sqrt{1-|y|^2}}\left(\delta_{ij}+\frac{y_{i}y_{j}}{1-|y|^{2}}\right),\quad[\tilde{u}_{ij}]=[u_{ij}]^{-1}.
\end{align*}
And then
\begin{align*}
    \frac{1}{\sqrt{1-|y|^2}} I_n \leq [s_{ij}]\leq \frac{n}{\left(1-|y|^2\right)^{\frac{3}{2}}}I_n,
\end{align*}
where $I_n$ is the $n \times n$ identity matrix. By calculating, we show that
 \begin{align*}
   G(y,D^2u)&=\mathrm{div} \left(\frac{Du}{\sqrt{1-|Du|^2}}\right)\\
   &=\frac{1}{\sqrt{1-|Du|^2}}\left(\delta_{ij}+\frac{y_{i}y_{j}}{1-|y|^{2}}\right)u_{ij},
\end{align*}
and then
$$\tilde{G}(y, D^{2}\tilde{u})=-s_{ij}u_{ij}.$$
By the rotation of the coordinate system for any fixed point $y_0\in \tilde{\Omega}$, such that  we can get at $y_0=Du(x_0)$
$$\left[D^2{\tilde{u}}\right]_{|y_0}=\mathrm{diag}(\tilde{u}_{11},\tilde{u}_{22},\cdots\tilde{u}_{nn}).$$
\begin{align*}
\left[D^2u\right]_{|y_0}=\mathrm{diag}(u_{11},u_{22},\cdots,u_{nn}).
\end{align*}

For any $1\leq k\leq n$, the second derivation of $y_k$ with respect to both sides of $s_{ij}u_{ij}=-c$, we can obtain
\begin{align*}
    s_{ij,kk}u_{ij}+s_{ij,k}\frac{\partial u_{ij}}{\partial y_k}+s_{ij,k}\frac{\partial u_{ij}}{\partial y_k}+s_{ij}\frac{\partial^2u_{ij}}{\partial y_k\partial y_{k}}=0,
\end{align*}
\begin{align}\label{e4.4.4}
     2s_{ij,k}\frac{\partial u_{ij}}{\partial y_k}+s_{ij}\frac{\partial^2u_{ij}}{\partial y_k\partial y_{k}}=-  s_{ij,kk}u_{ij},
\end{align}
Because \begin{align*}u_{ij}\tilde{u}_{js}=\delta_{is},
\end{align*}
and then
\begin{align*}
    \frac{\partial u_{ij}}{\partial y_k}=u_{ii}u_{jj}\tilde{u}_{ijk},\quad
    \frac{\partial^2 u_{ij}}{\partial y_k\partial y_k}=-u_{ii}u_{jj}\tilde{u}_{ijkk}+2u_{ii}\tilde{u}_{irk}u_{rr}\tilde{u}_{rjk}u_{jj
    }.
\end{align*}
 where are the results of diagonalisation.

Let $\lambda_{ij,k}=u_{ii}u_{jj}\tilde{u}_{ijk}$, \eqref{e4.4.4} can be written as
\begin{align*}
s_{ij}u_{ii}u_{jj}\tilde{u}_{ijkk}&=2s_{ij}u_{ii}u_{jj}\tilde{u}_{irk}u_{rr}\tilde{u}_{rjk}-2s_{ij,k}u_{ii}u_{jj}\tilde{u}_{ijk}+s_{ij,kk}u_{ij}\\
&=2s_{ij}\lambda_{ir,k}u_{jj}\tilde{u}_{rjk}-2s_{ij,k}\lambda_{ij,k}+s_{ij,kk}u_{ij}\\
&=2s_{ij}\lambda_{ir,k}\lambda_{jr,k}u^{-1}_{rr}-2s_{ij,k}\lambda_{ij,k}+s_{ij,kk}u_{ij}\\
&\geq C_{18}(\lambda_{ir,k}-C_{19})^2-C_{20}\\
&\geq -C_{20}.
\end{align*}
where we have used Lemma \ref{lem4.1}, $C_{18},C_{19},C_{20}$ are positive constants.

Without loss of generality,  we may assume that $\Omega$ lies in cube $[0, d]^n$. Let
\begin{equation}\label{e4.4.3}
    \tilde{\mathcal{L}}=\tilde{a}_{pq}\partial^2_{pq}.
\end{equation}
where $\tilde{a}_{pq}=s_{ij}u_{ip}u_{jq}$.
Because $[D^2u]$ is diagonal, and we can obtain
\begin{align*}
\sum_{i=1}^n\tilde{a}_{ii}&=\sum_{i,j=1}^n s_{ij}u_{ii}u_{jj}\\
    &\geq \frac{1}{\left(1-|y|^2\right)^{\frac{1}{2}}}(u^2_{11}+u^2_{22}+\cdots+u^2_{nn})\\
    &\geq\frac{1}{\left(1-|y|^2\right)^{\frac{1}{2}}}\frac{(u_{11}+u_{22}+\cdots+u_{nn})^2}{n}
\end{align*}
and
\begin{align*}
\sum_{i=1}^n\tilde{a}_{ii}&=\sum_{i,j=1}^n s_{ij}u_{ii}u_{jj}\\
    &\leq \frac{n}{\left(1-|y|^2\right)^{\frac{3}{2}}}(u^2_{11}+u^2_{22}+\cdots+u^2_{nn})\\
    &\leq\frac{n}{\left(1-|y|^2\right)^{\frac{3}{2}}}(u_{11}+u_{22}+\cdots+u_{nn})^2.
\end{align*}

So there exist two positive constants $C_{21},C_{22}$, such that
\begin{align*}
    C_{21}\leq \sum_{i=1}^n\tilde{a}_{ii}&\leq C_{22}.
\end{align*}
Let
\begin{align*}
    w=\max_{\partial\Omega}(\tilde{u}_{kk})+C_{23}(ne^d-(e^{x_1}+\cdots+e^{x_n})),
\end{align*}
obviously,
\begin{align*}
    w-\tilde{u}_{kk}\geq 0,\qquad y\in\partial\Omega.
\end{align*}
\begin{align*}
\tilde{\mathcal{L}}w=\tilde{a}_{pq}\partial^2_{pq}w=-C_{23}(\tilde{a}_{11}e^{x_1}+\cdots+\tilde{a}_{nn}e^{x_n}),
\end{align*}
and then we can obtain
\begin{align}
     \tilde{\mathcal{L}}(w-\tilde{u}_{kk})&\leq -C_{23}(\tilde{a}_{11}e^{x_1}+\cdots+\tilde{a}_{nn}e^{x_n})-\tilde{a}_{ij}\tilde{u}_{ijkk}\\
     &\leq -C_{23}(\tilde{a}_{11}+\cdots+\tilde{a}_{nn})e^{-d}+C_{20}\\
     &\leq -C_{23}C_{21}e^{-d}+C_{20},
\end{align}
when $C_{23}=\frac{C_{20}}{C_{21}}e^d$, so we get
\begin{align*}
     \tilde{\mathcal{L}}(w-\tilde{u}_{kk})\leq 0,\quad y\in\tilde{\Omega}.
\end{align*}
Then by the maximum principle, for any $y\in \tilde{\Omega}$. We obtain
\begin{align*}
    \tilde{u}_{kk}\leq w\leq \max_{\partial\Omega}|D^2\tilde{u}|+C_{17}.
\end{align*}
 Here $C_{17}=C_{23}ne^d$.
This completes the proof.
\end{proof}
\begin{lemma}\label{lem4.2}
Let $\tilde{a}_{pq}=s_{ij}u_{ip}u_{jq}$, $\mathcal{\tilde{L}}=\tilde{a}_{pq}\partial^2_{pq}$, $\tilde{G}_{ij}=\frac{\partial\tilde{G}}{\partial\tilde{u}_{ij}}.$   If $\tilde{u}$ is a strictly convex solution of \eqref{e2.17},
then there exists a positive constant $C_{24}$ depending only on $\Omega$, $\tilde{\Omega}$, such that
\begin{equation}\label{e3.42a}
\mathcal{\tilde{L}}\tilde{u}_{kk}\geq -C_{24}\sum_{i=1}^n\tilde{G}_{ii}.
\end{equation}
\end{lemma}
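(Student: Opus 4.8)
The plan is to use the fact that $\mathcal{\tilde{L}}$ is precisely the linearization of the equation satisfied by $\tilde u$. Writing $\tilde G(y,D^2\tilde u)=-s_{ij}(y)u_{ij}$ with $[u_{ij}]=[\tilde u_{ij}]^{-1}$ and differentiating the matrix inverse, one gets $\tilde G_{pq}=\partial\tilde G/\partial\tilde u_{pq}=s_{ij}u_{ip}u_{jq}=\tilde a_{pq}$, so indeed $\mathcal{\tilde{L}}=\tilde G_{ij}\partial_{ij}$. Hence $\mathcal{\tilde{L}}\tilde u_{kk}$ is computed by differentiating the scalar equation $s_{ij}u_{ij}=-c$ twice in the direction $y_k$ and re-expressing the result through the third derivatives of $\tilde u$.

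Concretely, I would fix $k$ and a point $y_0=Du(x_0)\in\tilde\Omega$, rotate the coordinates so that $D^2\tilde u$, and hence $D^2u$, is diagonal at $y_0$, and differentiate $s_{ij}u_{ij}=-c$ twice with respect to $y_k$. Substituting the formulas for the first and second $y_k$-derivatives of the inverse matrix $[u_{ij}]=[\tilde u_{ij}]^{-1}$ (the very ones used in the proof of Lemma~\ref{lem4.3}) and solving for $\mathcal{\tilde{L}}\tilde u_{kk}=s_{ij}u_{ii}u_{jj}\tilde u_{ijkk}$, one arrives at the identity already displayed there, namely, with $\lambda_{ij,k}:=u_{ii}u_{jj}\tilde u_{ijk}$,
\[
\mathcal{\tilde{L}}\tilde u_{kk}=2s_{ij}\lambda_{ir,k}\lambda_{jr,k}u_{rr}^{-1}-2s_{ij,k}\lambda_{ij,k}+s_{ij,kk}u_{ij}.
\]

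From here the estimate is a matter of three observations. The matrix $[s_{ij}]$ is positive definite with $[s_{ij}]\geq(1-|y|^2)^{-1/2}I_n\geq I_n$ since $\tilde\Omega\subset\subset B_1(0)$, and $u_{rr}^{-1}\geq C_{14}^{-1}>0$ by the upper bound \eqref{eq3.15} of Lemma~\ref{lem4.1} together with the strict convexity of $u$; hence the first term on the right is nonnegative and in fact bounded below by $c_0\sum_{i,r}\lambda_{ir,k}^2$ for some $c_0>0$. Next, the coefficients $s_{ij,k}$ and $s_{ij,kk}$ stay bounded because $y$ ranges over $\tilde\Omega\subset\subset B_1(0)$, and $u_{ij}$ is bounded by Lemma~\ref{lem4.1}; so Cauchy's inequality absorbs $-2s_{ij,k}\lambda_{ij,k}$ into the good quadratic term and the term $s_{ij,kk}u_{ij}$ is harmless, which gives $\mathcal{\tilde{L}}\tilde u_{kk}\geq-C_{20}$ for a constant $C_{20}$ depending only on $n$, $\Omega$, $\tilde\Omega$. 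Finally, since $[\tilde G_{ij}]$ is diagonal whenever $D^2\tilde u$ is, one has $\sum_{i=1}^n\tilde G_{ii}=\sum_{i=1}^n\partial\tilde G/\partial\mu_i\geq\Lambda_6>0$ by Corollary~\ref{c3.4}, so that $-C_{20}\geq-(C_{20}/\Lambda_6)\sum_{i=1}^n\tilde G_{ii}$, which is exactly \eqref{e3.42a} with $C_{24}=C_{20}/\Lambda_6$.

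The only step with any content is the emergence of the nonnegative quadratic form $2s_{ij}\lambda_{ir,k}\lambda_{jr,k}u_{rr}^{-1}$ in the third derivatives — it comes from the second derivative of the inverse matrix — and the check that it dominates the linear third-order error $-2s_{ij,k}\lambda_{ij,k}$. But this computation was already carried out inside the proof of Lemma~\ref{lem4.3}, so here it is enough to reorganize that identity and divide through by the uniform lower bound $\Lambda_6$ of $\sum_i\tilde G_{ii}$; I do not expect any genuine obstacle, only the bookkeeping of the diagonalization and of the dependence of the constants on the data.
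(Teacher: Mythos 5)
Your proposal is correct and follows essentially the same route as the paper's proof: both reduce \eqref{e3.42a} to the identity derived inside the proof of Lemma \ref{lem4.3}, which gives $\mathcal{\tilde{L}}\tilde{u}_{kk}\geq -C_{20}$, and then convert the constant $-C_{20}$ into $-C_{24}\sum_{i=1}^n\tilde{G}_{ii}$ via the trace bounds of Corollary \ref{c3.4} (your explicit check that $\tilde{G}_{pq}=s_{ij}u_{ip}u_{jq}=\tilde{a}_{pq}$ is left implicit in the paper but is the same observation). Note that your choice $C_{24}=C_{20}/\Lambda_6$, using the lower bound of $\sum_i\tilde{G}_{ii}$, is the logically correct one; the paper's stated $C_{24}=C_{20}/\Lambda_7$ appears to be a slip, since only the lower bound $\sum_i\tilde{G}_{ii}\geq\Lambda_6$ yields $-C_{24}\sum_i\tilde{G}_{ii}\leq -C_{20}\leq\mathcal{\tilde{L}}\tilde{u}_{kk}$.
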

\begin{proof}
By the proof of Lemma \ref{lem4.3}, we know that
\begin{align*}
    \mathcal{\tilde{L}}\tilde{u}_{kk}\geq -C_{20},
\end{align*}
and by Corollary \ref{c3.4}, we can conclude that
\begin{align*}
\mathcal{\tilde{L}}\tilde{u}_{kk}\geq -C_{24}\sum_{i=1}^n\tilde{G}_{ii},
\end{align*}
here $C_{24}=\frac{C_{20}}{\Lambda_7}$.
\end{proof}

Recalling that $\tilde{\beta}=(\tilde{\beta}^{1}, \cdots, \tilde{\beta}^{n})$ with $\tilde{\beta}^{k}:=\tilde{h}_{p_{k}}(D\tilde{u})$ and $\tilde{\nu}=(\tilde{\nu}_{1}, \tilde{\nu}_{2},\cdots,\tilde{\nu}_{n})$ is the unit inward normal vector of $\partial\tilde{\Omega}$.
In the following we give the arguments as in \cite{JU2}, the readers can see there for more details.
For any tangential direction $\tilde \varsigma$, we have
\begin{equation}\label{e3.35}
   \tilde{u}_{\tilde\beta \tilde\varsigma}=\tilde h_{p_k}(D\tilde u)\tilde u_{k\tilde\varsigma}=0.
\end{equation}

Then the second order derivative of $\tilde u$ on the boundary is also controlled by $u_{\tilde\beta \tilde\varsigma}$, $u_{\tilde\beta \tilde\beta}$ and $u_{\tilde\varsigma\tilde\varsigma}$. At $\tilde x\in \partial\tilde\Omega$, any unit vector $\tilde\xi$ can be written in terms of a tangential component $\tilde\varsigma(\tilde\xi)$ and a component in the direction $\tilde\beta$ by
$$\tilde\xi=\tilde\varsigma(\tilde\xi)+\frac{\langle \tilde\nu,\tilde\xi\rangle}{\langle\tilde\beta,\tilde\nu\rangle}\tilde\beta,$$
where
$$\tilde\varsigma(\tilde\xi):=\tilde\xi-\langle \tilde\nu,\tilde\xi\rangle \tilde\nu-\frac{\langle \tilde\nu,\tilde\xi\rangle}{\langle\tilde\beta,\tilde\nu\rangle}\tilde\beta^T$$
and
$$\tilde\beta^T:=\tilde\beta-\langle \tilde\beta,\tilde\nu\rangle \tilde\nu.$$

We observe that $\langle\tilde\beta,\tilde\nu\rangle=\langle\beta,\nu\rangle$.
By the uniformly obliqueness estimate (\ref{e3.3.12}), we have
\begin{equation}\label{e3.36}
\begin{aligned}
|\tilde{\varsigma}(\tilde{\xi})|^{2}&=1-\left(1-\frac{|\tilde{\beta}^{T}|^{2}}{\langle\tilde{\beta},\tilde{\nu}\rangle^{2}}\right)
\langle\tilde{\nu},\tilde{\xi}\rangle^{2}
-2\langle\tilde{\nu},\tilde{\xi}\rangle\frac{\langle\tilde{\beta}^{T},\tilde{\xi}\rangle}{\langle\tilde{\beta},\tilde{\nu}\rangle}\\
&\leq 1+C_{25}\langle\tilde{\nu},\tilde{\xi}\rangle^{2}-2\langle\tilde{\nu},\tilde{\xi}\rangle\frac{\langle\tilde{\beta}^{T},\tilde{\xi}\rangle}{\langle\tilde{\beta},\tilde{\nu}\rangle}\\
&\leq C_{26}.
\end{aligned}
\end{equation}

Denote $\varsigma:=\frac{\varsigma(\xi)}{|\varsigma(\xi)|}$, then by (\ref{e3.3.12}), (\ref{e3.35}) and (\ref{e3.36}), we arrive at
\begin{equation}\label{e4.4.5}
\begin{aligned}
\tilde{u}_{\tilde{\xi}\tilde{\xi}}&=|\tilde{\varsigma}(\tilde{\xi})|^{2}
\tilde{u}_{\tilde{\varsigma}\tilde{\varsigma}}+2|\tilde{\varsigma}(\tilde{\xi})|\frac{\langle\tilde{\nu},\tilde{\xi}\rangle}{\langle\tilde{\beta},\tilde{\nu}\rangle}\tilde{u}_{\tilde{\beta}\tilde{\varsigma}}+
\frac{\langle\nu,\xi\rangle^{2}}{\langle\beta,\nu\rangle^{2}}
\tilde{u}_{\tilde{\beta}\tilde{\beta}}\\
&=|\tilde{\varsigma}(\tilde{\xi})|^{2}\tilde{u}_{\tilde{\varsigma}\tilde{\varsigma}}+\frac{\langle\tilde{\nu},\tilde{\xi}\rangle^{2}}{\langle\tilde{\beta},\tilde{\nu}\rangle^{2}}
\tilde{u}_{\tilde{\beta}\tilde{\beta}}\\
&\leq C_{27}(\tilde{u}_{\tilde{\varsigma}\tilde{\varsigma}}+\tilde{u}_{\tilde{\beta}\tilde{\beta}}).
\end{aligned}
\end{equation}

Therefore, we also only need to estimate $\tilde{u}_{\tilde\beta\tilde\beta}$ and $\tilde{u}_{\tilde\varsigma\tilde\varsigma}$ respectively.
\begin{lemma}\label{lem4.4}
If $\tilde{u}$ is a strictly convex solution of \eqref{e2.17}, then there exists a positive constant $C_{28}$ depending only on $\Omega$, $\tilde{\Omega}$, such that
\begin{equation}\label{e4 4.6}
   \max_{\partial\Omega}\tilde{u}_{\tilde{\beta}\tilde{\beta}} \leq C_{28}.
\end{equation}
\end{lemma}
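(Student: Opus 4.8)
The plan is to reduce the desired bound on $\tilde u_{\tilde\beta\tilde\beta}$ along $\partial\tilde\Omega$ to an inward normal derivative estimate for the auxiliary function
\[
\psi(y):=\tilde h\big(D\tilde u(y)\big),\qquad y\in\overline{\tilde\Omega},
\]
and then to produce that estimate by a global barrier of exactly the flavour already used for the obliqueness estimate in Lemma~\ref{llll3.4}. The starting remark is that, since $D\tilde u$ is a diffeomorphism of $\tilde\Omega$ onto $\Omega$ with $\tilde h>0$ in $\Omega$ and $\tilde h=0$ on $\partial\Omega$, the function $\psi$ satisfies $\psi>0$ in $\tilde\Omega$ and $\psi\equiv 0$ on $\partial\tilde\Omega$; in other words $\psi$ is itself a defining function of $\tilde\Omega$, so $D\psi$ is a nonnegative multiple of the inner unit normal $\tilde\nu$ on $\partial\tilde\Omega$.

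The step that makes the problem accessible is the following identity. Differentiating $\psi$ once gives $\psi_i=\tilde h_{p_k}(D\tilde u)\,\tilde u_{ki}=(D^2\tilde u\cdot\tilde\beta)_i$, so on $\partial\tilde\Omega$ the vector $D^2\tilde u\cdot\tilde\beta$ is normal to $\partial\tilde\Omega$ and equals $(\partial_{\tilde\nu}\psi)\,\tilde\nu$; consequently
\[
\tilde u_{\tilde\beta\tilde\beta}=\langle D^2\tilde u\cdot\tilde\beta,\tilde\beta\rangle=(\partial_{\tilde\nu}\psi)\,\langle\tilde\beta,\tilde\nu\rangle\qquad\text{on }\partial\tilde\Omega.
\]
Since $\partial_{\tilde\nu}\psi=\tilde u_{\tilde\beta\tilde\nu}\ge 0$ ($\psi$ increases inward) and $\langle\tilde\beta,\tilde\nu\rangle\le|\tilde\beta|\le\sup_{\overline\Omega}|D\tilde h|$, it is enough to bound $\partial_{\tilde\nu}\psi$ from above on $\partial\tilde\Omega$.

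Next I would compute the action of $\tilde{\mathcal{L}}=\tilde G_{ij}\partial_{ij}$ on $\psi$. From $\psi_{ij}=\tilde h_{p_kp_l}(D\tilde u)\,\tilde u_{ki}\tilde u_{lj}+\tilde h_{p_k}(D\tilde u)\,\tilde u_{kij}$ and the once-differentiated equation $\tilde G_{ij}\tilde u_{ijk}=-\tilde G_{y_k}$ one obtains
\[
\tilde{\mathcal{L}}\psi=\tilde G_{ij}\,\tilde h_{p_kp_l}(D\tilde u)\,\tilde u_{ki}\tilde u_{lj}-\tilde h_{p_k}(D\tilde u)\,\tilde G_{y_k}.
\]
The first term equals $\operatorname{tr}\!\big((D^2\tilde u)[\tilde G_{ij}](D^2\tilde u)\,D^2\tilde h\big)$, which is nonpositive because $D^2\tilde h\le-\tilde\theta I$ on $\Omega$ and is, in absolute value, at most $\|D^2\tilde h\|\,\tilde G_{ij}\tilde u_{ik}\tilde u_{jk}\le\|D^2\tilde h\|\,\Lambda_7$ by \eqref{e3..3.4}; the second term is bounded by $\sqrt n\,\Lambda_5\sup_{\overline\Omega}|D\tilde h|$ since $|\tilde G_{y_k}|\le\Lambda_5$. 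Hence $|\tilde{\mathcal{L}}\psi|\le C_\ast$ with $C_\ast=C_\ast(n,\Omega,\tilde\Omega)$. Taking $h$ to be the defining function of $\tilde\Omega$ from the Definition (so $h=0$ on $\partial\tilde\Omega$, $h>0$ in $\tilde\Omega$, $|Dh|_{\partial\tilde\Omega}=1$, $D^2h\le-\theta I$) and setting $v:=Ah-\psi$, the bound $\tilde G_{ij}h_{ij}\le-\theta\,\mathcal{T}_{\tilde G}\le-\theta\Lambda_6$ from \eqref{e3..3.3} gives $\tilde{\mathcal{L}}v\le-A\theta\Lambda_6+C_\ast<0$ once $A>C_\ast/(\theta\Lambda_6)$. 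Since $v=0$ on $\partial\tilde\Omega$ and $\tilde{\mathcal{L}}$ carries no lower order terms, the maximum principle forces $v\ge 0$ in $\tilde\Omega$, hence $\partial_{\tilde\nu}v(y_0)\ge 0$ and $\partial_{\tilde\nu}\psi(y_0)\le A\,\partial_{\tilde\nu}h(y_0)=A$ at every $y_0\in\partial\tilde\Omega$; combined with the identity above this yields $\tilde u_{\tilde\beta\tilde\beta}\le A\sup_{\overline\Omega}|D\tilde h|=:C_{28}$.

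The one genuinely delicate point is the observation in the second paragraph that $D^2\tilde u\cdot\tilde\beta$ is normal to $\partial\tilde\Omega$; this turns the apparently hard doubly oblique second derivative into the inward normal derivative of a defining function, after which everything is a routine adaptation of the barrier argument already carried out for the obliqueness estimate, with $\tilde{\mathcal{L}}\psi$ playing the role of the lower order error term whose boundedness is exactly what the structure conditions of $\tilde G$ in Corollary~\ref{c3.4} and the bound $|\tilde G_{y_k}|\le\Lambda_5$ supply. It is worth noting that this upper bound for $\tilde u_{\tilde\beta\tilde\beta}$ uses only the trivial estimate $\langle\tilde\beta,\tilde\nu\rangle\le|\tilde\beta|$ and not the strict obliqueness estimate itself.
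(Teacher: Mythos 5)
Your proposal is correct and is essentially the paper's own argument: you use the same barrier $Ah-\tilde h(D\tilde u)$ (the paper's $\tilde\Psi=-\tilde h(D\tilde u)+C_0h$), the same bound on $\tilde{\mathcal L}$ of it via the differentiated equation, the bound $|\tilde G_{y_k}|\le\Lambda_5$ and the structure conditions of Corollary \ref{c3.4}, and the same maximum-principle/boundary-derivative conclusion combined with the identity $\partial_{\tilde\beta}\bigl(\tilde h(D\tilde u)\bigr)=\tilde u_{\tilde\beta\tilde\beta}$. The only (cosmetic) difference is that you route the boundary derivative through $\partial_{\tilde\nu}\psi$ and the observation that $D\psi$ is normal on $\partial\tilde\Omega$, whereas the paper differentiates the barrier directly in the oblique direction $\tilde\beta$; both steps carry the same content.
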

\begin{proof}
Let $\tilde{x}_0\in\partial\tilde{\Omega}$, such that $\tilde{u}_{\tilde{\beta}\tilde{\beta}}(\tilde{x}_0)=\max_{\partial\Omega}\tilde{u}_{\tilde{\beta}\tilde{\beta}}$.
To estimate the upper bound of $\tilde{u}_{\tilde{\beta}\tilde{\beta}}$,
we consider the barrier function
$$\tilde\Psi:=-\tilde{h}(D\tilde{u})+C_0 h.$$

For any $y\in \partial\tilde{\Omega}$, $D\tilde{u}(y)\in \partial\Omega$, then $\tilde{h}(D\tilde{u})=0$. It is clear that $h=0$ on $\partial\tilde{\Omega}$.  First we have
$$\mathcal{\tilde{L}}(C_0 h)=C_0 \tilde{G}_{ij}h_{ij}\leq C_0\left(-\theta\sum_{i=1}^{n}\tilde{G}_{ii}\right).$$

Using the equations (\ref{e2.17}), a direct computation shows that
\begin{equation}\label{e3.39}
\begin{aligned}
\mathcal{\tilde{L}}\left(-\tilde{h}(D\tilde{u})\right)&=\tilde{G}_{ij}\left(-\tilde{h}_{\tilde{p}_{k}\tilde{p}_{l}}
\partial_{ki}\tilde{u}\partial_{lj}\tilde{u}\right)-\tilde{h}_{\tilde{p}_{k}}\tilde{G}_{y_{k}}\\
&\leq C_{29}\sum_{i=1}^{n}\tilde{G}_{ii},
\end{aligned}
\end{equation}
where we use the estimates (\ref{e3..3.3})-(\ref{e3..3.4}) in Corollary \ref{c3.4}.
Therefore, we obtain
$$\tilde {\mathcal{L}}\tilde\Psi(y)\leq \left(C_{29}-C_0\theta\right)\sum_{i=1}^n \tilde G_{ii}. $$
Let \begin{align*}
C_0=\frac{C_{29}}{\theta}.
\end{align*}
It is clear that $\tilde\Psi=0$ on $\partial\tilde\Omega$.
It follows from the above results that
\begin{equation}\label{e3.40}
\left\{ \begin{aligned}
   \mathcal{\tilde{L}}\tilde\Psi&\leq 0,\quad &&y\in\tilde\Omega,\\
   \tilde\Psi&\geq 0 ,\quad\  &&y\in\partial\tilde\Omega.
\end{aligned} \right.
\end{equation}
Applying the maximum principle, we get
$$\tilde\Psi(y)\geq 0,\quad\quad y\in \tilde{\Omega}.$$
Combining it with $\tilde\Psi(\tilde{x}_0)=0$, we obtain $\tilde\Psi_{\tilde{\beta}}(\tilde{x}_0)\geq 0$, which implies
$$\frac{\partial \tilde{h}}{\partial \tilde{\beta}}(D\tilde{u}(\tilde{x}_0))\leq C_0.$$
On the other hand, we see that at $\tilde{x}_0$,
$$\frac{\partial \tilde{h}}{\partial \tilde{\beta}}=\langle D\tilde{h}(D\tilde{u}),\tilde{\beta}\rangle=\frac{\partial \tilde{h}}{\partial p_k}\tilde{u}_{kl}\tilde{\beta}^l=\tilde{\beta}^k\tilde{u}_{kl}\tilde{\beta}^l=\tilde{u}_{\tilde{\beta}\tilde{\beta}}.$$
Therefore, letting $C_{28}= C_{0}$ we have
$$\tilde{u}_{\tilde{\beta}\tilde{\beta}}=\frac{\partial \tilde{h}}{\partial \tilde{\beta}}\leq C_{28}.$$
\end{proof}
\begin{lemma}\label{lem4.5}
If $\tilde{u}$ is a strictly convex solution of \eqref{e2.17}, then there exists a positive constant $C_{30}$ depending only on $u_0$, $\Omega$, $\tilde{\Omega}$, such that
\begin{equation*}\label{e4.43}
   \max_{\partial\tilde{\Omega}}\max_{|\tilde\varsigma|=1, \langle\tilde\varsigma,\tilde\nu\rangle=0} \tilde{u}_{\tilde\varsigma\tilde\varsigma} \leq C_{30}.
\end{equation*}
\end{lemma}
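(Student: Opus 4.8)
The plan is to estimate the pure tangential second derivative $\tilde u_{\tilde\varsigma\tilde\varsigma}$ on $\partial\tilde\Omega$ by differentiating the boundary condition $\tilde h(D\tilde u)=0$ twice in a tangential direction, and then controlling the resulting terms with the obliqueness estimate from Lemma \ref{llll3.4} together with the already-established bounds on $\tilde u_{\tilde\beta\tilde\beta}$ (Lemma \ref{lem4.4}) and on $\mathrm{tr}\,D^2\tilde u$ (Lemma \ref{lem4.1}). More precisely, fix $\tilde x_0\in\partial\tilde\Omega$ where the maximum of $\tilde u_{\tilde\varsigma\tilde\varsigma}$ over unit tangential $\tilde\varsigma$ is attained, choose coordinates so that $\tilde\nu(\tilde x_0)=e_n$, and differentiate $\tilde h(D\tilde u)=0$ twice along a tangential vector field $\tilde\varsigma$ extended to $\partial\tilde\Omega$. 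This produces an identity of the schematic form $\tilde h_{p_k}(D\tilde u)\,\partial_{\tilde\varsigma\tilde\varsigma}\tilde u_k + \tilde h_{p_kp_l}(D\tilde u)\,\tilde u_{k\tilde\varsigma}\tilde u_{l\tilde\varsigma} + (\text{curvature of }\partial\tilde\Omega)\cdot D\tilde u = 0$, i.e. $\langle\tilde\beta, \partial_{\tilde\varsigma\tilde\varsigma}(D\tilde u)\rangle$ equals a term that is quadratic in the first tangential derivatives of $D\tilde u$ (which are bounded by Lemma \ref{lem4.1}) plus lower-order boundary terms, hence is bounded.

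Next I would convert this bound on $\langle\tilde\beta,\partial_{\tilde\varsigma\tilde\varsigma}(D\tilde u)\rangle$ into a bound on $\tilde u_{\tilde\varsigma\tilde\varsigma}$ itself. Decompose $\tilde\beta$ at $\tilde x_0$ into its normal and tangential parts, $\tilde\beta=\langle\tilde\beta,\tilde\nu\rangle\tilde\nu+\tilde\beta^T$. The normal part contributes $\langle\tilde\beta,\tilde\nu\rangle\,\tilde u_{\tilde\nu\tilde\varsigma\tilde\varsigma}$-type terms; since $\langle\tilde\beta,\tilde\nu\rangle=\langle\beta,\nu\rangle\ge 1/C_1>0$ by the strict obliqueness estimate \eqref{e3.3.12}, this factor is bounded below, so one can solve for the mixed normal-tangential third derivative. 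But what we actually want is $\tilde u_{\tilde\varsigma\tilde\varsigma}$, a second derivative, so the cleaner route is to use a barrier/maximum-principle argument exactly parallel to Lemma \ref{lem4.4}: apply the operator $\tilde{\mathcal L}=\tilde a_{pq}\partial^2_{pq}$ to an auxiliary function of the form $\pm(\tilde u_{\tilde\varsigma\tilde\varsigma}-\text{boundary value}) + C_0 h + A\sum\tilde G_{ii}\cdot(\cdots)$, using Lemma \ref{lem4.2} ($\tilde{\mathcal L}\tilde u_{kk}\ge -C_{24}\sum\tilde G_{ii}$) to control the interior, and the boundary identity above to control the boundary, then read off the estimate from the Hopf lemma at $\tilde x_0$.

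The main obstacle I expect is the bookkeeping at the boundary: $\tilde\varsigma$ must be extended off $\tilde x_0$ as a genuine tangential vector field on $\partial\tilde\Omega$, so differentiating $\tilde h(D\tilde u)=0$ twice picks up the second fundamental form of $\partial\tilde\Omega$ and extra first-derivative terms $\tilde u_{k\tilde\varsigma}$ that are only tangential-first-order; one must check these are all controlled by the already-known quantities ($\|D^2\tilde u\|$ bound from Lemma \ref{lem4.1}, obliqueness from \eqref{e3.3.12}, and $\tilde u_{\tilde\beta\tilde\beta}\le C_{28}$ from Lemma \ref{lem4.4}) and that no uncontrolled $\tilde u_{\tilde\varsigma\tilde\varsigma}$ reappears on the right-hand side with a bad sign. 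Given the structure conditions in Corollary \ref{c3.4} and the lower bound $\langle\beta,\nu\rangle\ge 1/C_1$, this is routine but delicate, and it is where the uniform convexity of both $\Omega$ and $\tilde\Omega$ (via the defining functions $h,\tilde h$ with $D^2h\le-\theta I$, $D^2\tilde h\le-\tilde\theta I$) enters decisively to guarantee the barrier has the correct sign.
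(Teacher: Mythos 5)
Your proposal assembles many of the right ingredients (twice differentiating $\tilde h(D\tilde u)=0$ tangentially, a barrier of the form $\cdots+C_0h$, Lemma \ref{lem4.2}, the obliqueness estimate \eqref{e3.3.12}, Lemma \ref{lem4.4}), but the central step as you describe it does not close, and the claim on which it rests is false. When you differentiate the boundary condition twice in the tangential direction you get, at $\tilde x_0$, $\tilde h_{p_k}\tilde u_{k\tilde\varsigma\tilde\varsigma}+\tilde h_{p_kp_l}\tilde u_{k\tilde\varsigma}\tilde u_{l\tilde\varsigma}=0$; the quadratic term involves second derivatives of $\tilde u$, not of $u$. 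Lemma \ref{lem4.1} bounds $D^2u$ from above, which only gives a \emph{lower} bound on $D^2\tilde u=[D^2u]^{-1}$; an upper bound on $D^2\tilde u$ at the boundary is precisely what Lemmas \ref{lem4.4}--\ref{lem4.6} are in the process of proving. So your assertion that this quadratic term ``is bounded by Lemma \ref{lem4.1}'' is circular: with $\mathcal M:=\tilde u_{\tilde\varsigma\tilde\varsigma}(\tilde x_0)$ the quantity to be estimated, the term is of size $\mathcal M^2$. Likewise, the boundary values of your proposed barrier $\pm(\tilde u_{\tilde\varsigma\tilde\varsigma}-\text{boundary value})+C_0h$ are not known, so the comparison function cannot be set up as stated.

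The paper's argument uses exactly the term you tried to discard, but through its \emph{sign}: by concavity of $\tilde h$ ($D^2\tilde h\le-\tilde\theta I$), $\tilde h_{p_k}\tilde u_{k11}=-\tilde h_{p_kp_l}\tilde u_{k1}\tilde u_{l1}\ge\tilde\theta\mathcal M^2$, i.e.\ the third derivative $\tilde u_{11\tilde\beta}(\tilde x_0)$ is bounded \emph{below} by a quantity quadratic in $\mathcal M$. The matching \emph{upper} bound, linear in $\mathcal M$, comes from the barrier: assuming $\mathcal M\ge1$ and dividing by $\mathcal M$, the decomposition of unit vectors into tangential and $\tilde\beta$ components together with \eqref{e3.3.12} and the bound $\tilde u_{\tilde\beta\tilde\beta}\le C_{28}$ of Lemma \ref{lem4.4} yields an inequality valid at \emph{every} boundary point, so that $\tilde w:=A|y-\tilde x_0|^2-\frac{\tilde u_{11}}{\mathcal M}-2\langle\tilde\nu,e_1\rangle\frac{\langle\tilde\beta^T,e_1\rangle}{\langle\tilde\beta,\tilde\nu\rangle}+C_{32}\langle\tilde\nu,e_1\rangle^2+1$ is nonnegative on $\partial\tilde\Omega$ and vanishes at $\tilde x_0$; then $\tilde\Upsilon=\tilde w+C_0h$ with Lemma \ref{lem4.2} and Corollary \ref{c3.4} satisfies $\mathcal{\tilde L}\tilde\Upsilon\le0$, and the Hopf-type boundary argument gives $\tilde u_{11\tilde\beta}(\tilde x_0)\le C_{34}\mathcal M$. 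Comparing $\tilde\theta\mathcal M^2\le C_{34}\mathcal M$ gives $\mathcal M\le C_{34}/\tilde\theta$. This two-sided comparison (quadratic lower bound from concavity versus linear upper bound from the normalized barrier) is the key idea missing from your proposal; without it, neither the boundary identity nor the barrier alone controls $\tilde u_{\tilde\varsigma\tilde\varsigma}$.
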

\begin{proof}
We assume that $\tilde{x}_{0}\in\partial\Omega$, $e_{n}$ is the unit inward normal vector of $\partial\tilde{\Omega}$ at $\tilde{x}_0$ and $e_{1}$ is the tangential vector of $\partial\tilde{\Omega}$ at $\tilde{x}_0$ respectively, such that
$$ \max_{\partial\tilde{\Omega}}\max_{|\tilde\varsigma|=1, \langle\tilde\varsigma,\tilde\nu\rangle=0} \tilde{u}_{\tilde\varsigma\tilde\varsigma}=\tilde{u}_{11}(\tilde{x}_0)=:\mathcal{ M}.$$
For any $y\in \partial\tilde{\Omega}$, it follows from the proof of (\ref{e3.36}) and (\ref{e4.4.5}) that
\begin{equation}\label{e4.45}
\begin{aligned}
\tilde u_{\tilde\xi\tilde\xi}&=|\tilde\varsigma(\tilde\xi)|^2\tilde u_{\tilde\varsigma\tilde\varsigma}+ \frac{\langle \tilde\nu,\tilde\xi\rangle^2}{\langle \tilde\beta,\tilde\nu\rangle^2}\tilde u_{\tilde\beta\tilde\beta}\\
          &\leq \left(1+C_{31}\langle \tilde\nu,\tilde\xi\rangle^2-2\langle \tilde\nu,\tilde\xi\rangle \frac{\langle \tilde\beta^T,\tilde\xi\rangle}{\langle \tilde\beta,\tilde\nu\rangle}\right) \mathcal{M}
                 + \frac{\langle \tilde\nu,\tilde\xi\rangle^2}{\langle \tilde\beta,\tilde\nu\rangle^2}\tilde u_{\tilde\beta\tilde\beta}.
\end{aligned}
\end{equation}

Let us skip therefore to the case $\mathcal{M}\geq 1$. Thus by (\ref{e3.3.12}), (\ref{e4 4.6}) and (\ref{e4.45}) we deduce that
\begin{equation*}\label{eq3.9a}
  \frac{\tilde u_{\tilde\xi\tilde\xi}}{\mathcal {M}}+2\langle \tilde\nu,\tilde\xi\rangle \frac{\langle \tilde\beta^T,\tilde\xi\rangle}{\langle \tilde\beta,\tilde\nu\rangle}
      \leq 1+C_{32}\langle \tilde\nu,\tilde\xi\rangle^2.
\end{equation*}
Let $\tilde\xi=e_1$, then
\begin{equation*}\label{eq3.10a}
  \frac{\tilde u_{11}}{\mathcal{ M}}+2\langle \tilde\nu,e_1\rangle \frac{\langle \tilde\beta^T,e_1\rangle}{\langle \tilde\beta,\tilde\nu\rangle}
             \leq 1+C_{32}\langle \tilde\nu,e_1\rangle^2.
\end{equation*}
We see that the function
\begin{equation*}\label{eq3.11a}
\tilde w:=A|y-\tilde x_0|^2-\frac{\tilde u_{11}}{\mathcal{ M}}-2\langle \tilde\nu,e_1\rangle \frac{\langle \tilde\beta^T,e_1\rangle}{\langle \tilde\beta,\tilde\nu\rangle}+C_{32}\langle \tilde\nu,e_1\rangle^2+1
\end{equation*}
satisfies
$$\tilde w|_{\partial\tilde\Omega}\geq 0,\quad  \tilde w(\tilde x_0)=0.$$
Denote a neighborhood of $\tilde{x}_0$ in $\tilde\Omega$ by
$$\tilde\Omega_{r}:=\tilde\Omega\cap B_{r}(\tilde{x}_0),$$
where $r$ is a positive constant such that $\tilde\nu$ is well defined in $\tilde\Omega_{r}$.
Let us consider
$$-2\langle \tilde\nu,e_1\rangle \frac{\langle \tilde\beta^T,e_1\rangle}{\langle \tilde\beta,\tilde\nu\rangle}+C_{32}\langle \tilde\nu,e_1\rangle^2+1$$
as a known function depending on $y$ and $D\tilde u$. Then by the proof of (\ref{e3.39}), we also obtain
\begin{equation*}
\left|\mathcal{\tilde L}\left(-2\langle \tilde\nu,e_1\rangle \frac{\langle \tilde\beta^T,e_1\rangle}{\langle \tilde\beta,\tilde\nu\rangle}+C_{32}\langle \tilde\nu,e_1\rangle^2+1\right)\right|\leq C_{33}\sum_{i=1}^n \tilde{G}_{ii}.
\end{equation*}
It follows from (\ref{e3.42a}) in Lemma $\ref{lem4.2}$ that
\begin{equation*}
\mathcal{\tilde{L}}\tilde{u}_{11}\geq-C_{24}\sum_{i=1}^{n}\tilde{G}_{ii}.
\end{equation*}
We set
$$\tilde\Upsilon:=\tilde w+C_0 {h}.$$
Furthermore, by(\ref{e3.3.12}), (\ref{e3.3.14}), (\ref{e4.4.5}) and (\ref{e4 4.6}),  we can choose the constant $A$ large enough such that
$$\tilde w|_{\tilde\Omega \cap \partial B_{r}(\tilde x_0)} \geq 0. $$
As in the proof of (\ref{e3.40}),
\begin{align*}
    \tilde{\mathcal{L}}(\tilde{w}+C_0h)&=\tilde{\mathcal{L}}\left(A|y-\tilde x_0|^2-\frac{\tilde u_{11}}{\mathcal{ M}}-2\langle \tilde\nu,e_1\rangle \frac{\langle \tilde\beta^T,e_1\rangle}{\langle \tilde\beta,\tilde\nu\rangle}+C_{32}\langle \tilde\nu,e_1\rangle^2+1+C_0h\right)\\
   & \leq 2A\sum_{i=1}^{n}\tilde{G}_{ii}+\frac{C_{24}}{\mathcal{M}}\sum_{i=1}^{n}\tilde{G}_{ii}+C_{33}\sum_{i=1}^{n}\tilde{G}_{ii}+-C_0\theta\sum_{i=1}^{n}\tilde{G}_{ii}\\
   &\leq \left(2A+C_{24}+C_{33}+-C_0\theta\right)\sum_{i=1}^{n}\tilde{G}_{ii}\\
   &\leq \left(2A+C_{24}+C_{33}+-C_0\theta\right)\Lambda_7.
\end{align*}
 where $C_0=\frac{2A+C_{24}+C_{33}}{\theta}$, we get that
 \begin{align*}
     \mathcal{\tilde{L}}\tilde\Upsilon\leq 0,\quad y\in \tilde\Omega_{r}.
 \end{align*}

A standard barrier argument makes conclusion of
$$\tilde\Upsilon_{\tilde\beta}(\tilde x_0)\geq0.$$
Therefore,
\begin{equation}\label{eq3.12a}
 \tilde u_{11\tilde\beta}(\tilde x_0)\leq C_{34}\mathcal{ M}.
\end{equation}

On the other hand, differentiating $\tilde h(D\tilde u)$ twice in the direction $e_1$ at $\tilde x_0$, we have
$$\tilde h_{p_k}\tilde u_{k11}+\tilde h_{p_kp_l}\tilde u_{k1}\tilde u_{l1}=0.$$
The concavity of $\tilde h$ yields that
$$\tilde h_{p_k}\tilde u_{k11}=-\tilde h_{p_kp_l}\tilde u_{k1}\tilde u_{l1}\geq \tilde\theta \mathcal{M}^2.$$
Combining it with $\tilde h_{p_k}\tilde u_{k11}=\tilde u_{11\tilde\beta}$ and using (\ref{eq3.12a}), we obtain
$$\tilde\theta  \mathcal{M}^2\leq C_{34}\mathcal{M}.$$
Then we get the upper bound of $\mathcal{M}=\tilde u_{11}(\tilde x_0)$ and thus the desired result follows.
\end{proof}
By Lemma \ref{lem4.4}, Lemma \ref{lem4.5} and (\ref{e4.4.5}), we obtain the $C^2$ a-priori estimate of $\tilde{u}$ on the boundary.
\begin{lemma}\label{lem4.6}
If $\tilde{u}$ is a strictly convex solution of \eqref{e2.17}, then there exists a positive constant $C_{35}$ depending only on $u_0$, $\Omega$, $\tilde{\Omega}$, such that
\begin{equation*}\label{eq3.13a}
\max_{\partial\tilde\Omega}|D^2\tilde u| \leq C_{35}.
\end{equation*}
\end{lemma}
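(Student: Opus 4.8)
The plan is to assemble the pointwise bound on $D^2\tilde u$ along $\partial\tilde\Omega$ out of the two tangential/oblique second–derivative estimates already in hand. Fix $\tilde x\in\partial\tilde\Omega$. Since $\tilde u$ is convex, $D^2\tilde u(\tilde x)$ is a nonnegative symmetric matrix, so
\begin{equation*}
|D^2\tilde u(\tilde x)|\le \mathrm{tr}\,D^2\tilde u(\tilde x)\le n\max_{|\tilde\xi|=1}\tilde u_{\tilde\xi\tilde\xi}(\tilde x).
\end{equation*}
Hence it suffices to bound $\tilde u_{\tilde\xi\tilde\xi}(\tilde x)$ by a constant independent of $\tilde x$ and of the unit vector $\tilde\xi$.

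First I would use the decomposition of an arbitrary unit vector $\tilde\xi$ at $\tilde x$ into its tangential part $\tilde\varsigma(\tilde\xi)$ and its component along $\tilde\beta$, together with the resulting inequality \eqref{e4.4.5}, which—having controlled $\langle\tilde\beta,\tilde\nu\rangle$ from below and $|\tilde\varsigma(\tilde\xi)|$ from above via the strict obliqueness estimate \eqref{e3.3.12} as in \eqref{e3.36}—gives
\begin{equation*}
\tilde u_{\tilde\xi\tilde\xi}\le C_{27}\bigl(\tilde u_{\tilde\varsigma\tilde\varsigma}+\tilde u_{\tilde\beta\tilde\beta}\bigr),
\end{equation*}
where $\tilde\varsigma=\tilde\varsigma(\tilde\xi)/|\tilde\varsigma(\tilde\xi)|$ is a unit tangential direction and $C_{27}$ depends only on $\Omega,\tilde\Omega$ through the obliqueness constant. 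Then I would simply insert the two boundary estimates already proved: Lemma \ref{lem4.4} gives $\tilde u_{\tilde\beta\tilde\beta}\le C_{28}$ on $\partial\tilde\Omega$, and Lemma \ref{lem4.5} gives $\tilde u_{\tilde\varsigma\tilde\varsigma}\le C_{30}$ for every unit tangential $\tilde\varsigma$. Combining, $\tilde u_{\tilde\xi\tilde\xi}(\tilde x)\le C_{27}(C_{28}+C_{30})$, and taking the supremum over $|\tilde\xi|=1$ and over $\tilde x\in\partial\tilde\Omega$ yields $\max_{\partial\tilde\Omega}|D^2\tilde u|\le C_{35}$ with $C_{35}$ depending only on $u_0,\Omega,\tilde\Omega$, as claimed.

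There is no real obstacle here; the substance lies entirely in Lemmas \ref{lem4.4} and \ref{lem4.5} and in the obliqueness bound \eqref{e3.3.12}. The only point deserving a word of care is that the constant $C_{27}$ in \eqref{e4.4.5} be genuinely uniform along the whole of $\partial\tilde\Omega$: this is exactly what \eqref{e3.3.12} provides, since the quantities $\langle\tilde\beta,\tilde\nu\rangle^{-1}$, $|\tilde\beta^T|$, $|\tilde\nu|$ entering \eqref{e3.36}--\eqref{e4.4.5} are then bounded by constants depending only on $\Omega$ and $\tilde\Omega$, while the smoothness and uniform convexity of $\partial\tilde\Omega$ guarantee that the extension of $\tilde\nu$ and the relevant defining functions have bounded derivatives.
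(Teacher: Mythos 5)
Your argument is exactly the paper's: Lemma \ref{lem4.6} is obtained there by combining the direction decomposition \eqref{e4.4.5} (whose constant is uniform thanks to the obliqueness estimate \eqref{e3.3.12}) with the oblique bound of Lemma \ref{lem4.4} and the tangential bound of Lemma \ref{lem4.5}. Your proposal is correct and adds only the harmless observation that, by convexity, bounding $\tilde u_{\tilde\xi\tilde\xi}$ over unit $\tilde\xi$ controls $|D^2\tilde u|$.
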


By Lemma \ref{lem4.3} and Lemma \ref{lem4.6}, we can see that
\begin{lemma}\label{lem4.7}
If $\tilde{u}$ is a strictly convex solution of \eqref{e2.17}, then there exists a positive constant $C_{36}$ depending only on $u_0$, $\Omega$, $\tilde{\Omega}$, such that
\begin{equation*}\label{eq3.14a}
\max_{\bar{\tilde\Omega}}|D^2\tilde u| \leq C_{36}.
\end{equation*}
\end{lemma}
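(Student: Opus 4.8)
The plan is to glue together the two ingredients already in hand: the interior-to-boundary reduction of Lemma~\ref{lem4.3} and the boundary $C^2$ estimate of Lemma~\ref{lem4.6}. First I would invoke Lemma~\ref{lem4.6}: since $\tilde u$ is a strictly convex solution of \eqref{e2.17},
\begin{equation*}
\max_{\partial\tilde\Omega}|D^2\tilde u|\leq C_{35},
\end{equation*}
with $C_{35}$ depending only on $u_0,\Omega,\tilde\Omega$. Then I would apply Lemma~\ref{lem4.3}, whose proof rests on the auxiliary operator $\tilde{\mathcal L}=\tilde a_{pq}\partial^2_{pq}$, the differentiated identity $\tilde{\mathcal L}\tilde u_{kk}\geq -C_{20}$, and a comparison against the linear barrier $w$; it gives
\begin{equation*}
\sup_{\tilde\Omega}|D^2\tilde u|\leq \max_{\partial\tilde\Omega}|D^2\tilde u|+C_{17}.
\end{equation*}
(The statement of Lemma~\ref{lem4.3} should be read with $\tilde\Omega$ in place of $\Omega$, since its proof uses neighborhoods $\tilde\Omega_\rho=\tilde\Omega\cap B_\rho(y_0)$ of points of $\tilde\Omega$.) Combining the two displays yields $\max_{\bar{\tilde\Omega}}|D^2\tilde u|\leq C_{35}+C_{17}$, so $C_{36}:=C_{35}+C_{17}$ works.

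Since $\tilde u$ is convex the eigenvalues of $D^2\tilde u$ are nonnegative, so $|D^2\tilde u|$ is comparable to its trace $\tilde u_{11}+\cdots+\tilde u_{nn}$, and the bound above is equivalent to a uniform upper bound on that trace --- precisely the quantity handled in Lemmas~\ref{lem4.2}--\ref{lem4.6}. Moreover, via the Legendre relation $[D^2\tilde u](Du(x))=[D^2u(x)]^{-1}$, the estimate $D^2\tilde u\leq C_{36}I_n$ is the same as the positive lower bound $D^2u\geq C_{36}^{-1}I_n$; together with the upper bound $D^2u\leq C_{14}I_n$ of Lemma~\ref{lem4.1} this furnishes the two-sided, uniformly elliptic control of $D^2u$ that the continuity method in the later sections requires.

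There is no genuinely new obstacle at this step: all the analysis --- the strict obliqueness estimate of Lemma~\ref{llll3.4}, the structural conditions of Corollary~\ref{c3.4}, the barrier constructions of Lemmas~\ref{lem4.4}--\ref{lem4.5}, and the interior reduction of Lemma~\ref{lem4.3} --- has already been carried out. The only point deserving attention is the bookkeeping of dependencies: one must check that each of $C_{17},C_{28},C_{30},C_{35}$ depends only on the admissible data ($u_0$, $\Omega$, $\tilde\Omega$, $n$, $\operatorname{diam}\Omega$) and not, say, on an assumed modulus of convexity of $\tilde u$, so that the final constant $C_{36}$ inherits the same dependence.
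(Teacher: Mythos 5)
Your proposal matches the paper exactly: the paper gives no separate argument for Lemma~\ref{lem4.7}, stating only that it follows by combining the interior-to-boundary reduction of Lemma~\ref{lem4.3} with the boundary estimate of Lemma~\ref{lem4.6}, which is precisely your $C_{36}=C_{35}+C_{17}$. Your remark that the statement of Lemma~\ref{lem4.3} should be read with $\tilde\Omega$ in place of $\Omega$ is a correct reading of a notational slip in the paper, and your bookkeeping of constant dependencies is consistent with the paper's.
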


By Lemma \ref{lem4.1} and Lemma \ref{lem4.7}, we conclude that
\begin{lemma}\label{lem3.6}
Assume that $\Omega$, $\tilde{\Omega}$ are bounded, uniformly convex domains with smooth boundary in $\mathbb{R}^{n}$ and $\tilde{\Omega}\subset\subset B_{1}(0)$. If $u$ is a strictly convex solution to \eqref{e1.1.3}-\eqref{e1.1.4}, then there exists a positive constant $C_{37}$ depending only on $\Omega$, $\tilde{\Omega}$, such that
\begin{equation*}\label{eq4.4.8}
\frac{1}{C_{37}}I_n\leq D^2 u(x) \leq C_{37} I_n,\; x\in\bar\Omega,
\end{equation*}
where $I_n$ is the $n\times n$ identity matrix.
\end{lemma}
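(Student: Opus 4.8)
The plan is to obtain the two-sided Hessian bound by pairing the one-sided bound already established for $u$ with the one-sided bound established for its Legendre transform $\tilde u$, via the duality $D^2\tilde u(Du(x)) = [D^2 u(x)]^{-1}$. In this sense the lemma is merely a synthesis of Lemma \ref{lem4.1} and Lemma \ref{lem4.7}, combined with the elementary fact that bounding a positive definite symmetric matrix from above is equivalent to bounding its inverse from below.

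Concretely I would argue as follows. First, Lemma \ref{lem4.1} already furnishes the upper bound $D^2 u(x)\leq C_{14}I_n$ on $\bar\Omega$, with $C_{14}$ controlled by the known data. Second, because $u$ is uniformly convex (again by Lemma \ref{lem4.1}) and the strict obliqueness estimate of Lemma \ref{llll3.4} holds, the gradient map $Du:\bar\Omega\to\bar{\tilde\Omega}$ is a diffeomorphism of the closed domains; hence the Legendre transform $\tilde u$ is well defined on $\bar{\tilde\Omega}$ and, as recorded before \eqref{e2.17}, $\partial^2_{y_iy_j}\tilde u(y) = u^{ij}(x)$ with $y=Du(x)$, i.e.\ $D^2\tilde u(Du(x)) = [D^2 u(x)]^{-1}$. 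Third, Lemma \ref{lem4.7} gives $|D^2\tilde u|\leq C_{36}$ on $\bar{\tilde\Omega}$, so every eigenvalue of $[D^2 u(x)]^{-1}$ is at most $C_{36}$, which forces every eigenvalue of $D^2 u(x)$ to be at least $1/C_{36}$, i.e.\ $D^2 u(x)\geq \frac{1}{C_{36}}I_n$ on $\bar\Omega$. Taking $C_{37}=\max\{C_{14},C_{36}\}$ then yields $\frac{1}{C_{37}}I_n\leq D^2 u(x)\leq C_{37}I_n$ on $\bar\Omega$, with $C_{37}$ depending only on $\Omega$ and $\tilde\Omega$.

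The one point that actually needs the machinery of the previous sections is that the Legendre correspondence $Du$ is a genuine bijection of the closures $\bar\Omega$ and $\bar{\tilde\Omega}$, so that the identity $D^2\tilde u\circ Du=(D^2 u)^{-1}$ is valid up to and including the boundary and the bound from Lemma \ref{lem4.7} really transfers to every point of $\bar\Omega$; this is exactly what the uniform obliqueness estimate (Lemma \ref{llll3.4}) together with the boundary and interior $C^2$ estimates are for. Beyond this, no further PDE work is needed — the rest is the spectral bookkeeping above. Alternatively, one can avoid invoking the diffeomorphism directly: the operator $\tilde G$ of \eqref{e2.17} satisfies the structural conditions of Corollary \ref{c3.4}, and $\tilde u$ solves a problem of exactly the same type as $u$, so applying the upper-bound argument of Lemma \ref{lem4.1} to $\tilde u$ gives $D^2\tilde u\leq CI_n$ and hence $D^2 u=(D^2\tilde u)^{-1}\geq C^{-1}I_n$.
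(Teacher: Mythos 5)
Your proposal is correct and matches the paper's own argument: the paper deduces this lemma directly from Lemma \ref{lem4.1} (upper bound on $D^2u$) and Lemma \ref{lem4.7} (bound on $D^2\tilde u$), using the Legendre relation $D^2\tilde u = [D^2u]^{-1}$ exactly as you spell out. Your write-up simply makes explicit the spectral bookkeeping and the boundary-to-boundary diffeomorphism point that the paper leaves implicit.
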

\vspace{3mm}
\section{The invertibility of linearized operators}
In this section, we prove that all solutions of (\ref{e1.1.3}) and (\ref{e1.1.4}) are non-degenerate. We fix two strictly convex domains $\Omega,\tilde{\Omega}$ in $\mathbb{R}^n$. We also fix two points $\overline{p}\in \partial \Omega$ and $\overline{q}\in \partial \tilde{\Omega}$. Suppose that $f:\Omega \to\tilde{\Omega} $ is onto and one-to-one. We claim that linearized operator at $f$ is invertible.

In order to prove this, we fix a real number $\alpha\in(0,1)$. We denote the Banach space by
\begin{align*}
    \mathcal{X}=\{u\in C^{2,\alpha}(\overline{\Omega}):\int_{\Omega}u=0\}
\end{align*}
and
\begin{align*}
    \mathcal{Y}=C^{\alpha}(\overline{\Omega})\times C^{1,\alpha}(\partial \Omega).
\end{align*}
We define a map $\mathcal{F}:\mathcal{X}\times\mathbb{R}\to\mathcal{Y}$ by
\begin{align*}
    \mathcal{F}(u,c)=(F(Du,D^2u)-c,(\tilde{h}\circ(\nabla u)|_{\partial \Omega}).
\end{align*}
We consider the following problem:
\begin{problem}\label{prob1}
Find a convex function $u: \Omega\to \mathbb{R}$ and a bounded constant $c$ such that $\nabla u$ is a diffeomorphism from $\Omega$ to $\tilde{\Omega}$ and $F(Du(x),D^2u(x))=c$ for all $x\in \Omega$.
\end{problem}
Therefore, if $(u,c)\in \mathcal{X}\times \mathbb{R}$ is a solution of Problem \ref{prob1}, and then $\mathcal{F}(u,c)=(0,0)$.\\
We next define a linearized operator $D\mathcal{F}:\mathcal{X}\times \mathbb{R}\to \mathcal{Y}$ by
\begin{align*}
    D\mathcal{F}(w,a)=(\mathcal{L}w-a,Nw).
\end{align*}
The operator $\mathcal{L}:C^{2,\alpha}(\overline{\Omega})\to C^{\alpha}(\overline{\Omega})$ is define by
\begin{align*}
    \mathcal{L}w(x)=G_{ij}(Du,D^2u)\partial_{ij}w-G_{p_i}(Du,D^2u)\partial_iw,
\end{align*}
for $x\in \Omega$. Moreover, the operator $N:C^{2,\alpha}(\overline\Omega)\to C^{1,\alpha}(\partial \Omega)$ is defined by
\begin{align*}
    Nw(x)=\langle \nabla w(x),\nabla \tilde{h}(\nabla u(x))\rangle
\end{align*}
for $x\in\partial \Omega$.
Obviously, $\mathcal{L}$ is an elliptic operator, the boundary condition is oblique.
\begin{proposition}\label{e4.4.1}
    The linearized operator $D\mathcal{F}:\mathcal{X}\times \mathbb{R}\to \mathcal{Y}$ is invertible.
\end{proposition}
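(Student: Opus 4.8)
The plan is to prove invertibility of $D\mathcal{F}$ by showing it is a Fredholm operator of index zero whose kernel is trivial. First I would verify that $D\mathcal{F}:\mathcal{X}\times\mathbb{R}\to\mathcal{Y}$ is bounded and Fredholm of index $0$. The operator $(w,a)\mapsto(\mathcal{L}w,Nw)$ is a linear elliptic operator with an oblique boundary condition: by Corollary~\ref{c3.3} the coefficient matrix $[G_{ij}]$ is uniformly elliptic (its eigenvalues lie between $\Lambda_6$ and $\Lambda_7$), the lower-order coefficients $G_{p_i}$ are bounded by the discussion following \eqref{e2.2.14}, and by Lemma~\ref{lem3.5} together with the strict obliqueness estimate \eqref{e3.3.12} in Lemma~\ref{llll3.4} the boundary operator $N$ is strictly oblique, $\langle\nabla\tilde h(\nabla u),\nu\rangle=\langle\beta,\nu\rangle\geq 1/C_1>0$. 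Hence by the standard Schauder theory for oblique derivative problems (Gilbarg--Trudinger, Ch.~6), the map $C^{2,\alpha}(\overline\Omega)\to C^\alpha(\overline\Omega)\times C^{1,\alpha}(\partial\Omega)$, $w\mapsto(\mathcal{L}w,Nw)$, has closed range and finite-dimensional kernel and cokernel; adding the one-dimensional parameter $a$ and restricting to the codimension-one subspace $\mathcal{X}$ keeps the index at $0$.

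Next I would compute the kernel. Suppose $(w,a)\in\mathcal{X}\times\mathbb{R}$ satisfies $\mathcal{L}w=a$ in $\Omega$ and $Nw=0$ on $\partial\Omega$. The key point is that $\mathcal{L}$ has no zeroth-order term, so the strong maximum principle applies: at an interior maximum of $w$ we would need $a\le 0$, and at an interior minimum $a\ge 0$ — but one must be careful because the extremum could be on the boundary. Here the obliqueness of $N$ enters via the Hopf boundary point lemma: if $w$ attains its maximum over $\overline\Omega$ at a boundary point $x_0$ and $w$ is nonconstant, then the oblique derivative $Nw(x_0)=\langle\nabla w(x_0),\beta\rangle$ would be strictly positive (since $\beta$ points into $\Omega$ with $\langle\beta,\nu\rangle>0$), contradicting $Nw=0$. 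Therefore the maximum of $w$ is attained in the interior, forcing $a\le 0$; applying the same argument to $-w$ forces $a\ge 0$, so $a=0$ and $\mathcal{L}w=0$. Then the maximum principle plus the Hopf lemma again show $w$ must be constant, and the normalization $\int_\Omega w=0$ in $\mathcal{X}$ gives $w\equiv 0$. Hence $\ker D\mathcal{F}=\{0\}$.

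Finally, an injective Fredholm operator of index $0$ is surjective, so $D\mathcal{F}$ is bijective; by the open mapping theorem its inverse is bounded, and invertibility follows. I expect the main technical obstacle to be the Fredholm/index-zero claim: one must confirm that the oblique derivative problem with these particular (merely $C^\alpha$, not smoother) coefficients fits the hypotheses of the linear Schauder theory and that the relevant a~priori estimate $\|w\|_{C^{2,\alpha}}\le C(\|\mathcal{L}w\|_{C^\alpha}+\|Nw\|_{C^{1,\alpha}}+\|w\|_{C^0})$ holds with a constant controlled by the ellipticity bounds of Corollary~\ref{c3.3} and the obliqueness bound $C_1$ of Lemma~\ref{llll3.4}; the compactness of $C^{2,\alpha}(\overline\Omega)\hookrightarrow C^2(\overline\Omega)$ then upgrades this to the Fredholm alternative. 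The kernel computation itself is routine once the Hopf lemma is invoked correctly.
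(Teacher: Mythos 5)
Your kernel argument is essentially the paper's: assume $\mathcal{L}w=a$ in $\Omega$ and $Nw=0$ on $\partial\Omega$, rule out $a>0$ and $a<0$ via the strong maximum principle combined with the Hopf boundary point lemma at a boundary extremum (which is where the strict obliqueness \eqref{e3.3.12} enters), then conclude $a=0$, $w$ constant, and $w\equiv 0$ by the zero-mean normalization in $\mathcal{X}$. Where you genuinely differ is surjectivity: the paper dismisses it with the single sentence ``a similar argument shows that $D\mathcal{F}$ is onto,'' whereas you derive it from injectivity by showing $D\mathcal{F}$ is Fredholm of index zero, using linear Schauder theory for uniformly elliptic oblique-derivative problems (the coefficients $G_{ij}(Du,D^2u)$, $G_{p_i}(Du,D^2u)$ are $C^\alpha$ with ellipticity and obliqueness constants controlled by Corollary \ref{c3.3} and Lemma \ref{llll3.4}), together with the index bookkeeping for adjoining the parameter $a$ and restricting to the codimension-one subspace $\mathcal{X}$. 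This buys a genuinely complete proof of ``onto,'' which is the standard way (as in Brendle--Warren) to make that step rigorous, at the cost of invoking the Fredholm machinery for oblique problems. Two small caveats in your kernel step: (i) the Hopf lemma needs $\mathcal{L}w\geq 0$, so you should fix the sign first --- argue by contradiction assuming $a>0$, so that $w$ is a strict subsolution and cannot have an interior maximum, and only then apply Hopf at the boundary maximum --- rather than first locating the maximum in the interior for general $a$; (ii) at a boundary maximum the derivative of $w$ in the inward oblique direction $\beta$ is strictly negative, not positive; this sign slip is harmless since the contradiction only requires $Nw(x_0)\neq 0$.
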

\begin{proof}
    We claim that the operator $D\mathcal{F}$ is one-to-one. To prove this, suppose that $w$ is a real-valued function such that $\mathcal{L}w-a=0$ in $\Omega$ and $Nw(x)=0$. This implies that $\mathcal{L}w=a$ for all $x\in \Omega$. If the constant $a$ is positive, then $\mathcal{L}w\leq 0$, $w$ is strictly negative in the interior of $\Omega$ by the maximum principle. Hence, the Hopf boundary point lemma (cf. \cite{GT}, Lemma 3.4)
implies that the outer normal derivative of $w$  is strictly positive.
This contradicts the fact that $Nw(x)=0$.

Thus, we conclude that
$a\leq 0$. An analogous argument shows that $a\geq 0$. Consequently, we
must have $a=0$. Using the maximum principle, we deduce that $w=0$.
Thus, the operator $D\mathcal{F}:\mathcal{X}\times \mathbb{R}\to \mathcal{Y}$ is one-to-one.

A similar argument shows that $D\mathcal{F}:\mathcal{X}\times \mathbb{R}\to \mathcal{Y}$ is onto. This
completes the proof.
\end{proof}
\vspace{3mm}
\section{Existence and uniqueness of solutions}
In this section, we prove the existence of a solution to (\ref{e1.1.3}) and (\ref{e1.1.4}) by the continuity method
and some tricks which we learn from Brendle and Warren's work \cite{SM}.
The proof of uniqueness up to a constant, we refer the readers to Lemma 5.1 in Huang-Li \cite{RS}.

Let $\Omega$ and $\tilde{\Omega}$ be uniformly convex domains in $\mathbb{R}^n$ with smooth boundary.
To prove the existence of the solution, we first show that Theorem \ref{t1.1} holds
 when $\Omega$ and $\tilde{\Omega}$ are two balls in $ R^n $.
 Then we deform the balls to the given strictly convex domains and prove the existence of general
 case of Theorem \ref{t1.1}.
\begin{lemma}\label{lem6.1}
 Let $B(0,R_0)$ and $B(0,t_0)$$(t_0<1)$ are bounded, uniformly convex domains with smooth boundary in $\mathbb{R}^{n}$,
then the following problem
    \begin{equation}
\begin{cases}\label{e6.6.1}
    \mathrm{div}\left(\frac{Du}{\sqrt{1-|Du|^2}}\right)=c,\quad x\in B(0,R_0),\\
Du(B(0,R_0))=B(0,t_0),
\end{cases}
\end{equation}
has a radially symmetric solution $u=u(r)$ where $r=|x|=\sqrt{x_1^2+x_2^2+\cdots+x_n^2}$
    and $Du$  is  a diffeomorphism from $B(0,R_0)$ to $B(0,t_0)$.
\end{lemma}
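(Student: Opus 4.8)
The plan is to reduce the PDE to an ODE by the symmetry ansatz $u=u(r)$, solve that ODE explicitly (or at least extract the key algebraic relation between the constant $c$ and the radial derivative $u'(r)$), and then verify that the resulting solution is convex, spacelike, and has gradient image exactly $B(0,t_0)$.

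First I would substitute $u=u(r)$ into $\mathrm{div}\!\left(Du/\sqrt{1-|Du|^2}\right)=c$. Writing $\phi(r)=u'(r)$, so that $Du=\phi(r)\,x/r$ and $|Du|=|\phi(r)|$, the divergence operator in $\mathbb{R}^n$ applied to a radial vector field gives
\begin{equation*}
\frac{1}{r^{n-1}}\left(r^{n-1}\,\frac{\phi}{\sqrt{1-\phi^2}}\right)' = c.
\end{equation*}
Integrating from $0$ to $r$ and using $\phi(0)=0$ (smoothness at the origin), one gets
\begin{equation*}
\frac{\phi(r)}{\sqrt{1-\phi(r)^2}} = \frac{c\,r}{n},
\end{equation*}
which can be solved algebraically for $\phi$:
\begin{equation*}
\phi(r) = u'(r) = \frac{c\,r/n}{\sqrt{1+(c\,r/n)^2}}.
\end{equation*}
This is manifestly smooth, odd in $r$, satisfies $|\phi(r)|<1$ for all $r$ (the spacelike condition), and $\phi'(r)>0$, which together with $\phi(r)/r>0$ gives that $D^2u$ is positive definite, i.e.\ $u$ is uniformly convex on $B(0,R_0)$. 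One then recovers $u$ by integrating $\phi$, and the additive constant is irrelevant.

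Next I would pin down $c$ from the second boundary condition. Since $Du$ maps the radius-$r$ sphere to the sphere of radius $\phi(r)$, the image $Du(B(0,R_0))$ is exactly the ball $B(0,\phi(R_0))$. Imposing $\phi(R_0)=t_0$ forces
\begin{equation*}
\frac{c\,R_0/n}{\sqrt{1+(c\,R_0/n)^2}} = t_0 \quad\Longrightarrow\quad c = \frac{n\,t_0}{R_0\sqrt{1-t_0^2}},
\end{equation*}
which is a well-defined positive constant precisely because $t_0<1$. With this choice of $c$, the map $Du$ sends each sphere $\{|x|=r\}$ diffeomorphically onto $\{|y|=\phi(r)\}$ with $\phi$ strictly increasing and $\phi(0)=0$, $\phi(R_0)=t_0$; hence $Du:B(0,R_0)\to B(0,t_0)$ is a diffeomorphism. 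This yields both the radially symmetric solution and the diffeomorphism claim.

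The computation is essentially routine, so there is no serious obstacle; the only point requiring a little care is the behaviour at the origin, where one must check that the explicit formula for $u$ (obtained by integrating $\phi$) is genuinely $C^\infty$ across $r=0$ — this follows because $\phi$ is a smooth odd function of $r$ with $\phi(0)=0$, so $u$ is a smooth even function of $x$, and because $\phi(r)/r$ extends smoothly to $r=0$ with value $c/n>0$, the Hessian $D^2u(0)=(c/n)I_n$ is positive definite. One should also note that the constant $c$ determined here agrees with the formula for $c$ forced by integrating the equation over $B(0,R_0)$ and applying the divergence theorem, as in Lemma~\ref{L2.1}, which provides a consistency check. Everything else — smoothness, convexity, the spacelike bound, and the gradient image — is immediate from the explicit formula for $\phi$.
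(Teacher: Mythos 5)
Your proposal is correct and follows essentially the same route as the paper: reduce to the radial ODE for $p(r)=u'(r)/\sqrt{1-|u'(r)|^2}$, conclude $p(r)=\tfrac{c}{n}r$, solve for $u'$, and fix $c=\tfrac{n t_0}{R_0\sqrt{1-t_0^2}}$ from $u'(R_0)=t_0$. The only (harmless) difference is that you integrate the ODE directly in the form $\bigl(r^{n-1}p\bigr)'=c\,r^{n-1}$, while the paper solves it by a power-series comparison; your added checks of convexity, smoothness at the origin, and the diffeomorphism property are consistent with the paper's explicit solution $u(r)=u(0)+\bigl(\sqrt{n^2+c^2r^2}-n\bigr)/c$.
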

\begin{proof}
It is easy to calculate that
\begin{align*}
     \frac{\partial r}{\partial x_k}=\frac{x_k}{r},\quad D_k u=u'(r)\frac{x_k}{r},
\end{align*}
and then
\begin{align*}
    \mathrm{div}\left(\frac{Du}{\sqrt{1-|Du|^2}}\right)&=\mathrm{div}\left(\frac{u'(r)}{\sqrt{1-|u'(r)|^2}}\frac{x}{r}\right)
    \\&=\sum_{k=1}^n\left[\left(\frac{u'(r)}{\sqrt{1-|u'(r)|^2}}\right)'\frac{x^2_k}{r^2}+\left(\frac{u'(r)}{\sqrt{1-|u'(r)|^2}}\frac{1}{r}\right)-\frac{u'(r)}{\sqrt{1-|u'(r)|^2}}\frac{{x_k}^2}{r^3}\right]
    \\&=\left(\frac{u'(r)}{\sqrt{1-|u'(r)|^2}}\right)'+\frac{u'(r)}{\sqrt{1-|u'(r)|^2}}\frac{n}{r}-\frac{1}{r}\frac{u'(r)}{\sqrt{1-|u'(r)|^2}}.
\end{align*}
Let $p(r)=\frac{u'(r)}{\sqrt{1-|u'(r)|^2}}$, then we can rewrite the equation \eqref{e6.6.1} as

\begin{equation}\label{e6.6.2}
    p'(r)+\frac{n-1}{r}p(r)=c,
\end{equation}
where $p(0)=u'(0)=0$.

We expand $p(r)$ according to Taylor's formula
\begin{align*}
p(r)=\sum_{k=1}^{+\infty}a_{k}r^k.
\end{align*}

We take the first order derivative of the above equation to get
\begin{align*}
p'(r)=k\sum_{k=1}^{+\infty}a_{k}r^{k-1}.
\end{align*}
Combining with ($\ref{e6.6.2}$), we obtain
\begin{align*}
rp'(r)=k\sum_{k=1}^{+\infty}a_kr^k,
\end{align*}
and then
\begin{equation}\label{e6.6.3}
k\sum_{k=1}^{+\infty}a_kr^k+(n-1)\sum_{k=1}^{+\infty}a_kr^k=r c.
\end{equation}
We compare the coefficients of the terms corresponding to ($\ref{e6.6.3}$)
\begin{align*}
    \begin{cases}
    a_1+(n-1)a_1=c,\;\;k=1,\\
    ka_k+(n-1)a_k=0,\;\; k\geq 2.
\end{cases}
\end{align*}
By calculation, we obtain
\begin{equation}
    \begin{cases}
    a_1=\frac{c}{n},\;\;k=1,\\
    a_k=0,\;\;k\geq 2,
\end{cases}
\end{equation}
and then
\begin{align*}
    p(r)=a_1r=\frac{c}{n}r,\quad
    p'(r)=a_1=\frac{c}{n}.
\end{align*}
Now, we need to solve $u'(r)$,
we know
\begin{equation}
    \frac{u'(r)}{\sqrt{1-|u'(r)|^2}}=\frac{c}{n}r,
\end{equation}
and then
\begin{align*}
    u'(r)=\left(\frac{c^2r^2}{n^2+c^2r^2}\right)^{\frac{1}{2}}.
\end{align*}
The initial value condition is $u'(R_0)=t_0$, so
\begin{align*}
    u'(R_0)=\left(\frac{c^2{R_0}^2}{n^2+c^2{R_0}^2}\right)^{\frac{1}{2}}=t_0,
\end{align*}
and then
\begin{align*}
    c = \frac{n t_0}{\sqrt{1-t_0^2} R_0},
\end{align*}
We can obtain
\begin{equation}\label{e6.6.6}
    u'(r)=\left(\frac{c^2r^2}{n^2+c^2r^2}\right)^{\frac{1}{2}}.
\end{equation}
Integrating from 0 to $r$ for ($\ref{e6.6.6}$), we get
\begin{align*}
    \int_0^ru'(r)dr=\int_0^r\frac{cr}{\sqrt{n^2+c^2r^2}}dr.
\end{align*}
Therefore, we get
\begin{equation}
u(r)=u(0)+\frac{\sqrt{(n^2+c^2r^2)}-n}{c}.
\end{equation}
\end{proof}

By utilizing Lemma \ref{lem6.1}, we can prove the main theorem.

\noindent\textbf{ Proof of Theorem \ref{t1.1}.}

 We know that the existence of solutions to (\ref{e1.1.3}) and (\ref{e1.1.4}) is equivalent to
 the existence of solutions to the following equations which can be written as (\ref{e3.3.4}) in Section 3
\begin{align*}
\begin{cases}
      G(Du,D^2u)=c ,\quad x\in\Omega,\\
  \qquad\;  Du(\Omega)=\tilde{\Omega}.
\end{cases}
\end{align*}

 Let  $\tilde{h}$ and $h$ be the boundary defining functions of $\Omega$ and $\tilde{\Omega}$ constructed in Section 3.
By proposition A.1 in \cite{SM}, we may assume the defining function $\tilde{h}$ and $h$ satisfy the following properties:

(1) $\tilde{h}$ and $h$  are uniformly convex;

(2) $\tilde{h}(x)=0$ for all $x\in \partial \Omega$ and $h(x)=0$ for all $x\in \partial \tilde{\Omega}$;

(3) The sub-level sets of $ \{ x\in \tilde{\Omega}: h(x)\leq t \}$ are balls
when $t$ is sufficiently close to $\inf_{\tilde{\Omega}} h$
and the sub-level sets of $ \{ x\in \Omega: \tilde{h}(x) \leq s \}$ are balls
when $s$ is sufficiently close to $\inf_{\Omega} \tilde{h}$.
 By dividing some positive constants, we may assume that $\inf_{\Omega} \tilde{h} = \inf_{\tilde{\Omega}} h = -1$.

For each $t\in(0,1]$, we consider the sub-level sets of $\tilde{h}$ and $h$:
\begin{equation*}
    \Omega_t :=\{p\in\Omega:\tilde{h}(p)\leq t-1 \}
\end{equation*}
and
\begin{equation*}
    \tilde\Omega_t :=\{q\in\tilde{\Omega}:h(q)\leq t-1\}.
\end{equation*}
Since $\tilde{h}$ and $h$ are uniformly convex, we can see that the sub-level sets $\Omega_t$ and $\tilde{\Omega}_t$ are all uniformly convex domains with smooth boundary.

For each $t\in(0,1]$, we consider the following problem:
\begin{problem}\label{prob2}Find a convex function $u_t: \Omega_t\to \mathbb{R}$ and a bounded function $c(t)$ such that $\nabla u_t$
is a diffeomorphism from $\Omega_t$ to $\tilde{\Omega}_t$ and $G(Du_t(x),D^2u_t(x))=c(t)$ for all $x\in \Omega_t$.
\end{problem}

If $t\in (0,1]$ is sufficiently small, then the sub-level sets $\Omega_t$ and $\tilde{\Omega}_t$ are balls in $\Omega$ and $\tilde{\Omega}$ respectively.
By using Lemma \ref{lem6.1}, we know that
the Problem \ref{prob2} is solvable if $t\in (0,1]$ is sufficiently small.

We define the set
\begin{align*}
    I=\{t\in(0,1]:  \text{Problem \ref{prob2} has at least one solution}\}.
\end{align*}
Therefore the set $I$ is a non-empty.
We claim that $I=(0,1]$, which is equivalent to prove that the set $I$ is not only open, but also closed.
By the invertibility of the
linearized operator in Proposition \ref{e4.4.1} and Theorem 17.6 in \cite{GT}, we know that the set $I$  is an open subset of $(0,1]$.
We next use the a-priori estimates in Section 3 and section 4 to prove that $I$
is a closed subset of $(0, 1]$.
It is equivalent to the fact that for any monotone increasing  sequence $\{t_k \}\subset I$, if $\lim_{k\rightarrow \infty}t_k = t_0$, then $t_0 \in I$.

For each $t_k$, we denote $(u_k,c(t_k))$ solving Problem \ref{prob2}
\begin{equation*}
\begin{cases}
      G(Du_k,D^2u_k)=c(t_k) ,\quad x\in\Omega_{t_k},\\
\qquad\;\;Du(\Omega_{t_k})=\tilde{\Omega}_{t_k}.
\end{cases}
\end{equation*}
Combining Lemma \ref{lem3.6} with Evans-Krylov theory, we can prove that
\begin{equation*}
  \| u_k \|_{C^{2,\alpha}(\bar{\Omega}_{t_k})} \leq C_{38},
\end{equation*}
where $C_{38}$ is independent of $k$.
Since by using Lemma \ref{lem3.6} again, we have $$|c(t_k)|=|G(Du_{t_k},D^2u_{t_k})| \leq C_{38},$$
and hence by using Arzela-Ascoli Theorem, we know that there exists $\tilde{u} \in C^{2,\alpha}(\Omega_{t_0})$,
$\tilde{c} \in \R$ and a subsequence of $\{t_k\}$,
which is still denoted by $\{t_k\}$, such that by letting $k \rightarrow \infty$ to obtain
\begin{equation*}
\begin{cases}
     \| u_k-\tilde{u}\|_{C^2(\Omega_{t_0})}\rightarrow 0,\\
   c(t_k)\rightarrow \tilde{c}.
\end{cases}
\end{equation*}
Lemma \ref{lem3.6} ensure that if $Du_{t_k}$ is a diffeomorphism from $\Omega_{t_k}$ to $\tilde{\Omega}_{t_k}$,
then $D\tilde{u}$ is a diffeomorphism from $\Omega_{t_0}$ to $\tilde{\Omega}_{t_0}$.
Letting $k\rightarrow \infty$, we deduce that
\begin{equation*}
\begin{cases}
      G(D\tilde{u},D^2\tilde{u})=\tilde{c} ,\quad x\in\Omega_{t_0},\\
   \qquad\;\; D\tilde{u}(\Omega_{t_0})=\tilde{\Omega}_{t_0}.
\end{cases}
\end{equation*}
Therefore $t_0 \in I$ and thus $I$ is closed.
Consequently, $I=(0,1]$ and
we complete the proof of Theorem \ref{t1.1}.

{\bf Acknowledgments:} The authors would like to thank  referees for useful
comments, which improve the paper.

\vspace{5mm}

\end{document}